\documentclass[a4paper]{article}

\usepackage[english]{babel}

\usepackage[utf8]{inputenc}
\usepackage[utf8]{inputenc}
\usepackage{amsmath}
\usepackage{graphicx}
\usepackage{amssymb}
\usepackage{amsthm}
\usepackage{tikz-cd}
\usepackage{mathrsfs}
\usepackage[colorinlistoftodos]{todonotes}
\usepackage{enumitem}
\usepackage{yfonts}
\usepackage{ dsfont }
\usepackage{MnSymbol}
\usepackage{slashed}

\title{Valuative independence and metric SYZ conjecture}

\author{Yang Li}

\date{\today}
\newtheorem{thm}{Theorem}[section]
\newtheorem{lem}[thm]{Lemma}

\theoremstyle{definition}

\newtheorem{conj}[thm]{Conjecture}
\newtheorem{cor}[thm]{Corollary}

\newtheorem{rmk}[thm]{Remark}
\newtheorem{prop}[thm]{Proposition}
\newtheorem{Def}[thm]{Definition}

\newtheorem*{Notation}{Notation}

\newtheorem*{Acknowledgement}{Acknowledgement}

\newcommand{\cf}{\emph{cf.} }

\newcommand{\R}{\mathbb{R}}
\newcommand{\C}{\mathbb{C}}
\newcommand{\Z}{\mathbb{Z}}
\newcommand{\N}{\mathbb{N}}
\newcommand{\Q}{\mathbb{Q}}

\newcommand{\norm}[1]{\left\lVert#1\right\rVert}

\def\XXint#1#2#3{{\setbox0=\hbox{$#1{#2#3}{\int}$ }
		\vcenter{\hbox{$#2#3$ }}\kern-.6\wd0}}

\begin{document}
	\maketitle

	\begin{abstract}
		Given a polarised maximal degeneration of compact Calabi-Yau manifolds, assuming there exists  a canonical basis of the section ring for the polarisation line bundle, satisfying the valuative independence condition, we will prove the metric SYZ conjecture.
	\end{abstract}

	\section{Introduction}

		Let $\pi: X\to \mathbb{D}^*$ be a \emph{meromorphic degeneration} of compact Calabi-Yau (CY) manifolds over the punctured disc with coordinate $t$, equipped with a relatively ample \emph{polarisation} line bundle $L\to X$.   We assume there is a nowhere vanishing holomorphic volume form $\Omega\in H^0(X, K_X)$, which induces the fibrewise holomorphic volume form $\Omega_t$ by $\Omega=\Omega_t\wedge dt$. The complexity of the degeneration family  $\pi: X\to \mathbb{D}^*$ is measured by a birational invariant called the \emph{essential skeleton} $Sk(X)$, which is a simplicial complex of dimension $0\leq \dim Sk(X)\leq n$. The case of $\dim Sk(X)=n$ is known as  \emph{maximal degeneration}.

	We consider the CY metric $\omega_{CY,t}$ on $X_t$ in the K\"ahler class $\frac{1}{|\log |t||} c_1(L)$, defined by the complex Monge-Amp\`ere equation
	\[
	\omega_{CY,t}^n= \frac{ (L^n) }{ |\log |t||^n  } \mu_t,
	\]
	where $\mu_t$ is the normalised CY probability measure
	\[
	\mu_t=  \frac{  \Omega_t\wedge \overline{\Omega}_t}{    \int_{X_t}  \Omega_t\wedge \overline{\Omega}_t    }.
	\]
	A central problem in CY geometry is the Strominger-Yau-Zaslow (SYZ) conjecture, which concerns the limiting behaviour of the CY metric  $\omega_{CY,t}$ as $t\to 0$. 
	
	\begin{conj}
		(Metric SYZ conjecture) Let $\pi: (X,L)\to \mathbb{D}^*$ be a polarised maximal degeneration of  compact CY manifolds. Given any $0<\delta\ll 1$, for sufficiently small $t$ depending on $\delta$, there exists a special Lagrangian $T^n$-fibration with respect to the Calabi-Yau structure $(X_t, \omega_{CY,t}, \Omega_t)$, on an open subset of $X_t$ whose normalised CY measure is at least $1-\delta$.
	\end{conj}

	The metric SYZ conjecture is currently known for Abelian varieties, certain K3 surfaces, and  a large class of hypersurface examples inside toric Fano manifolds \cite{GrossWilson, LiFermat, LiFano, Hultgren, Hultgren2, PilleSchneider, GotoOdaka}. A more general approach is based on \emph{non-archimedean (NA) pluripotential theory} \cite{Boucksom,Boucksomsurvey, LiNA}. Some key features are as follows (see also Section \ref{sec:NA} for some quick recap):

	\begin{enumerate}
		\item    Given the polarised
		meromorphic degeneration family of compact CY manifolds $\pi: X\to \mathbb{D}^*$, we can take the base change to a degeneration family over the formal disc $X_K\to \text{Spec}(K)$ where $K=\C(\!(t)\!)$, and then canonically
		associate a NA object called the \emph{Berkovich space} $X_K^{an}$.  There is a \emph{hybrid topology} on $X\sqcup X_K^{an}$, so that $X_K^{an}$ can be viewed as a tropical limit of $X_t$ as $t\to 0$.

		\item 	
		
		The essential skeleton $Sk(X)$ is canonically embedded as a subset of $X_K^{an}$.		There is a canonical Lebesgue measure $\mu_0$ supported on $Sk(X) \subset X_K^{an}$, which is the natural weak limit of normalised CY measures $\mu_t$
		on $X_t$ in the hybrid topology. The central result of NA pluripotential theory due to Boucksom-Favre-Jonsson \cite{Boucksom} is the solution of the NA Calabi conjecture. In this setting, it provides a unique (up to constant) continuous psh potential $\phi_0$ on $(X_K^{an}, c_1(L))$, which solves the \emph{non-archimedean Monge-Amp\`ere (NA MA) equation}
		\[
		\text{MA}( \phi_0) = (L^n)\mu_0,
		\]
		and is called the NA CY potential. 
		It is recently proved \cite{Litheta} that whenever $1\leq \dim Sk(X)\leq n$,  up to suitable normalisation, the K\"ahler potential of $(X_t, \omega_t, \Omega_t)$ converges in the $C^0$-hybrid topology  to the NA CY potential $\phi_0$.

		\item  A choice of SNC model $\pi: \mathcal{X}\to \mathbb{D}$ for the degeneration family $\pi: X\to \mathbb{D}^*$, induces a \emph{retraction map} $r_\mathcal{X}$ from $X_K^{an}$ to the dual complex $\Delta_{\mathcal{X}}$. The essential skeleton $Sk(X)$ can be viewed as a subset of $\Delta_{\mathcal{X}}$. 
		
		\begin{Def}\label{Def:weakcomparison}
	An SNC model satisfies the \emph{weak comparison property} for the NA CY potential $\phi_0$, if there is an open subset $U\subset Sk(X)$ with full Lebesgue measure $\mu_0(U)=1$, such that $\phi_0$ factors through the retraction map $r_{\mathcal{X}}: X_K^{an}\to \Delta_{\mathcal{X}}$ over the open subset $U\subset Sk(X)\subset \Delta_{\mathcal{X}}$. 
		\end{Def}

		\begin{rmk}
			The name `comparison property' comes from a result of C. Vilsmeier \cite{Vilsmeier}, which allows one to compare the NA MA equation with the real Monge-Amp\`ere equation when 
			this property is satisfied. The weak comparison property is preserved under smooth blow ups and blow downs, hence if it holds for one SNC model, then it will hold for all SNC models (\cf Remark \ref{rmk:blowup}). As a small caveat, Def. \ref{Def:weakcomparison} is similar but not identical to the comparison property in \cite{LiNA}, and we will give some exposition in Section \ref{sec:comparisonproperty}.
		\end{rmk}
		
				The arguments in \cite{LiNA} essentially proved the following:
		
		\begin{thm} \label{thm:comparisonpropertySYZ}
			Let $\pi: X\to \mathbb{D}^*$ be a polarised maximal degeneration of  compact CY manifolds. 
			Suppose the weak comparison property is verified for some semistable SNC model, then the metric SYZ conjecture holds for the degeneration family $\pi: X\to \mathbb{D}^*$ . 
		\end{thm}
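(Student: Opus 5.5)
The plan is to follow the strategy of \cite{LiNA}: use the weak comparison property to turn the NA CY potential into a real Monge--Amp\`ere solution on the essential skeleton, build from it good approximate Kähler potentials on $X_t$, and then invoke the perturbative existence theorem for special Lagrangian fibrations.

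\textbf{Step 1: reduce to a real Monge--Amp\`ere equation on the skeleton.} Fix the semistable SNC model $\mathcal{X}$ for which the weak comparison property holds. Since the degeneration is maximal, $Sk(X)$ is $n$-dimensional. Over the open set $U\subset Sk(X)$ with $\mu_0(U)=1$, write $\phi_0=u\circ r_{\mathcal{X}}$. On each open $n$-face of $\Delta_{\mathcal{X}}$ meeting $U$, pick affine coordinates given by the integral affine structure. Vilsmeier's comparison result \cite{Vilsmeier} then identifies $\text{MA}(\phi_0)$ with a constant multiple of the real Monge--Amp\`ere measure $\det(D^2u)$. So $u$ is a convex function solving $\det D^2 u=\text{const}$ against Lebesgue measure on $U$. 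By Caffarelli-type regularity, on a compact subset $U'\Subset U$ with $\mu_0(U')\ge 1-\delta/2$, $u$ is smooth and strictly convex. (If needed we first shrink $U$ to the locus where $u$ is strictly convex, which still has almost full measure.)

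\textbf{Step 2: build approximate potentials on $X_t$.} Near the region $r_{\mathcal{X}}^{-1}(U')$, the semistable model gives local toric coordinates. The log-modulus map $x\mapsto (\log|z_i|/\log|t|)$ identifies this region with a $T^n$-bundle over $U'$ that is close to $(\C^*)^n$. By \cite{Litheta}, the normalised Kähler potential of $\omega_{CY,t}$ converges in the $C^0$-hybrid topology to $\phi_0$. Hence over $r_{\mathcal{X}}^{-1}(U')$ it is $C^0$-close, uniformly in $t$, to the $T^n$-invariant function $u\circ \text{Log}_t$. In the rescaled coordinates, the semi-flat metric defined by $u$ is an almost CY metric, with error that is $o(1)$ as $t\to 0$, because $\Omega_t$ is asymptotically $\prod d\log z_i$ on these charts. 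This is where semistability is used: it ensures the model $\Omega$ has no extra vanishing along the components.

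\textbf{Step 3: upgrade $C^0$ to $C^\infty$ closeness and apply the perturbation theorem.} This is the main obstacle. We compare the CY potential with the semi-flat potential on the $T^n$-invariant region. Both satisfy complex MA equations with nearly equal right-hand sides, and the difference is $C^0$-small. Savin's small perturbation theorem, or interior Evans--Krylov estimates for MA near a smooth strictly convex solution, then gives local $C^{k,\alpha}$ closeness on a slightly smaller region, in rescaled coordinates of fixed size. Zhang's theorem on the existence of special Lagrangian fibrations near semi-flat CY structures with small $C^{k,\alpha}$ error then produces a special Lagrangian $T^n$-fibration over $U''\Subset U'$. Finally, choose $U''$ with $\mu_0(U'')\ge 1-\delta$. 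The weak convergence $\mu_t\to\mu_0$ in the hybrid topology ensures that the preimage in $X_t$ has $\mu_t$-measure at least $1-\delta$ for small $t$.
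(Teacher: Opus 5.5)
Your proposal is correct and follows the same route as the paper's sketch: Vilsmeier's comparison on $U$ to get a real Monge--Amp\`ere equation, removal of a measure-zero singular set, the $C^0$-hybrid convergence of \cite{Litheta} combined with $\phi_0=\phi_0\circ r_{\mathcal{X}}$, Savin's small perturbation theorem to reach $C^\infty$ closeness to the semi-flat metric, and Zhang's perturbation result for the special Lagrangian fibration. Three points should be made explicit: the full measure of the smooth, strictly convex locus is a genuine input beyond Caffarelli's interior regularity (the paper cites Mooney's partial regularity for it); the fixed-size coordinates needed for Savin's theorem come from passing to the local universal cover in the shrinking torus directions; and Evans--Krylov is not an alternative to Savin there, since only $C^0$ closeness is known a priori.
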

		
	\end{enumerate}

	The upshot is that the metric SYZ conjecture, which is a priori a PDE problem in complex geometry, is reduced to a question about the NA CY potential, which one hopes to resolve using algebro-geometric inputs.




	The key piece of algebraic geometry turns out to be a \emph{canonical basis} in the graded vector space  $\bigoplus_{l\geq 0} H^0(X_K, lL)$ over the field $K=\C(\!(t)\!)$, satisfying \emph{valuative independence}. Intuitively, such  a basis $\{ \theta_\alpha^l\}$ behaves like the monomials on the affinoid torus, which makes the CY manifold resemble a toric variety, hence should be beneficial for producing the SYZ special Lagrangian $T^n$-fibration.

	\begin{Def}\label{Def:valuativeindependence}
For any $l\geq 0$,  we say a $K$-basis  $\{ \theta_\alpha^l \}  \subset H^0(X_K, lL)$  satisfies \emph{valuative independence}, if for every $x\in Sk(X)$ viewed as the valuation $val_x$, we have 
		\[
		val_x(\sum_\alpha a_\alpha \theta^l_\alpha)= \min_{\alpha: a_\alpha\neq 0}  (val(a_\alpha) + val_x(\theta^l_\alpha) ),\quad \forall a_\alpha\in K.
		\]
		(Here we regard sections of $H^0( X_K, lL )$ as local meromorphic functions using some auxiliary local trivialisation of the line bundle, but the valuative independence condition is independent of the choice of trivialisation.)

	\end{Def}

The search for canonical basis is an active  topic in algebraic geometry.

\begin{enumerate}
	\item  In the Gross-Siebert programme \cite{Gross, Grosstheta1,Grosstheta2, Grosstheta3}, given a polarised maximal degeneration of compact CY manifolds, one can build a graded ring from punctured Gromov-Witten invariants, and define the mirror CY family via the relative Proj construction of this graded ring. Consequently, the mirror family carries a polarisation line bundle, whose section ring has a canonical basis called \emph{theta functions}. This is a vast generalisation for the classical theta functions on abelian varieties.

	\item  It is unknown in general whether the Gross-Siebert theta functions satisfies the valuative independence condition for polarised maximal degeneration of compact CY manifolds. However, in an analogous context for cluster varieties, the Gross-Siebert construction produces a distinguished basis of theta functions for the coordinate ring on certain open Calabi-Yau manifolds, and the very recent work of Cheung-Magee-Mandel-Muller \cite{Travis} verified the valuative independence condition.

	\item  H. Blum and Y. Liu	informed the author that  they can prove the following in full generality (not necessarily assuming maximal degeneration).
	
	\begin{thm}
	\cite{BlumLiu}  Let $(X_K, L)\to \text{Spec}(K)$ be any polarised degeneration of smooth projective CY varieties  over the formal punctured disc, where $L$ is relatively ample. Then there exists some integer $m_0\geq 1$, such that for any $l\geq 0$, there exists a $K$-basis $\{ \theta_\alpha^l \}\subset H^0(X_K, m_0 lL)$ satisfying valuative independence. 
	\end{thm}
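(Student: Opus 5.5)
The plan is to work on a good model and reduce valuative independence to a linear-algebra statement about restrictions of sections to the strata of the central fibre.

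\textbf{Choosing the model and the lattice.} First choose a good minimal dlt model $\mathcal{X}\to \text{Spec}\,\C[\![t]\!]$ of $X_K$, with reduced central fibre $\mathcal{X}_0=\sum_i D_i$ and $K_{\mathcal{X}}+\mathcal{X}_0\sim_{\Q} 0$. For such a model the dual complex of $\mathcal{X}_0$ is identified with $Sk(X)$. Then take $m_0\geq 1$ so that $m_0L$ extends to a Cartier, relatively semiample (after twisting, relatively ample) line bundle $\mathcal{L}$ on $\mathcal{X}$.

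\begin{itemize}
\item Every $x\in Sk(X)$ is then a quasi-monomial valuation $v_{J,w}$: here $J$ indexes a stratum $Z_J=\bigcap_{j\in J}D_j$ and $w\in\R_{>0}^J$ is a weight vector normalised by $\sum_j w_j=1$.
\item Since valuative independence is invariant under rescaling basis elements by elements of $K$, it suffices to produce a $\C[\![t]\!]$-basis $\{\theta^l_\alpha\}$ of the lattice $M_l=H^0(\mathcal{X},l\mathcal{L})$. Such a basis is automatically a $K$-basis of $H^0(X_K,m_0lL)$.
\end{itemize}

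\textbf{Reduction to graded pieces.} Next, reformulate valuative independence at $v=v_{J,w}$ as linear independence of initial forms in the graded ring $\text{gr}_v$. Write $I_j$ for the ideal of $D_j$. Locally near the generic point of $Z_J$ the model is (toroidally) snc, so $\text{gr}_{v}$ of $\mathcal{O}$ is identified with the multigraded ring of the normal cone of $Z_J$. In degree $\beta\in \N^J$ this is $\mathcal{O}_{Z_J}(-\sum_j\beta_jD_j|_{Z_J})$, modulo the relation $t=\prod_j z_j$.

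\begin{itemize}
\item Consider the multi-filtration $F^{\beta}M_l=\{s:\ \text{ord}_{D_j}(s)\geq \beta_j\ \forall j\in J\}$. The initial form of $s$ is its image in the component of minimal $w$-weight among the degrees $\beta$ where $s$ does not vanish.
\item The key claim is that, for every $J$, $M_l$ admits a splitting compatible with the multi-filtration $\{F^\beta\}_{\beta\in\N^J}$. That is, there is a basis in which every $F^\beta M_l$ is spanned by a subset of the basis, and the images of basis elements in $F^\beta/\sum_{\beta'>\beta}F^{\beta'}$ are independent.
\item Given such a basis, for any $w$ the initial forms of distinct basis elements sit in independent pieces. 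Hence $v_{J,w}(\sum a_\alpha\theta_\alpha)=\min_\alpha(val(a_\alpha)+v_{J,w}(\theta_\alpha))$ holds for all $w$ at once.
\end{itemize}

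To get such splittings I would use vanishing theorems on the log CY strata. Each $(Z_J, \text{Diff})$ is a log CY pair and $\mathcal{L}$ is ample on it. Kawamata--Viehweg-type vanishing for dlt pairs should give surjectivity of the restriction maps $H^0(\mathcal{X},l\mathcal{L}(-\sum\beta_jD_j))\to H^0(Z_J,\cdot)$. This identifies the graded pieces with spaces of sections on $Z_J$ and makes the lattice of subspaces generated by the $F^\beta$ distributive. A finite distributive lattice of subspaces always admits a compatible basis.

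\textbf{Main obstacle.} The main obstacle is compatibility across different strata: one basis must work simultaneously for all $J$, including all vertices and all top-dimensional simplices of $Sk(X)$. I would first try to build the basis inductively along the stratification, ordered by decreasing dimension of the simplex. The goal is to show that the subspaces $F^\beta_J$, for all $J$ and $\beta$ together, still generate a distributive lattice inside $M_l\otimes\C=H^0(\mathcal{X}_0,l\mathcal{L})$. The argument would use the Mayer--Vietoris-type exact sequences for $\mathcal{X}_0=\bigcup D_i$ together with the vanishing above. If full distributivity fails, the fallback is to enlarge $m_0$ or pass to a finer dlt model. The aim there is to make the relevant restriction maps split, via Frobenius-type or Seshadri-type positivity for large multiples, which is presumably why the statement allows an integer $m_0$.
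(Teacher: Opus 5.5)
The paper does not prove this statement. It quotes it from \cite{BlumLiu}, remarking only that the proof uses finite generation of section rings and multi-parameter test configurations. Judged on its own, your proposal has a genuine gap. What you actually carry out are the easy reductions, and these are fine: an $R$-basis of $M_l$ suffices, and valuative independence at $v_{J,w}$ is linear independence of initial forms. (By continuity of $x\mapsto val_x(s)$ it even suffices to check a dense set of $x$.) The step meant to produce the adapted basis fails, for two reasons.

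First, Kawamata--Viehweg cannot give surjectivity of $H^0(\mathcal{X},l\mathcal{L}(-\sum_j\beta_jD_j))\to H^0(Z_J,\cdot)$ for the $\beta$ that matter. $\mathcal{O}(-D_j)$ is negative on curves of neighbouring components, so $l\mathcal{L}-\sum_j\beta_jD_j$ is not nef for many relevant $\beta$, and the surjectivity is in fact false there. Take the Tate curve with an $I_N$ model and $J$ an edge, so $Z_J$ is a node and the target is $\C$ for every $\beta$. If $s/z^\beta$ is a unit at the node, then $val_x(s)=\langle x,\beta\rangle$ on $\Delta_J$, and Lemma~\ref{lem:Lip} forces $|\beta_1-\beta_2|\le Cl$. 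So the restriction map vanishes for all but finitely many classes of $\beta$ modulo $\Z(1,1)$.

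Second, surjectivity is not the property your argument uses. To pass from independence of images in $F^\beta/\sum_{\beta'>\beta}F^{\beta'}$ to independence of $w$-initial forms, you need these global graded pieces to \emph{inject} into the local graded ring at the generic point of $Z_J$. Concretely, a global section whose local expansion lies in $\sum_j(z^{\beta+e_j})$ must be a sum of global sections from the $F^{\beta+e_j}M_l$. Without this, a basis vector that is nonzero in the abstract graded piece can restrict to zero on $Z_J$. Its $w$-initial exponent then moves with $w$ in a way the multi-filtration does not record. This saturation amounts to an $H^1$-vanishing for the Koszul syzygies of $\mathcal{I}_{Z_J}$ twisted by $l\mathcal{L}(-\sum_j\beta_jD_j)$, which again needs positivity you do not have.

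More seriously, the existence of an adapted basis is the whole content of the theorem, and you assert it rather than prove it. For a single top-dimensional stratum with $n\ge 2$ you already have $|J|\ge 3$ filtrations. Three or more filtrations need not admit a common adapted basis (three distinct lines in $\C^2$). Nothing about individual restriction maps forces the lattice they generate to be distributive.

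Across strata, you need one basis adapted simultaneously to the filtrations by all vertex valuations $\mathrm{ord}_{D_i}$; this is necessary, since a valuatively independent basis is orthogonal at every vertex. Existence of such a basis is equivalent to flatness of the associated multi-Rees algebra over $\C[t_1,\ldots,t_r]$. In other words, the multi-filtration must come from a $\mathbb{G}_m^r$-equivariant degeneration over $\mathbb{A}^r$. This is exactly the ingredient the paper's remark points to: finite generation of the relevant multigraded section ring, followed by construction of a multi-parameter test configuration. That is presumably also where the Veronese factor $m_0$ comes from, rather than from a Cartier index or Seshadri-type positivity. Your Mayer--Vietoris induction and the fallback of enlarging $m_0$ or refining the model do not supply this, so the proposal does not prove the statement.

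A smaller point: on a dlt minimal model the closure of $m_0L$ need not be relatively nef, and no vertical twist makes it relatively ample. You would have to run an $L$-MMP, and the resulting ample model is in general only lc.
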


	\begin{rmk}
		This relies on deep techniques from birational geometry concerning finite generation of section rings, and construction of multi-parameter test configurations. Blum-Liu's general  result also applies to projective log canonical Calabi-Yau pairs over $\text{Spec}(K)$.
	\end{rmk}

\end{enumerate}

		Let $\pi: X\to \mathbb{D}^*$ be a polarised maximal degeneration of  compact CY manifolds. Up to finite base change, without loss of generality we suppose that there is some semistable model $\mathcal{X}\to \mathbb{D}$. 
	Our main theorem is 
	
	\begin{thm}
	 Assuming for any $l\geq 0$, the $K$-vector space $H^0(X_K, lL)$ admits a basis satisfying valuative independence, then the weak comparison property holds for this degeneration family $\pi: X\to \mathbb{D}^*$.
	\end{thm}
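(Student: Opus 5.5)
We will approximate $\phi_0$ by Fubini--Study type potentials built from the valuatively independent bases, and show that on $Sk(X)$ these potentials are piecewise-affine functions of the tropical coordinates. From this we will extract that $\phi_0$ factors through $r_{\mathcal{X}}$ on an open set of full measure.

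\emph{Step 1 (quantisation).} Fix a model metric on $L$; this identifies psh potentials with functions on $X_K^{an}$. For each $l$, fix a valuatively independent basis $\{\theta^l_\alpha\}$. For $x\in Sk(X)$, set
\[
\phi_l(x)=\frac1l\max_\alpha\bigl(-val_x(\theta^l_\alpha)+c^l_\alpha\bigr),
\]
with constants $c^l_\alpha\in\R$ still to be chosen. Valuative independence gives, for every section $s=\sum a_\alpha\theta_\alpha^l$,
\[
-\tfrac1l val_x(s)=\max_\alpha\bigl(-val(a_\alpha)-val_x(\theta^l_\alpha)\bigr)/l .
\]
So the supnorm $\sup_{Sk}$ is computed coefficientwise: the norm induced by $\phi_0$ on $H^0(lL)$, restricted to the skeleton, is diagonalised by $\{\theta^l_\alpha\}$.

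\emph{Step 2 (convergence on the skeleton).} It is known from NA pluripotential theory (Boucksom--Favre--Jonsson, and Boucksom--Eriksson) that the supnorms of $\phi_0$ define norms $\|\cdot\|_l$, and that the associated Fubini--Study potentials $FS(\|\cdot\|_l)$ converge uniformly to $\phi_0$. Since $\mu_0$ is supported on $Sk(X)$, the orthogonality property of $\phi_0$ should let us replace $\sup_{X^{an}}$ by $\sup_{Sk}$ up to $o(l)$ errors; justifying this is the \textbf{main obstacle}. The best candidate is an envelope argument: $\phi_0=P(\phi_0|_{Sk})$, which holds because $\phi_0$ solves the MA equation with measure on $Sk$ (the domination principle). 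Granting this, the diagonalised norms from Step 1 make the Fubini--Study potential exactly $\phi_l$ with $c^l_\alpha=-\log\|\theta^l_\alpha\|_l$. Therefore $\phi_l\to\phi_0$ uniformly on $X_K^{an}$.

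\emph{Step 3 (factorisation).} Each $-val_x(\theta_\alpha^l)$ is a function of $r_{\mathcal{X}}(x)$: on $\Delta_{\mathcal X}$ it is the piecewise-affine function given by the monomial valuations, and it factors through $r_{\mathcal X}$ at every point of $X^{an}$. Hence $\phi_l\circ r_{\mathcal X}$ is psh and agrees with $\phi_l$ on $Sk$. By the same orthogonality/envelope argument, $\phi_l\le\phi_l\circ r_{\mathcal X}$, with equality modulo $o(1)$ wherever retraction does not increase the max. Passing to limits, $\phi_0\circ r_{\mathcal X}$ is a continuous function that is a limit of psh functions, and it agrees with $\phi_0$ on $Sk$. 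Since $\phi_0\ge$ any psh function bounded by $\phi_0$ on $Sk$, we get $\phi_0\ge\phi_0\circ r_{\mathcal X}$.

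The reverse inequality comes from the general fact that $\psi\le\psi\circ r_{\mathcal X}$ for psh $\psi$ on snc models. On the semistable model, $Sk(X)$ equals $\Delta_{\mathcal X}$ up to its open top cells. We can therefore take $U$ to be the union of the open $n$-faces, which carries full Lebesgue measure. This yields $\phi_0=\phi_0\circ r_{\mathcal X}$ over $U$, which is the weak comparison property.
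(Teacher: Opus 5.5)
Your Step 3 has a genuine gap, and it is where the whole content of the theorem lies.

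\textbf{The factorisation claim is false.} You assert that $x\mapsto -val_x(\theta^l_\alpha)$ factors through $r_{\mathcal X}$ at every point of $X_K^{an}$. This only holds over $r_{\mathcal X}^{-1}(O_a)$ for each component $O_a$ of $Sk(X)\setminus\mathcal S_l$. There a single Taylor monomial strictly dominates, so the ultrametric inequality applies; this is the argument in item 3 of Prop.~\ref{prop:limitingtropicalthetafromp2}.

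It fails on the corner locus. Suppose a section is locally $z_0+z_1$ near a point stratum, and $y$ lies on the wall $\{y_0=y_1\}$ of the open face. Take $x$ to be the monomial valuation in the coordinates $(z_0+z_1,z_1,z_2,\dots,z_n)$ with weights $(b,a,c_2,\dots,c_n)$, where $b>a$ and $2a+\sum_{i\ge 2}c_i=1$. Then $r_{\mathcal X}(x)=y$, but $\log|z_0+z_1|(x)=-b<-a=\log|z_0+z_1|(y)$. In fact, if every $\log|s|$ factored through $r_{\mathcal X}$, so would every continuous semipositive potential, and hence every continuous function. That is absurd, since $r_{\mathcal X}$ is not injective.

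\textbf{Consequences for the rest of Step 3.} Your claim that $\phi_l\circ r_{\mathcal X}$ is psh therefore has no basis. For semipositive potentials one only has $\phi\le\phi\circ r_{\mathcal X}$. Taken at face value, your argument would give $\phi_0=\phi_0\circ r_{\mathcal X}$ on all of $X_K^{an}$, for every potential with the domination property. It would never use the Monge--Amp\`ere equation beyond domination, and would let you take $U$ to be all open $n$-faces. That is far stronger than the theorem; the paper's $U$ depends on the solution and is only shown to have full measure. This should have been a warning sign.

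\textbf{What survives, and what is missing.} Once corrected, your approach gives $\phi_l=\phi_l\circ r_{\mathcal X}$ over $r_{\mathcal X}^{-1}(Sk(X)\setminus\mathcal S_l)$. Passing to the limit, $\phi_0=\phi_0\circ r_{\mathcal X}$ over $r_{\mathcal X}^{-1}(E_0)$, where $E_0=Sk(X)\setminus\bigcup_l\mathcal S_l$. But the union of the walls $\mathcal S_l$ is dense; at higher levels, sums such as $\theta_1^m+t^k\theta_2^m$ produce corners arbitrarily close to any point. So $E_0$ has empty interior. Moreover, over a point whose coordinates are $\Q$-linearly independent, the fibre of $r_{\mathcal X}$ is that point alone, so pointwise factorisation there is vacuous.

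The real difficulty is \emph{openness}, which is what Vilsmeier's comparison and the SYZ argument need. You need a neighbourhood of $y$, independent of $l$, on which all theta functions relevant to $\phi_0$ near $y$ are simultaneously affine. The paper gets this in the following steps.
\begin{enumerate}
\item For a compact $\mathcal K\subset\Delta_y^0$, the gradients in $l\mathcal K$ lie in a finitely generated sub-semigroup of $\Gamma_y$. This forces every $\theta^l_\alpha$ with $l^{-1}p(\alpha)\in\mathcal K$ to be affine on one fixed $U\ni y$. The maximal psh extensions of $c(\cdot,p;y)$, $p\in\mathcal K$, then factor through $r_{\mathcal X}$ there.
\item The conjugate-point argument of Prop.~\ref{prop:factorisation} gives factorisation near $y$, provided $\nabla\phi_0(y)\subset\Delta_y^0$.
\item The Monge--Amp\`ere equation enters through the orthogonality property of envelopes (Lemma~\ref{lem:nullmeasure}), which shows this condition holds for $\mu_0$-a.e.\ $y$.
\end{enumerate}

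Your Step 2 is fine: it is Lemma~\ref{lem:FSdiagonal} and needs only the domination principle. The obstacle you flagged there is not the real one.
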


	Together with Thm. \ref{thm:comparisonpropertySYZ} and the exciting upcoming work of Blum-Liu applied to $m_0 L$ instead of $L$, this would \emph{prove the metric SYZ conjecture for all polarised maximal degeneration of compact CY manifolds}. Furthermore, we will interprete the NA MA equation, in terms of an \emph{optimal transport problem} between $Sk(X)$ and an auxiliary probability measure space $(B, \nu)$, which can be constructed via \emph{Okounkov body} techniques.

	\begin{rmk}
	Optimistically, one would hope that $(B,\nu)$ can be identified with the essential skeleton of the mirror family, equipped with the Lebesgue measure. This would lead to an appealing metric version of mirror symmetry, see \cite[Section 6.3]{Litheta} for more discussions.
	\end{rmk}

	\begin{rmk}
	J. Hultgren and M. S. Khalid inform the author that they have an upcoming preprint \cite{HultgrenKhalid}, which defines a cost function explicitly computable from the monodromy of an affine structure. In the case of the Fermat family of cubic curves it agrees with the cost function attained from a valuative independent basis. They also derive a sufficient criterion for when the  associated Kantorovich potential solves a real Monge-Amp\`ere equation, and demonstrate that this is satisfied in some examples. 
	\end{rmk}

	\textbf{Organisation}. We recall some background from non-archimedean geometry in Section \ref{sec:NA}, including the Boucksom-Favre-Jonsson solution to the NA Monge-Amp\`ere equation, the relative volume interpretation for the NA Monge-Amp\`ere energy due to Boucksom-Eriksson, and some exposition on how the weak comparison property implies the metric SYZ conjecture.

	The new contents are contained in Section \ref{sec:valuativeindependence}, \ref{sec:NAMAoptimaltransport}.
	In Section \ref{sec:valuativeindependence} we assume the existence of the valuative independent basis  $\{ \theta_\alpha^l \}$ for $\bigoplus_{l\geq 0} H^0(X_K, lL)$, and derive consequences on the NA semipositive potentials. We will extract a cost function $c(x,p): Sk(X)\times B\to \R$ by taking suitable asymptotic limits for $\log |\theta_\alpha^l|$, which allows us to introduce a distinguished class of functions $\mathcal{P}_c\subset C^0(Sk(X))$ (Section \ref{sec:costfunction}). The functions in $\mathcal{P}_c$ can be canonically identified with continuous semipositive potentials on $(X,L)$ satisfying a domination property (Section \ref{sec:Pcvspsh}). We use the tool of Okounkov bodies to extract more refined information for the cost function (Section \ref{sec:gradientconvexhullOkounkovbody}, \ref{sec:Okounkovcost}), leading to a criterion for the semipositive potential to factor through the retraction map $r_\mathcal{X}: X_K^{an}\to \Delta_\mathcal{X}$ over local open neighbourhoods inside $Sk(X)$ (Section \ref{sec:factorisation}), and an alternative formula for the NA Monge-Amp\`ere energy (Section \ref{sec:relativevolume}).

	In Section \ref{sec:NAMAoptimaltransport} we draw consequences  for a semipositive potential $\varphi$ solving the NA MA equation
$
	\text{MA}(\varphi)= (L^n) \mu,
$
	assuming that $\mu$ is a probability measure supported on $Sk(X)$, and is absolutely continuous with respect to the Lebesgue measure on $Sk(X)$.  The special case  $\mu=\mu_0$ corresponds to the NA CY potential. In this generality, we will prove that the weak comparison property holds for $\varphi$, and the potential $\varphi$ admits an optimal transport interpretation. A crucial ingredient is that certain bad sets have null measure, and the proof involves a delicate application of the orthogonality property for psh envelops.

	\begin{Acknowledgement}
	The author is sponsored by the Royal Society URF. he would like to especially thank H. Blum and Y. Liu for explaining their upcoming work on the existence of valuative independent basis. He also thanks S. Boucksom and M. Gross for past discussions.
	\end{Acknowledgement}

	\section{Non-archimedean pluripotential theory}\label{sec:NA}

		We recall some notions from non-archimedean geometry, based on the work of Boucksom et al. \cite{Boucksom, Boucksom1, Boucksomnew1, Boucksomsemipositive, Boucksomsurvey}; see also \cite[section 5]{Boucksom1}\cite[section 2]{Liintermediate}.

	\subsection{Models and dual complex}

	We denote $R=\C[\![ t ]\!]$, and $K=\C(\!(t)\!)$ with the standard discrete valuation.

	Given a polarised meromorphic degeneration family of $n$-dimensional smooth projective varieties $\pi: X\to \mathbb{D}^*$, upon base change to the formal punctured disc $\text{Spec}(K)$, we obtain a degeneration family $\pi: X_K\to \text{Spec}(K)$.  Let $X_K^{an}$ denote the  \emph{Berkovich space}. Given an affine variety over $K$, the Berkovich space is the space of semivaluations on the ring of functions extending the standard discrete valuation on $K$, and in general $X_K^{an}$ is obtained by gluing the affine pieces.	The semivaluations $v$ are equivalently thought of as multiplicative seminorms $|\cdot |=e^{-v}$, satisfying the ultrametric property
	\[
	|f+g|\leq \max\{ |f|+|g|  \}, \quad |fg|=|f||g|.
	\]

	A \emph{model} of $X_K\to \text{Spec}(K)$ (resp. $X\to \mathbb{D}^*$) is a normal projective scheme $\mathcal{X}$, flat and of finite type over $\text{Spec}(R)$ (resp. the complex disc $\mathbb{D}$), together with an identification with $X_K\to \text{Spec}(K)$ (resp. $X\to \mathbb{D}^*$) away from the special fibre. For any model $\mathcal{X}$ and every irreducible component $E$ of $\mathcal{X}_0$, we write the central fibre as $\mathcal{X}_0=\sum b_iE_i$, and $E_J= \cap_{i\in J}E_i$. We say $\mathcal{X}$ is SNC if $\mathcal{X}$ is regular and $\mathcal{X}_0$ has SNC support, and an SNC model is semistable if all the divisor components on $\mathcal{X}_0$ have multiplicity one. By Hironaka resolution and semistable reduction, up to finite base change, we shall without loss assume that there exists some semistable SNC model. A model of the line bundle $L\to X_K$ is a $\Q$-line bundle $\mathcal{L}$ on a proper model $\mathcal{X}$, together with an identification $\mathcal{L}|_{X_K}=L$.

	The \emph{dual complex} $\Delta_{\mathcal{X}}$ of an SNC model $\mathcal{X}$ is the simplicial complex whose vertices correspond to the irreducible components $E_i$ of the central fibre, and whose faces correspond to the connected components of nonempty intersection strata $E_J$, and are identified as the simplex 
	$\Delta_J\simeq \{  x\in \R_{\geq 0}^{|J|}| \sum_{i\in J} b_ix_i=1 \}.
	$
	Given an SNC model over $\text{Spec}(R)$, there are two natural comparison maps between its dual complex and the Berkovich space.
	
	\begin{enumerate}
		\item   There is a continuous \emph{retraction map}  $r_{ \mathcal{X} }: X_K^{an}\to \Delta_{\mathcal{X} } $. 
		Any point $e^{-v}\in X_K^{an}$ admits
		a centre on $\mathcal{X}$. 
		This is the unique scheme theoretic point
		$\xi \in X_0$ such that 
		$|f|_x \leq 1$ for $f\in \mathcal{O}_{\mathcal{X},\xi}$
		and $|f|_x < 1$ for
		$f\in m_{\mathcal{X},\xi}$.
		Let $J \subset I$ 
		be the maximal subset such that
		$\xi\in E_J$. Then
		$r_{ \mathcal{X} }(x) \in \Delta_{\mathcal{X} }$ 
		is determined by evaluating
		$-\log |z_j|_x$ for $j \in J$.
		These retraction maps form commutative diagrams under further blow ups of SNC models.
	 The tower of retraction maps defines a homeomorphism
		\[
		X_K^{an}\simeq  \varprojlim_{\text{snc models} } \Delta_{ \mathcal{X} }.
		\]


		\item   There is an \emph{embedding map} $emb_{\mathcal{X}}: \Delta_\mathcal{X}\to X_K^{an}$, which is a homeomorphism onto its image. Given $x\in \Delta_J$, we need to associate a quasi-monomial valuation $val_x$, regarded as a point in $X_K^{an}$. Given any local function $f\in \mathcal{O}_{\mathcal{X},\eta}$ where $\eta$ is the generic point of $E_J$, we can Taylor expand
		\[
		f=\sum_{\alpha\in \N^{|J|}} f_\alpha z_0^{\alpha_0}\ldots z_{|J|}^{\alpha_{|J|}},
		\] 
		where $z_i$ are local equations of the divisors $E_i$, and $f_\alpha$ is either a unit or zero. Then $val_x$ is defined as the minimal weighted vanishing order:
		\[
		val_x(f)= \min\{   \langle x, \alpha\rangle  |  f_\alpha\neq 0  \}.
		\]
		In particular, the vertices of $\Delta_J$ map to the divisorial valuations. The requirement $\sum b_ix_i=1$ comes from $val(t)=1$, where $t\in R$ is the uniformizer. The composition $r_\mathcal{X}\circ emb_\mathcal{X}=id_{\Delta_\mathcal{X}}$.	Henceforth we identify the dual complex with its image in $X_K^{an}$.

	\end{enumerate}

	Moreover, there is a natural topology on $X_K^{an}\cup X$, known as the \emph{hybrid topology}, which describes $X_K^{an}$ as a limiting object of the complex manifolds $X_t$ \cite[Appendix]{Boucksom1}\cite[Section 1.2]{PilleSchneider}; a one-page recap is in \cite[section 2.2]{Liintermediate}.

	\subsection{Essential skeleton and volume form asymptote}\label{section:essentialskeleton}

	Suppose $X$ is a meromorphic degeneration family of smooth projective CY manifolds $X_t$ over some punctured disc $\mathbb{D}^*$, and $X$ is equipped with a holomorphic volume form $\Omega$, which induces the holomorphic volume forms $\Omega_t$ on the $X_t$ via $\Omega= \Omega_t\wedge dt$. The \emph{essential skeleton} is the subset of $X_K^{an}$ where the log discrepancy function takes the minimal value $\kappa_0$ \cite[Section 3.1.1]{NicaiseXu}.  Up to passing to semistable reduction and multiplying $\Omega$ by a suitable power of $t$, we may assume $\kappa_0=0$.

	We follow \cite{Boucksom1} to sketch the computation of the CY volume form asymptote. Let $\mathcal{X}$ be an SNC model of the degeneration family, whose dual complex is $\Delta_{\mathcal{X}}$. We write the central fibre as $\mathcal{X}_0=\sum b_iE_i$, and the relative log canonical divisor as $\sum a_i E_i$ (so the log discrepancy function takes the value $\frac{a_i}{b_i}$ at the divisorial point corresponding to $E_i$). We consider an intersection stratum $E_J= \cap_{i\in J} E_i$, and let $z_0,\ldots z_n$ be local coordinates on $E_J$ bounded away from the deeper intersection strata, such that $z_0,\ldots z_k$ are the local defining equation for the $E_i$ with $i\in J$. Locally there is a nowhere vanishing holomorphic function $u_J$ such that
	\[
	\Omega=u_J z_0^{a_0+b_0-1}\ldots z_k^{a_k+b_k-1} dz_0\wedge \ldots  dz_n,
	\]
	and we can arrange
	$
	t= z_0^{b_0}\ldots z_k^{b_k} .
	$
	Then
	\begin{equation}\label{holovollocal}
		\Omega_t= b_0^{-1} u_J z_0^{a_0}\ldots z_k^{a_k} d\log z_1\wedge \ldots d\log z_k\wedge dz_{k+1}\wedge \ldots dz_n.
	\end{equation}
	By the non-negativity of the log discrepancy function, we have $a_i\geq 0$ for $i=0,\ldots k$.

	For $0<|t|\ll 1$, we have a logarithm map $\text{Log}_\mathcal{X}: X_t\to \Delta_\mathcal{X}$, which can be chosen locally as
	\[
	\text{Log}_{\mathcal{X}} (z)=(x_0,\ldots x_k):=  \frac{1}{\log |t|}(   \log |z_0|, \ldots \log |z_k|      ),
	\]
	so that $\sum_0^k b_i x_i=1$ by $t= z_0^{b_0}\ldots z_k^{b_k}$. Passing to polar coordinates allows one to compute the volume asymptote.
	If at least one exponent $a_i>0$, then the local contribution to the volume integral $\int_{X_t} \Omega_t\wedge \overline{\Omega}_t$ is suppressed by a factor $O(\frac{1}{|\log |t||})$. The divisors with $a_i=0$ correspond to the vertices of a subcomplex of $\Delta_{\mathcal{X}}$, which is naturally identified with the essential skeleton $Sk(X)\subset X_K^{an}$ under the embedding map.  The dimension $m=\dim Sk(X)$ then equals the largest value of $k$ such that $a_0=\ldots =a_k=0$, so the total volume integral $\int_{X_t} \Omega_t\wedge \overline{\Omega}_t=O(|\log |t||^m)$. We denote the normalised CY measure as
	\[
	\mu_t= \frac{  \Omega_t\wedge \overline{\Omega}_t }{   \int_{X_t} \Omega_t\wedge \overline{\Omega}_t      } ,
	\]
	which is a probability measure on $X_t$.
	Boucksom-Jonsson \cite{Boucksom} show that

	\begin{thm}\label{Volumeconvergence}
		Under the hybrid topology, the measures $\mu_t$ on $X_t$ converge weakly to a probability measure $\mu_0$ supported on $Sk(X)\subset X_K^{an}$, which puts no measure on the lower dimensional faces of $Sk(X)$, and agrees with a suitably normalized Lebesgue measure $\mu_0$ on the $m$-dimensional faces of $Sk(X)$.
		More concretely, for any continuous function $f$ on the hybrid space $X_K^{an}\cup X$, we have
		\[
		\int_{X_t} fd\mu_t\to \int_{X_K^{an}} fd\mu_0.
		\]
	\end{thm}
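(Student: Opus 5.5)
The plan is to reduce the hybrid weak convergence to an explicit computation of the push-forward of $\mu_t$ under the logarithm map $\text{Log}_{\mathcal{X}}: X_t\to\Delta_{\mathcal{X}}$ of a fixed SNC model. Since $X_K^{an}\simeq\varprojlim\Delta_{\mathcal{X}}$, functions of the form $g\circ r_{\mathcal{X}}$, with $g\in C^0(\Delta_{\mathcal{X}})$ and $\mathcal{X}$ ranging over SNC models, are dense in $C^0(X_K^{an})$ by Stone--Weierstrass. The hybrid topology is built so that such a function extends continuously to $X\sqcup X_K^{an}$ by $g\circ \text{Log}_{\mathcal{X}}$ on $X_t$, up to an error $o(1)$ uniform as $t\to 0$. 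Here one uses that the local choices of $\text{Log}_{\mathcal{X}}$ differ by $O(1/|\log|t||)$, and that they are compatible with blow-ups up to such errors. Since all measures involved are probability measures, the reduction is then a standard $\epsilon/3$ argument. It therefore suffices to prove that
\[
(\text{Log}_{\mathcal{X}})_*\mu_t\rightharpoonup \mu_0
\]
weakly on $\Delta_{\mathcal{X}}$ for one fixed SNC model.

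For the computation, I would cover a neighbourhood of $\mathcal{X}_0$ by finitely many charts adapted to strata $E_J$, away from deeper strata, and take a partition of unity $\{\chi_\beta\}$. In a chart for $E_J$ with $|J|=k+1$, I write $z_i=\exp(x_i\log|t|+\sqrt{-1}\theta_i)$ for $i=1,\dots,k$, with $z_0$ determined by $t=\prod z_i^{b_i}$. By (\ref{holovollocal}),
\[
\Omega_t\wedge\overline{\Omega}_t = b_0^{-2}|u_J|^2 \prod_{i}|t|^{2a_ix_i}\,|\log|t||^k\, dx_1\cdots dx_k\, d\theta_1\cdots d\theta_k \wedge (\text{Lebesgue in } z_{k+1},\dots,z_n),
\]
up to a fixed constant. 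Integrating over the angles and the transverse coordinates gives the following picture.
\begin{enumerate}[label=(\roman*)]
\item If all $a_i=0$, the contribution is $|\log|t||^k$ times a bounded density $\to c_J\cdot\text{Leb}$ on the face $\Delta_J$. Here $c_J$ is the integral of $|u_J|^2$ over the stratum, i.e. the residue of $\Omega$.
\item If some $a_i>0$, the factor $|t|^{2a_ix_i}$ localises to $x_i\lesssim 1/|\log|t||$. This costs one power of $|\log|t||$, so the contribution is $O(|\log|t||^{k-1})$.
\end{enumerate}
Hence the total mass is $\sim C|\log|t||^m$, where $m=\dim Sk(X)$. The dominant term comes exactly from the $m$-dimensional faces of the subcomplex spanned by vertices with $a_i=0$, i.e. from $Sk(X)$. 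After normalising, the lower-dimensional faces and the non-essential faces contribute $O(1/|\log|t||)$. The limit is $\sum_J c_J\,\text{Leb}_{\Delta_J}$ over the top-dimensional faces of $Sk(X)$, normalised to mass one; this is $\mu_0$ by definition, and it charges no lower-dimensional face.

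The main obstacle is uniformity near deeper strata. The chart computation is only clean away from $E_{J'}$ with $J'\supsetneq J$, while the face $\Delta_J$ sits on the boundary of $\Delta_{J'}$. I would handle this by choosing the partition of unity so that on the overlap regions the coordinate $x_j$ for the extra divisor is the relevant small parameter. Bounding $|u_J|$ uniformly and using dominated convergence, the mass of $\text{Log}_{\mathcal{X}}$-preimages of $\epsilon$-neighbourhoods of lower-dimensional faces is $O(\epsilon)+o(1)$. Letting $\epsilon\to0$ after $t\to0$ shows that the boundary overlaps contribute nothing, and that the limit densities on the different top-dimensional faces are the constants $c_J$ independent of the chart.
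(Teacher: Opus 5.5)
Your route matches the sketch in Section~\ref{section:essentialskeleton} (after Boucksom--Jonsson): reduce to $(\text{Log}_{\mathcal{X}})_*\mu_t$ and compute in polar coordinates near the strata. For maximal degenerations your argument works as written. One phrasing fix in the reduction: you need $(\text{Log}_{\mathcal{X}})_*\mu_t\to(r_{\mathcal{X}})_*\mu_0$ for \emph{every} SNC model (or a cofinal family), not for one fixed model. A single $\Delta_{\mathcal{X}}$ cannot detect mass drifting to $r_{\mathcal{X}}^{-1}(Sk(X))\setminus Sk(X)$. Your computation is for an arbitrary SNC model, so this costs nothing.

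The genuine gap is in the bookkeeping when $m<n$, a case the statement covers.
\begin{itemize}
\item Your estimate (ii) is too weak. In a chart adapted to $E_{J'}$, \emph{each} index with $a_i>0$ costs a full power of $|\log|t||$. So the contribution is $O(|\log|t||^{d})$ with $d=\dim(\Delta_{J'}\cap Sk(X))$, and it concentrates on $\Delta_{J'}\cap Sk(X)$. Saving only one power gives $O(|\log|t||^{k-1})$, which exceeds $|\log|t||^m$ once $\Delta_{\mathcal{X}}$ has non-essential faces of dimension $k\ge m+2$. Hence the total-mass asymptotics do not follow from (i)--(ii).
\item Your claim that non-essential faces contribute $O(1/|\log|t||)$ after normalising is false whenever $d=m$. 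Take $J'=J\cup\{j\}$ with $\Delta_J$ an $m$-face of $Sk(X)$ and $a_j>0$. The substitution $|z_j|=|t|^{x_j}$ turns $|t|^{2a_jx_j}|\log|t||\,dx_j\,d\theta_j$ into a constant times $|z_j|^{2a_j-2}dA(z_j)$. So the $x_j$-integral tends to an integral over a fixed $z_j$-disc instead of decaying, and this piece has exact order $|\log|t||^m$.
\item This piece is precisely the part of $\int_{E_J}|\mathrm{Res}_{E_J}\Omega|^2$ near $E_J\cap E_j$. Its $\text{Log}_{\mathcal{X}}$-image is spread over all of $\Delta_J$, so your $\epsilon$-neighbourhood argument near lower-dimensional faces of $Sk(X)$ does not capture it.
\item If you discard these pieces, $c_J$ becomes a partial integral of $|u_J|^2$ that depends on the partition of unity. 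That contradicts your (correct) identification of $c_J$ with the full residue integral.
\end{itemize}
The fix is to prove the sharpened estimate. Then show that these pieces also converge to a constant multiple of Lebesgue measure on $\Delta_J$: for $x_J$ in the interior, the integrand tends to its value at $z_J=0$. None of this arises in the maximal case, where $\dim\Delta_{\mathcal{X}}\le n=m$ and the top strata $E_J$ are points. The paper's own sketch is equally brief at this point and defers to Boucksom--Jonsson for the details.
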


In the case of \emph{maximal degeneration}, $\dim_\R Sk(\mathcal{X})=n$, and the measure convergence can be understood explicitly in a semistable SNC model. Near one of the deepest SNC intersection strata $E_J$ corresponding to an $n$-dimensional simplex in $Sk(\mathcal{X})$, locally the CY volume form is proportional to
	\begin{equation}\label{volumeasymptote}
		\sqrt{-1}^{n^2} \Omega_t\wedge \overline{\Omega}_t =  |u_J|^2  \prod_1^n \sqrt{-1} d\log z_i \wedge d\log \bar{z}_i.
	\end{equation}
	Here $u_J$ limits to its value $u_J(E_J)$ at the point stratum $E_J$, which is called the Poincar\'e residue of $\Omega$, and is easily seen to be independent of the choice of coordinates $z_i$. It is a consequence of the residue theorem on Riemann surfaces that $|u_J(E_J)|^2$ is independent of such $J$ \cite[Thm. 7.1]{Boucksom1}. Thus the pushforward to $\Delta_{\mathcal{X}}$ of the normalised CY measure $\mu_t$  converges smoothly in the interior of $\Delta_J$ to a constant multiple of the \emph{Lebesgue measure}:
	\begin{equation}\label{measureconvergence}
		\text{Log}_{ \mathcal{X} *  }\mu_t= \text{Log}_{ \mathcal{X}*  }\frac{ \Omega_t\wedge \overline{\Omega}_t }{  \int_{X_t}  \Omega_t\wedge \overline{\Omega}_t    } \xrightarrow{t\to 0}    \mu_0:=C_0  dx_1\ldots dx_n.
	\end{equation}
	Notice $dx_1\ldots dx_n$ is canonically defined due to the presence of an integral affine structure on $\Delta_J$. The constant in (\ref{measureconvergence}) is independent of $J$ and its sole purpose is to make $\mu_0$ a probability measure.

	\subsection{Fubini-Study and semipositive metrics}

	We now discuss line bundles and metrics on $X_K^{an}$.
	A GAGA principle says the line bundles on $X_K^{an}$ correspond to the line bundles $L$ on the scheme $X_K$. A \emph{continuous metric} on $L$ assigns to each local section $s$ a nonnegative continuous  local function $\norm{s}$ on open subsets of $X_K^{an}$, compatible with the sheaf structure, such that $\norm{fs}(x)= |f|_x\norm{s}(x)$, and $\norm{s}>0$ if $s$ is a local frame of $L$.
	Given a continuous metric, any other continuous metric on $L$ is of the form $\norm{\cdot} e^{-\phi}$ for some $\phi\in C^0(X^{an})$. As such $\phi$ is referred to as a \emph{potential} function.

For any model $\Q$-line bundle $\mathcal{L}\to \mathcal{X}$ of $L\to X$, we can associate a unique metric $\norm{\cdot}_{ \mathcal{L} }$ on $L$ with the following property: if $s$ is a nowhere vanishing local section of the line bundle $\mathcal{L}^{\otimes k}$ for some $k\in \N$, on an open set $\mathcal{U}\subset \mathcal{X}$, then $\norm{s}_{ \mathcal{L} } \equiv 1$ on $\mathcal{U}\cap X_K$.  Such metrics are called \emph{model metrics}, which are dense within the continuous metrics on $L\to X_K^{an}$. In the special case of trivial $L=\mathcal{O}\to X$, then $\mathcal{L}\to \mathcal{X}$ corresponds to vertical $\Q$-divisors, which 
gives the \emph{model functions}. These functions form a dense subset of  $C^0(X_K^{an})$.

	A norm on a finite dimensional $K$-vector space $V$ is called \emph{ultrametric} if $\norm{x+y}_V\leq \max(\norm{x}_V, \norm{y}_V)$. Concretely, one can select an orthogonal $K$-basis $s_1,\ldots s_N$, namely
		\[
	\norm{ a_1s_1+ \ldots+ a_N s_N}_V=\max\{ |a_1|\norm{s_1}_V,\ldots ,|a_N|  \norm{s_N}_V  \}, \quad \forall a_i\in K.
	\]
	Now let $L$ be a relatively ample line bundle, and let $l\geq 1$. Given an ultrametric norm on the $K$-vector space $V=H^0(X_K,lL)$ which generates the line bundle $L^{\otimes l}\to X$, the \emph{NA Fubini-Study metric} on the line bundle $L\to X_K^{an}$ is defined by
	\[
	\norm{s}_{FS,l}(x)=  \inf_{ \tilde{s}\in V, \tilde{s}(x)=s^{\otimes l}(x)} \norm{ \tilde{s}}_V^{1/l}, \quad \forall x\in X_K^{an}.
	\]
	Concretely, given an orthogonal $K$-basis $s_1, \ldots s_N$ for $V$, we can write
  $\norm{s_j}_V= e^{c_j} $ for $c_j\in \R$, and  then
	\begin{equation*}
		\norm{s}_{FS ,l}(x)= \frac{ |s(x)|}{ \max_j \{  |s_j(x)|e^{-c_j}  \}^{1/l}   }, \quad \forall x\in X^{an}.
	\end{equation*}
	This can be recast in terms of potentials
	\begin{equation}\label{NAFubiniStudy}
		\phi_{FS,l}= \frac{1}{l} \max_{1\leq i\leq N}(\log |s_i| -c_i),
	\end{equation}
	where to evaluate $|s_i|$, we implicitly use a local trivializing section of $L$ coming from the choice of a model $\Q$-line bundle.

	A \emph{continuous semipositive metric} $\norm{\cdot}=\norm{\cdot}_{\mathcal{L}}e^{-\phi}$ on $L$ can be then defined as a uniform limit of some sequence of NA Fubini-Study metrics. The space of these psh potentials $\phi$ is denoted as $\text{CPSH}(X_K^{an}, \mathcal{L})$. These potentials satisfy an \emph{equicontinuity} estimate.

	\begin{prop}\cite[Chapter 6]{Boucksomsemipositive}
	Let $L\to X$ be a relatively ample line bundle, and let $\mathcal{L}\to \mathcal{X}$ be a fixed model $\Q$-line bundle. Then there exists a constant depending only on $(\mathcal{X},\mathcal{L})$, such that for every $\phi\in \text{CPSH}(X_K^{an}, \mathcal{L})$, the composition $\phi\circ emb_{\mathcal{X}}$ is convex and $C$-Lipschitz on each face of $\Delta_\mathcal{X}$.
	\end{prop}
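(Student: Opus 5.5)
The plan is to prove the statement first for NA Fubini--Study potentials, with a constant independent of $l$ and of the chosen norm, and then pass to uniform limits. Convexity and a fixed Lipschitz bound on a compact simplex are both preserved under uniform convergence. So it suffices to treat $\phi=\phi_{FS,l}-\phi_{\mathcal{L}}$ for an ultrametric norm on $H^0(X_K,lL)$ with orthogonal basis $s_1,\ldots,s_N$, as in (\ref{NAFubiniStudy}). Up to replacing $\mathcal{X}$ by a blow-up, which only refines the faces and changes constants, I would assume $\mathcal{X}$ is SNC and $\mathcal{L}$ is an honest line bundle on $\mathcal{X}$ after clearing denominators.

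\textbf{Convexity.} Fix a face $\Delta_J$ and a local frame $e$ of $\mathcal{L}$ near the generic point $\eta$ of $E_J$. Write $s_i=f_i e^{\otimes l}$ with $f_i$ meromorphic, and write $f_i=g_i\prod_{j\in J}z_j^{-m_j}$ with $g_i\in\mathcal{O}_{\mathcal{X},\eta}$. The definition of $emb_{\mathcal{X}}$ via Taylor expansion then gives
\[
\log|s_i|(val_x)=-val_x(f_i)=\max_{\alpha:\,g_{i,\alpha}\neq 0}\bigl(\langle x,m-\alpha\rangle\bigr),
\]
which is a maximum of affine functions of $x\in\Delta_J$, hence convex. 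Therefore $\phi\circ emb_{\mathcal{X}}=\frac1l\max_i(\log|s_i|-c_i)$ is convex on $\Delta_J$.

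\textbf{Uniform Lipschitz bound.} This is the main obstacle. The individual slopes $(m-\alpha)/l$ are unbounded, so I cannot simply read off a bound from the affine pieces. I would instead argue as follows.

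1. Two-sided bound. Since $L$ is relatively ample, choose $k$ and a vertical divisor $D$ so that $k\mathcal{L}+D$ is relatively ample and globally generated on $\mathcal{X}$. Comparing with $\phi_{\mathcal{L}}$ gives bounds of the form $\sup_{\Delta_{\mathcal{X}}}\phi-\inf_{\Delta_{\mathcal{X}}}\phi\le C$ for psh $\phi$ normalised by $\sup\phi=0$. Since adding constants does not affect Lipschitz constants, this normalisation is harmless.

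2. Slopes in the interior. A convex function on $\Delta_J$ with oscillation at most $C$ has slopes bounded by $C/\mathrm{dist}(x,\partial\Delta_J)$. This controls the interior of each face but not the boundary.

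3. Slopes up to the boundary. To control the slope near a facet $\{x_j=0\}$, I would exploit that $\phi$ is convex not just on $\Delta_J$ but along the whole star of the face in $\Delta_{\mathcal{X}}$. The key point is that the piecewise-affine function $\phi_{\mathcal{L}}$, i.e. $x\mapsto\langle x,\mathcal{L}\rangle$ computed from the model, has derivative along directions towards a vertex $E_i$ controlled by the intersection numbers of $\mathcal{L}|_{E_i}$ with the curves in $E_i$. Concretely: restricting $s$ to the stratum $E_{J\setminus\{j\}}$, the leading Taylor coefficient of each $s_i$ gives a section of $l\mathcal{L}$ twisted by the divisors $z_j$ on that stratum. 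Its nonvanishing, forced by the maximality of the term defining $\phi$, bounds the exponent $\alpha_j/l$ above and below by the degree of $\mathcal{L}+\sum_k x_k E_k$ on curves through the stratum. This step, which bounds the one-sided directional derivatives at boundary points by constants depending only on intersection numbers of $(\mathcal{X},\mathcal{L})$, is where the real work lies. If the curve-degree argument proves awkward, I would fall back on the approach of \cite[Chapter 6]{Boucksomsemipositive}: realise $\phi\circ emb_{\mathcal{X}}$ as a limit of functions $\frac1l\log\max|s_i|$ and use that $\phi_{\mathcal{L}}+$ vertical model functions is nef, so that $\phi$ is dominated near each vertex by the tropicalisation of a fixed nef model.

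Combining the interior estimate with the boundary directional-derivative bounds, convexity then gives a global $C$-Lipschitz bound on each face, uniform in $l$ and in the norm. Passing to the uniform limit completes the argument.
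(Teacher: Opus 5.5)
Your reduction to Fubini--Study potentials, the passage to uniform limits, and the convexity argument are fine. At the generic point of $E_J$, each $\log|s_i|$ is a finite maximum of the affine functions $x\mapsto\langle x,m-\alpha\rangle$. The paper itself proves nothing here and defers to \cite{Boucksomsemipositive}, so your proposal has to stand on its own. The gap is the uniform Lipschitz bound, which is the actual content of the statement, and the proposal does not establish it.

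\textbf{Your step 1.} It is asserted, not proved. Comparing with the model metric of $\mathcal{L}$, or with a single globally generated $k\mathcal{L}+D$, can only give upper bounds. It gives no lower bound $\inf_{\Delta_{\mathcal{X}}}\phi\geq\sup\phi-C$. At the vertices this lower bound is an Izumi-type estimate, proved roughly as follows:
\begin{enumerate}
\item Write $\mathrm{div}(s)=H+\sum_m a_mE_m$ with $H$ horizontal.
\item Restrict the effective divisor $H\sim l\mathcal{L}-\sum_m a_mE_m$ to each $E_i$ and pair with $A_i^{n-1}$, for $A_i$ ample on $E_i$.
\item Rewrite $E_i|_{E_i}$ using $\sum_m b_mE_m\sim 0$.
\item Propagate along the connected dual graph to get $|a_m/b_m-a_{m'}/b_{m'}|\leq Cl$.
\end{enumerate}
Away from the vertices, the lower bound amounts to bounding the multiplicity of $H$ along the strata $E_J$ by $Cl$, which again needs a degree argument on strata. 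So your step 2 has no input until this is done.

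\textbf{Your step 3.} This is where the proof has to happen, and it remains a heuristic. $\phi$ is not convex across faces of $\Delta_{\mathcal{X}}$, and a facet of $\Delta_J$ may lie in no other face, so there is no convexity on the star to exploit. The fallback, domination from above by a fixed nef model near the vertices, only bounds inward slopes from above. What must be ruled out is a steep descent at the boundary, as for $-\sqrt{x}$ at $0$.

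The stratum idea is the right mechanism, but the inequality you state is not the right one and does not close on its own. Take $x$ in the relative interior of a face $\Delta_{J'}$ of $\Delta_J$, with dominant exponent $\beta$ in the $J'$-directions. The leading coefficient of $s$ is a nonzero section of $\bigl(l\mathcal{L}-\sum_{m\notin J'}a_mE_m-\sum_{k\in J'}\beta_kE_k\bigr)|_{E_{J'}}$, not of a twist by $\sum_k x_kE_k$. Its vanishing order along $E_j\cap E_{J'}$, for $j\in J\setminus J'$, is the normal exponent governing the inward slope. Pairing with $A^{\dim E_{J'}-1}$ bounds this order by $Cl$ only once you know two things: that the $a_m/b_m$ lie in an interval of length $O(l)$, and that the tangential exponents $\beta_k-a_k$, $k\in J'$, are $O(l)$. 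The second is needed because $E_k|_{E_{J'}}$ for $k\in J'$ has degree of either sign.

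So a complete argument needs three ingredients:
\begin{enumerate}
\item the vertex estimate above;
\item an induction on $\dim\Delta_{J'}$ starting from the vertices;
\item monotonicity of directional derivatives of a convex function along lines, which converts lower bounds on inward slopes at boundary points into a two-sided slope bound on the whole face.
\end{enumerate}
None of this is set up in the proposal, and it is exactly the part that carries the content of the proposition.
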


This equicontinuity result has the following useful consequence.

	\begin{lem}\cite[Lemma 2.3]{Litheta}\label{lem:Lip}
		(Uniform Lipschitz estimate)
		Let $l\geq 1$, and let $L\to X$ be a relatively ample line bundle. Given a model $\mathcal{L}\to \mathcal{X}$, the valuation $x\mapsto val_x(s)$ for any nonzero $s\in H^0(X_K^{an}, lL)$ makes sense as a function on the dual complex $\Delta_{\mathcal{X}}$ using the local trivialisation of $\mathcal{L}$. On each face of $\Delta_{\mathcal{X}}$, we have a uniform estimate
		\[
		l^{-1}|val_x(s)- val_{x'}(s) | \leq C|x-x'|, \quad \forall x,x'\in \Delta_J,
		\]
		where the constant is independent of $x,x',s,l$.
	\end{lem}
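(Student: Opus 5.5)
The plan is to realise $x\mapsto -l^{-1}val_x(s)$ on $\Delta_{\mathcal{X}}$ as a pointwise limit of continuous semipositive potentials in $\text{CPSH}(X_K^{an},\mathcal{L})$. The equicontinuity Proposition from \cite[Chapter 6]{Boucksomsemipositive} then gives a Lipschitz constant depending only on $(\mathcal{X},\mathcal{L})$, and a uniform Lipschitz bound survives pointwise limits. Throughout, $|s|$ is evaluated using the local trivialisation of the fixed model $\mathcal{L}$, so that $\log|s|(x)=-val_x(s)$ for $x\in\Delta_{\mathcal{X}}$. This quantity is finite there, because quasi-monomial valuations do not vanish identically on a nonzero section.

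\textbf{Step 1: reduction to globally generated degrees.} Since $L$ is relatively ample, there is $k_0$ such that $klL$ is generated by $H^0(X_K,klL)$ for all $k\geq k_0$ and all $l\geq 1$. For the estimate in degree $l$, replace $s$ by $s^{k}\in H^0(X_K,klL)$ with $k=k_0$. Since $val_x(s^k)=k\,val_x(s)$, we have $(kl)^{-1}val_x(s^k)=l^{-1}val_x(s)$. So it suffices to treat sections in degrees $l$ where $lL$ is globally generated, as long as the constant does not depend on $l$.

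\textbf{Step 2: Fubini--Study approximants.} Complete $s_1:=s$ to a $K$-basis $s_1,\dots,s_N$ of $V=H^0(X_K,lL)$. For $M>0$, put the ultrametric norm on $V$ for which this basis is orthogonal, with $c_1=0$ and $c_j=M$ for $j\geq 2$. Because $V$ generates $lL$, formula (\ref{NAFubiniStudy}) shows that
\[
\phi_M=\frac1l\max\Big(\log|s|,\ \max_{j\geq 2}\log|s_j|-M\Big)
\]
is a NA Fubini--Study potential, hence lies in $\text{CPSH}(X_K^{an},\mathcal{L})$. By the Proposition, $\phi_M\circ emb_{\mathcal{X}}$ is $C$-Lipschitz on each face $\Delta_J$, with $C$ depending only on $(\mathcal{X},\mathcal{L})$. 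In particular $C$ is independent of $M$, $s$, $l$ and of the chosen completion of the basis.

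\textbf{Step 3: passage to the limit.} On $\Delta_{\mathcal{X}}$ each $\log|s_j|$ is finite, and $\log|s|$ is finite. Hence as $M\to\infty$ we get $\phi_M(x)=l^{-1}\log|s|(x)=-l^{-1}val_x(s)$ for all $M$ large, depending on $x$. Pointwise convergence preserves the bound $|\phi_M(x)-\phi_M(x')|\leq C|x-x'|$, which gives the claimed estimate on each face.

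The main point requiring care is Step 2. One must check that the equicontinuity constant is genuinely uniform over all Fubini--Study potentials of all degrees, including the degenerate norms with $c_j=M\to\infty$. This is exactly what the cited Proposition asserts for all of $\text{CPSH}$. The other thing to verify is global generation in the chosen degree, which Step 1 arranges.
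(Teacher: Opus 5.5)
Your proposal is correct and follows the paper's route. The paper gives no separate argument and presents this lemma as a direct consequence of the equicontinuity proposition for $\text{CPSH}(X_K^{an},\mathcal{L})$ from \cite[Chapter 6]{Boucksomsemipositive}. Your Fubini--Study approximants $\phi_M$, built after passing to the globally generated degree via $s^{k_0}$ and then letting $M\to\infty$ pointwise on $\Delta_{\mathcal{X}}$, are a careful way of applying that proposition to $l^{-1}\log|s|$, which is not itself a continuous potential on $X_K^{an}$.
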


	\subsection{NA Monge-Amp\`ere equation}\label{sec:NAMAeqn}

	The non-archimedean Monge-Amp\`ere (NA MA) measure can be defined through \emph{intersection theory}. 
	Given any model $\Q$-line bundle $\mathcal{L}\to \mathcal{X}$ for  $L\to X_K$, we write $\mathcal{X}_0= \sum_i b_i E_i$, then the NA MA measure for the model metric $\norm{\cdot}_ {\mathcal{L} }$ is the signed atomic measure
	\[
	\text{MA}( \norm{\cdot}_{ \mathcal{L} } )= \sum_{E_i} b_i (\mathcal{L}^n\cdot E_i) \delta_{E_i},
	\] 
	where $\delta_{E_i}$ denotes the delta measure supported at the divisorial point  associated to $E_i$. If $\norm{\cdot}_{\mathcal{L}  }$ is furthermore semipositive, then the intersection numbers are non-negative, so $\text{MA}( \norm{\cdot}_{ \mathcal{L} } )$ is a measure, and the total measure is the intersection number $(L^n)$.

	Fixing a reference model $\Q$-line bundle $\mathcal{L}\to \mathcal{X}$, then any $\phi\in \text{CPSH}(X_K^{an}, \mathcal{L})$ can be $C^0$-approximated by the potentials of a sequence of  semipositive model metrics \cite[Cor. 8.8]{Boucksomsemipositive}, and its NA MA measure $\text{MA}(\phi)$ can be defined as the unique weak limit Radon measure of the NA MA measures for its approximation sequence \cite[Cor. 3.5]{Boucksom}.

	The central result of NA pluripotential theory is the solution of the NA Calabi conjecture.
	
	\begin{thm}\label{thm:BFJ}
		\cite{Boucksomsurvey, Boucksom} Let $L\to X$ be a relatively ample line bundle, and let $\mathcal{L}\to \mathcal{X}$ be a fixed model $\Q$-line model. Let $\mu$ be any Radon probability measure supported on some fixed dual complex inside $X_K^{an}$. Then there is a unique (up to an additive constant) $\phi \in \text{CPSH}(X_K^{an}, \mathcal{L})$ solving the NA MA equation $\text{MA}(\phi )=(L^n)\mu$.
	\end{thm}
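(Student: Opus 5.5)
The plan is to follow the variational approach of Boucksom--Favre--Jonsson. We work in the class $\mathcal{E}^1(X_K^{an},\mathcal{L})$ of finite-energy psh potentials, obtained as decreasing limits of elements of $\text{CPSH}(X_K^{an},\mathcal{L})$. On this class we use the Monge--Amp\`ere energy
\[
E(\phi)=\frac{1}{n+1}\sum_{j=0}^n \int \phi\, \text{MA}(\phi^{(j)}),
\]
with $\phi^{(j)}$ denoting the mixed $j$-fold/$(n-j)$-fold Monge--Amp\`ere terms against the reference metric. We also use the functional
\[
F_\mu(\phi)=E(\phi)-(L^n)\int \phi\, d\mu .
\]
The Euler--Lagrange equation of $F_\mu$ is exactly $\text{MA}(\phi)=(L^n)\mu$.

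The steps are, in order:
\begin{enumerate}
\item Because $\mu$ is supported on a fixed dual complex $\Delta_{\mathcal{X}}$, show that $\phi\mapsto\int\phi\,d\mu$ is continuous along decreasing nets. Using the Proposition above, $\phi|_{\Delta_{\mathcal{X}}}$ is convex and uniformly Lipschitz on faces after normalising $\sup\phi=0$. This gives the properness estimate $F_\mu(\phi)\leq -\epsilon J(\phi)+C$ and upper semicontinuity of $F_\mu$.
\item Take a maximising sequence and normalise it by $\sup=0$. Compactness of normalised psh potentials modulo pointwise convergence on $\Delta_{\mathcal{X}}$, with Arzel\`a--Ascoli face by face, combined with upper semicontinuity of $E$, produces a maximiser $\phi$ in $\mathcal{E}^1$.
\item For $f$ a model function, set $\phi_s=P(\phi+sf)$, the psh envelope. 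By the orthogonality property $\int (P(\psi)-\psi)\,\text{MA}(P(\psi))=0$, which holds under continuity of envelopes (available here by the relevant resolution-of-singularities/multiplier-ideal input), we have the differentiability formula $\frac{d}{ds}E(P(\phi+sf))|_{s=0}=\int f\,\text{MA}(\phi)$. Since $\phi$ maximises $F_\mu$ and $P(\phi+sf)\leq \phi+sf$, the derivative of $s\mapsto E(P(\phi+sf))-(L^n)\int(\phi+sf)\,d\mu$ vanishes at $s=0$. Density of model functions in $C^0$ then gives $\text{MA}(\phi)=(L^n)\mu$.
\item Show the solution is continuous, i.e.\ lies in $\text{CPSH}$. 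Because $\mu$ lives on $\Delta_{\mathcal{X}}$, we may replace $\phi$ by $P(\phi\circ r_{\mathcal{X}})$-type envelopes of functions that are continuous on the skeleton. By the orthogonality property, these have the same Monge--Amp\`ere measure on $\text{supp}\,\mu$. Continuity of envelopes of continuous data then gives continuity, with uniform convergence coming from the Lipschitz control on faces.
\item For uniqueness, take two solutions $\phi_1,\phi_2$. Use convexity of $E$'s negative along the segment together with the standard identity $I(\phi_1,\phi_2)=\int(\phi_1-\phi_2)(\text{MA}(\phi_2)-\text{MA}(\phi_1))=0$. The non-archimedean analogue of the Calabi/B\l ocki argument, via the positivity of $dd^c$ of differences in intersection-theoretic form approximated by models, forces $\phi_1-\phi_2$ to be constant.
\end{enumerate}

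The main obstacle is step 3: the differentiability of $E\circ P$ relies on the orthogonality property and on continuity of psh envelopes. In residue characteristic zero these are available, but proving them needs multiplier-ideal approximation on models; that input is what I would cite rather than reprove.
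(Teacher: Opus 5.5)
Your route is the Boucksom--Favre--Jonsson variational method that the paper sketches for this cited theorem, and Steps 1--3 are acceptable as a sketch. Step 4, however, does not work as stated. The orthogonality property applied to $\psi:=P(\phi\circ r_{\mathcal{X}})$ only says that $\text{MA}(\psi)$ is carried by the contact set $\{\psi=\phi\circ r_{\mathcal{X}}\}$. It says nothing about how much mass $\text{MA}(\psi)$ puts there, so it gives neither $\text{MA}(\psi)=\text{MA}(\phi)$ on $\text{supp}\,\mu$ nor $\psi=\phi$.

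What you do get is the following. First, $\phi\le\psi$ on $X_K^{an}$: this follows from $\phi\le\phi\circ r_{\mathcal{X}}$, by approximating $\phi$ from above by continuous psh potentials and using Dini. Second, $\psi\le\phi\circ r_{\mathcal{X}}=\phi$ on $\Delta_{\mathcal{X}}$. Hence $\psi=\phi$ on $\Delta_{\mathcal{X}}\supset\text{supp}\,\mu$. Upgrading this to $\psi=\phi$ everywhere needs a domination principle for the finite-energy maximiser (cf.\ Lemma \ref{lem:dominationproperty}). That is exactly where the capacity estimates in the paper's sketch enter, and your proposal has no substitute for it.

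There is a cheaper repair. Since $E$ is monotone, $E(\psi)\ge E(\phi)$, while $\int\psi\,d\mu=\int\phi\,d\mu$. So $\psi$ is itself a \emph{continuous} maximiser of $F_\mu$. Your Step 3 applied at $\psi$ then gives $\text{MA}(\psi)=(L^n)\mu$, and at $\psi$ you only need differentiability of $E\circ P$ on $C^0(X_K^{an})$.

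The same device fixes Step 2. There, Arzel\`a--Ascoli on the faces of $\Delta_{\mathcal{X}}$ only controls restrictions to $\Delta_{\mathcal{X}}$, not potentials on $X_K^{an}$. Replace a maximising sequence $\phi_j$ by $P(\phi_j|_{\Delta_{\mathcal{X}}}\circ r_{\mathcal{X}})$, which does not decrease $F_\mu$. Since $P$ is $1$-Lipschitz for the sup norm, uniform convergence on $\Delta_{\mathcal{X}}$ becomes uniform convergence on $X_K^{an}$. This produces a continuous maximiser without any appeal to compactness in $\mathcal{E}^1$.

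Step 5 is a pointer rather than an argument. For model metrics coming from relatively ample models, $I(\phi_1,\phi_2)=0$ does force the vertical divisor representing $\phi_1-\phi_2$ to be a multiple of $\mathcal{X}_0$, by the Hodge index theorem (Zariski's lemma). But this equality case does not survive uniform limits of model metrics without a quantitative, B\l ocki-type estimate. Supplying that estimate is the content of the non-archimedean Calabi theorem of Yuan--Zhang. You should cite it explicitly as the input for uniqueness, rather than present uniqueness as following from approximation by models.
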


	In particular, when $(X,L)\to \mathbb{D}^*$ is a polarised meromorphic degeneration of compact CY manifolds, we can take $\mu_0$ to be the normalised Lebesgue measure on $Sk(X)\subset X_K^{an}$. The unique (up to constant) solution $\phi_0\in \text{CPSH}(X_K^{an}, \mathcal{L})$ to the NA MA equation
	\begin{equation}
	\text{MA}(\phi_0)= (L^n) \mu_0,
	\end{equation}
	is called the \emph{NA CY potential}, and $\norm{\cdot}_{\mathcal{L}} e^{-\phi_0}$ is called the NA CY metric.
	
	The original proof by Boucksom-Favre-Jonsson uses a variational strategy. They introduce a concave functional $E: \text{CPSH}(X_K^{an}, \mathcal{L})\to \R$, called the \emph{Monge-Amp\`ere energy}, whose first variation is formally the NA MA measure. Then they consider the convex functional $F_{\mu}: \text{CPSH}(X_K^{an}, \mathcal{L})\to \R$,
	\begin{equation}\label{eqn:Fmu01}
		F_{\mu}(\phi)= -  \frac{1}{(L^n)} E(\phi)+ \int_{X_K^{an}} \phi d\mu =- \frac{1}{(L^n)}E(\phi)+ \int_{Sk(X)} \phi d\mu .
	\end{equation}
Upon enlarging the class of semipositive metrics, one can use compactness to extract a minimiser $\phi$, and prove that $\phi$ in fact solves the NA MA equation by a subtle \emph{differentiability property} for the Monge-Amp\`ere energy, and furthermore $\phi$ has $C^0$-regularity via capacity estimates \cite{Boucksom, Boucksomsurvey}.

	\begin{rmk}
	In our context of polarised degeneration for compact Calabi-Yau manifolds, one can show that up to suitable normalisation, the CY potentials on $(X_t, \frac{1}{|\log |t||} c_1(L))$ converge in the $C^0$-hybrid topology to the NA CY potential $\phi_0$ on $X_K^{an}$. 
	\end{rmk}
	
The NA MA equation satisfies the following \emph{domination principle}.

\begin{lem}\label{lem:dominationproperty}
\cite[Lemma 8.4]{Boucksom} Let $\varphi, \psi \in CPSH(X_K^{an}, \mathcal{L})$, suppose $\mu= \text{MA}(\psi)$ is supported in the dual complex $\Delta_{\mathcal{X}}$ of some SNC model $\mathcal{X}$, and $\varphi\leq \psi$ $\mu$-a.e. Then $\varphi\leq \psi$ on $X_K^{an}$.
\end{lem}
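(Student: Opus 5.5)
The plan is to reduce the lemma to the uniqueness part of Theorem \ref{thm:BFJ}, using the variational characterisation of solutions through the Monge-Amp\`ere energy $E$. Set $\nu=\mu/(L^n)$. This is a Radon probability measure supported on $\Delta_{\mathcal{X}}$, and $\psi$ solves $\text{MA}(\psi)=(L^n)\nu$. Put
\[
u=\max(\varphi,\psi).
\]
A maximum of two continuous semipositive potentials is again in $\text{CPSH}(X_K^{an},\mathcal{L})$: if $\varphi_j,\psi_j$ are Fubini--Study approximants, then $\max(\varphi_j,\psi_j)$ is Fubini--Study after passing to a common multiple of $l$. By construction $u\geq \psi$ everywhere. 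The hypothesis $\varphi\leq\psi$ $\mu$-a.e. gives $u=\psi$ $\mu$-a.e., so $\int u\,d\nu=\int\psi\,d\nu$.

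Next I would compare the functional $F_\nu$ of (\ref{eqn:Fmu01}) at $u$ and at $\psi$. The energy $E$ is monotone, so $u\geq \psi$ gives $E(u)\geq E(\psi)$. Together with the equality of the integrals this yields
\[
F_\nu(u)=-\frac{1}{(L^n)}E(u)+\int u\,d\nu\;\leq\; -\frac{1}{(L^n)}E(\psi)+\int \psi\,d\nu=F_\nu(\psi).
\]
Monotonicity of $E$ can be seen either from its definition as a sum of mixed Monge-Amp\`ere integrals, or from concavity together with positivity of the first variation. On the other hand, since $\psi$ solves $\text{MA}(\psi)=(L^n)\nu$, concavity of $E$ gives $E(u)-E(\psi)\leq\int (u-\psi)\,\text{MA}(\psi)$. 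Hence $F_\nu(u)\geq F_\nu(\psi)$, and in fact $\psi$ minimises $F_\nu$ over $\text{CPSH}$. It follows that $u$ is also a minimiser of $F_\nu$.

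The key step is to show that every continuous minimiser of $F_\nu$ solves the Monge-Amp\`ere equation. For this I would invoke the Boucksom--Favre--Jonsson differentiability property of $E$ composed with the psh envelope (orthogonality), exactly as in their proof of Theorem \ref{thm:BFJ}. The support hypothesis on $\Delta_{\mathcal{X}}$ ensures that $\nu$ is a measure for which their variational machinery applies, so that $\int f\,d\nu$ makes sense for continuous test functions and the differentiation argument goes through. Granting this, $\text{MA}(u)=(L^n)\nu=\text{MA}(\psi)$. The uniqueness in Theorem \ref{thm:BFJ} then gives $u=\psi+c$ for a constant $c$. Since $u=\psi$ on a set of full $\nu$-measure, which is nonempty because $\nu$ is a probability measure, we get $c=0$. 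Thus $\max(\varphi,\psi)=\psi$, that is, $\varphi\leq\psi$ on all of $X_K^{an}$.

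I expect the main obstacle to be the step ``minimiser $\Rightarrow$ solution''. It requires the full strength of the differentiability of $E\circ P$ (the psh envelope), since the variation $u+t f$ leaves the psh class. If I want to avoid it, the fallback is a direct energy argument. From $E(u)=E(\psi)$ with $u\geq\psi$, the cocycle formula
\[
E(u)-E(\psi)=\frac{1}{n+1}\sum_{j}\int(u-\psi)\,\text{MA}(u^{(j)},\psi^{(n-j)})
\]
forces $\int(u-\psi)\,\text{MA}(u)=0$. One would then run a comparison-principle argument to conclude $u=\psi$, but that argument is less canonical, so I would try the variational route first.
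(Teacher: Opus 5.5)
Your argument is correct, and it is the standard variational proof of this domination principle; the paper does not prove it but quotes it from \cite[Lemma 8.4]{Boucksom}. The chain runs as follows: $u=\max(\varphi,\psi)$ satisfies $F_\nu(u)\le F_\nu(\psi)=\min F_\nu$, so $u$ is a minimiser; it then solves $\text{MA}(u)=(L^n)\nu$ by the differentiability of $E\circ P$; finally uniqueness, together with $u=\psi$ $\mu$-a.e., forces $u=\psi$. This is the same ``two minimisers solve the same equation, hence differ by a constant'' mechanism the paper itself uses in Step 3 of the proof of Theorem \ref{thm:optimaltransport}.

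Your fallback also closes without any comparison principle. The identities $\int(u-\psi)\,\text{MA}(u)=\int(u-\psi)\,\text{MA}(\psi)=0$ give $\int(u-\psi)(\text{MA}(\psi)-\text{MA}(u))=0$, and this is the equality case in the Yuan--Zhang uniqueness argument, which already forces $u-\psi$ to be constant.
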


	\subsection{Relative volume interpretation}\label{sect:relativevolume}

	Boucksom-Eriksson \cite{Boucksomnew1}  interpretated $E(\phi)$ in terms of relative volume as follows. Given two ultrametric norms $\norm{}, \norm{}'$ on the $K$-vector space $ H^0(X_K, lL)  $, the relative volume is
	\[
	vol(  \norm{}, \norm{}'  )=  \log \frac{  \det(\norm{}')  } {   \det(\norm{})   }.
	\]
	Concretely, if both $\norm{}$ and $\norm{}'$ are diagonal with respect to some NA orthogonal basis $s_1,\ldots s_N$ over the $K$-vector space $V$, namely
	\[
	\norm{ \sum a_i s_i}= \max_i |a_i| \norm{s_i}, \quad \norm{ \sum a_i s_i}'= \max_i |a_i| \norm{s_i}',\quad \forall a_i\in K,
	\]
	and $\norm{s_i}'= \lambda_i \norm{s_i}$, then $vol( \norm{}, \norm{}'    )= \log  \prod_i \lambda_i$.

	Given a continuous psh potentials $\phi, \psi \in \text{CPSH}(X_K^{an}, \mathcal{L})$, we can define a sequence of NA norms on the $K$-vector spaces $H^0(X_K, lL)$ for $l$ large enough,
	\[
	\norm{s}_{l\phi}= \sup_{X_K^{an}} |s|_{\mathcal{L}} e^{-l\phi},
	\]
	and similarly we define $\norm{s}_{l\psi}$. 
	By \cite[Theorem A]{Boucksomnew1}, the \emph{relative MA energy} admits the formula
	\[
	E(\phi)-E(\psi)= E(\phi, \psi)=  \lim_{l\to +\infty} \frac{n!}{ l^{n+1}} vol(   \norm{\cdot}_{l\phi}, \norm{\cdot}_{l\psi}     ) .
	\] 
	This determines $E(\phi)$ up to the choice of an additive constant.

	\subsection{Envelop and orthogonality property}\label{sec:envelop}

	Let $L\to X$ be a relatively ample line bundle, and 
	let $\mathcal{L}\to \mathcal{X}$ be a fixed model $\Q$-line bundle.
	For any continuous function $f: X_K^{an}\to \R$, we define the \emph{psh envelop} as 
	\[
	P(f):= \sup \{    \varphi:  \varphi\in \text{CPSH}(X_K^{an}), \mathcal{L}), \varphi\leq f        \}.
	\]
	Then $P(f)\in \text{CPSH}(X_K^{an}, \mathcal{L})$ and is the largest element of $ \text{CPSH}(X_K^{an}, \mathcal{L})$  dominated by $f$ \cite[Prop. 8.2]{Boucksomsemipositive}.

	We now recall the \emph{orthogonality property} for psh envelops, whose proof is based on the orthogonality of the asymptotic Zariski decomposition. It underpins the \emph{differentiability property for Monge-Amp\`ere energy}, hence is a crucial ingredient in the theory for NA MA equation.

	\begin{prop}\cite[Appendix A]{Boucksom}
For every  $f\in C^0(X_K^{an})$, we have
	\[
	\int_{X_K^{an}} (f- P(f)) \text{MA}( P(f)  )=0.
	\]
	\end{prop}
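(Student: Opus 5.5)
The plan is to reduce to model functions, where the statement becomes the orthogonality of the asymptotic (divisorial) Zariski decomposition, and then pass to the limit.

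\textbf{Step 1: reduction to model functions.} Model functions are dense in $C^0(X_K^{an})$. The envelope satisfies $|P(f)-P(g)|\leq \sup|f-g|$ directly from its definition, since $P(g)-\sup|f-g|$ is a competitor for $P(f)$. The NA MA operator is weakly continuous along uniformly convergent sequences in $\text{CPSH}(X_K^{an},\mathcal{L})$ (\cite[Cor. 3.5]{Boucksom}), and all the measures involved have total mass $(L^n)$. Hence if $f_j\to f$ uniformly with $f_j$ model functions, the integrals $\int (f_j-P(f_j))\text{MA}(P(f_j))$ converge to $\int (f-P(f))\text{MA}(P(f))$. So it suffices to treat $f$ a model function, given by a vertical $\Q$-divisor $D$ on some SNC model $\mathcal{X}$, which we may take to be the model carrying $\mathcal{L}$. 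Thus $\mathcal{L}+D$ is the model $\Q$-line bundle whose metric has potential $f$.

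\textbf{Step 2: identify $P(f)$ algebraically.} For a model function $f$, I would show that $P(f)$ is the uniform limit of the Fubini--Study potentials
\[
\phi_l=\tfrac1l\max_i \log|s_i|,
\]
where $\{s_i\}$ is a basis of $H^0(\mathcal{X}, l(\mathcal{L}+D))$, that is, of the sections $s$ of $lL$ with $\log|s|\leq lf$. The inequality $\phi_l\leq P(f)$ is immediate. For the reverse inequality and for uniform convergence I would use the characteristic-zero input of \cite{Boucksomsemipositive}: multiplier ideals together with Izumi/Lipschitz-type estimates on dual complexes, in the spirit of Lemma~\ref{lem:Lip}. This identifies $P(f)$ with the metric attached to the asymptotic positive part of $\mathcal{L}+D$. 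The function $f-P(f)\geq 0$ then records the asymptotic vanishing orders, i.e.\ the negative part. In particular, at a divisorial point $E_i$ of a higher model $\mathcal{X}'$, the value $(f-P(f))(E_i)/b_i$ is essentially $\text{ord}_{E_i}$ of the stable base locus of $\mathcal{L}+D$.

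\textbf{Step 3: orthogonality on models.} Approximate $P(f)$ by the model metrics $\phi_l$, realised on blow-ups $\mathcal{X}_l$ resolving the base ideals $\mathfrak{b}_l$ of $l(\mathcal{L}+D)$. On $\mathcal{X}_l$ we have $\mu^*(\mathcal{L}+D)=\mathcal{P}_l+F_l$, with $\mathcal{P}_l$ semipositive and $F_l=\frac1l(\text{fixed part})$ effective and vertical. Then
\[
\int (f-\phi_l)\,\text{MA}(\phi_l)=\sum_i b_i\,\text{ord}_{E_i}(F_l)\,(\mathcal{P}_l^n\cdot E_i)/b_i=(\mathcal{P}_l^n\cdot F_l).
\]
The key input is the orthogonality estimate for Fujita approximations, in the form of the Boucksom--Demailly--P\u{a}un--Peternell / BFJ inequality $(\mathcal{P}_l^{n}\cdot F_l)\to 0$. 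Concretely, it follows from the differentiability of the volume, with $\frac{d}{d\epsilon}\text{vol}$ given by the positive intersection product, combined with
\[
\text{vol}(\mathcal{L}+D)=\lim_l(\mathcal{P}_l^{n+1})
\]
on the relative setting over $\mathbb{D}$, via the Teissier-type inequalities used in \cite[Appendix A]{Boucksom}. Passing $l\to\infty$ with Step 2 and the continuity of MA gives the claim.

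\textbf{Main obstacle.} I expect Step 3, the vanishing $(\mathcal{P}_l^n\cdot F_l)\to 0$ in the relative (non-compact base) setting, to be the main difficulty, together with the uniform convergence in Step 2 that allows us to exchange limits. For Step 3 I would first try to adapt the Lazarsfeld--Musta\c{t}\u{a}/BFJ argument. One bounds $(\mathcal{P}_l^n\cdot F_l)^2$ by $(\mathcal{P}_l^{n+1})\big(\text{vol}(\mathcal{L}+D)-(\mathcal{P}_l^{n+1})\big)$, up to constants depending on a fixed ample model, and the right-hand side tends to $0$ by Fujita approximation.

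One shortcut is tempting but should be avoided. For $t\in[0,1]$ the functions $f_t=P(f)+t(f-P(f))$ all satisfy $P(f_t)=P(f)$, so energy differentiability would formally give the result. However, differentiability is itself derived from orthogonality, so this argument would be circular.
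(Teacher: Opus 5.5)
Your outline is essentially the argument behind the cited result of Boucksom--Favre--Jonsson; the paper itself gives no proof, only noting that it rests on the orthogonality of the asymptotic Zariski decomposition. The steps match: reduce to model functions by density and continuity of $P$ and $\text{MA}$; approximate $P(f)$ uniformly by the base-ideal Fubini--Study metrics $\phi_l$, using the characteristic-zero continuity of envelopes; rewrite $\int (f-\phi_l)\,\text{MA}(\phi_l)$ as $(\mathcal{P}_l^n\cdot F_l)$; and make this tend to zero by a relative orthogonality estimate of BDPP/BFJ type. When you write out that last step, note that on a model over $\text{Spec}(R)$ the quantities $(\mathcal{P}_l^{n+1})$ and $\text{vol}(\mathcal{L}+D)$ are not defined. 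The estimate must therefore be phrased with relative volumes, i.e. energy differences such as $E(P(f))-E(\phi_l)$, with constants kept independent of $l$ by expressing everything through fixed data on the base model (for instance, bounding vertical divisors there by multiples of $\mathcal{X}_0$).
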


	We will use the orthogonality property through the following consequence.
	
	\begin{cor}\label{cor:orthogonality}
	\cite[proof of Cor. 7.3]{Boucksom}  Let $\phi\in \text{CPSH}(X_K^{an}, \mathcal{L})$, and denote $\mu= \text{MA}(\phi)$. Let $f$ be any given model function on $X_K^{an}$. Then for small $\tau$, 
	\[
	\mu( \{      P(\phi+\tau f) <  \phi+\tau  f        \}   ) \leq C |\tau|,
	\]
for some constant $C$ independent of $\tau$. 
	\end{cor}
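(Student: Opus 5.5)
The plan is to compare $\phi$ with $\psi_\tau:=P(\phi+\tau f)$ using the concavity of the Monge--Amp\`ere energy $E$ together with the orthogonality property. I treat $\tau>0$; $\tau<0$ is the same with $f$ replaced by $-f$. Write $g_\tau=\phi+\tau f$ and $h_\tau=g_\tau-\psi_\tau\geq 0$.

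First, two elementary facts. Since $\phi-|\tau|\sup|f|$ is psh and lies below $g_\tau$, we have $\phi-|\tau|\sup|f|\leq\psi_\tau\leq g_\tau$. Hence $\norm{\psi_\tau-\phi}_{C^0}\leq|\tau|\sup|f|$. Second, $\tau\mapsto\psi_\tau$ is concave pointwise: a convex combination of competitors for $g_{\tau_1}$ and $g_{\tau_2}$ is a competitor for the corresponding convex combination of the $g$'s. So $\tau\mapsto h_\tau(x)$ is convex, nonnegative, and vanishes at $\tau=0$. In particular $h_\tau(x)/\tau$ is nondecreasing in $\tau>0$, so the sets $\{h_\tau>0\}$ increase with $\tau$.

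Next, the integral estimate. Concavity of $E$ gives two inequalities. The first is $E(\psi_\tau)\geq E(\phi)+\int(\psi_\tau-\phi)\,\text{MA}(\psi_\tau)$. By orthogonality, $\int h_\tau\,\text{MA}(\psi_\tau)=0$, so its right side equals $E(\phi)+\tau\int f\,\text{MA}(\psi_\tau)$. The second is
\[
E(\psi_\tau)\leq E(\phi)+\int(\psi_\tau-\phi)\,\text{MA}(\phi)=E(\phi)+\tau\int f\,\text{MA}(\phi)-\int h_\tau\,\text{MA}(\phi).
\]
Subtracting,
\[
\int h_\tau\, d\mu\leq \tau\int f\,(\text{MA}(\phi)-\text{MA}(\psi_\tau)).
\]
Because $f$ is a model function, $\int f\,\text{MA}(\cdot)$ is an intersection number against a vertical divisor. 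After reducing to model metrics by the approximation in Section \ref{sec:NAMAeqn}, integration by parts shows it is Lipschitz in the $C^0$ norm of the potential. With the first step this gives $\int h_\tau\,d\mu\leq C\tau^2$.

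The main obstacle is upgrading this integral bound to a bound on the measure of $\{h_\tau>0\}$. On that set $h_\tau$ could a priori be arbitrarily small, so Chebyshev's inequality gives nothing directly. My first attempt would use the convexity in $\tau$ together with a doubling comparison. Apply the integral estimate at parameter $2\tau$, and compare $h_{2\tau}$ with $h_\tau$ on $\{h_\tau>0\}$ using the fact that $f$ is a model function. The hope is that $f$ is piecewise affine on faces of a dual complex carrying $\mu$, and $\psi_\tau$ is convex and Lipschitz there by the equicontinuity proposition. Then $h_{2\tau}-2h_\tau$ should be bounded below by a fixed multiple of $\tau$ on the points where the envelope genuinely detaches, giving $\tau\,\mu(\{h_\tau>0\})\lesssim\int h_{2\tau}\,d\mu\leq C\tau^2$. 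If that pointwise gain cannot be obtained, I would fall back on a BFJ-style argument. There one uses the differentiability of $\tau\mapsto E(\psi_\tau)$ at $0$, with derivative $\int f\,d\mu$, applied to a perturbation $f+\epsilon u$, where $u$ is a model function chosen to be positive on the detachment set, and then optimises in $\epsilon\sim\tau$.
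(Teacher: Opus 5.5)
Your first half is correct. The pointwise concavity of $\tau\mapsto P(\phi+\tau f)$, the two energy inequalities combined with orthogonality, and the Lipschitz bound $|\int f\,(\text{MA}(\psi_1)-\text{MA}(\psi_2))|\leq C_f\norm{\psi_1-\psi_2}_{C^0}$ do give $\int h_\tau\,d\mu\leq C\tau^2$. The paper itself offers no argument beyond importing the statement from BFJ as a consequence of orthogonality.

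\textbf{The gap.} The step you flag as the main obstacle is a genuine gap, and the doubling idea cannot close it. On the detachment set $h_\tau$ is typically of order $\tau^2$, while that set has $\mu$-mass of order $\tau$. This already happens for convex envelopes on an interval. Take $\phi=\frac{\rho}{2}x^2$ (so $\mu=\rho\,dx$) and $f=-a|x|$. Then on $|x|\leq a\tau/\rho$ one has $P(\phi+\tau f)\equiv-\frac{a^2\tau^2}{2\rho}$ and $h_\tau=\frac{\rho}{2}(|x|-\frac{a\tau}{\rho})^2$. Hence $\mu(\{h_\tau>0\})=2a\tau$, whereas $\int h_{2\tau}\,d\mu=\frac{8a^3\tau^3}{3\rho}$.

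So $h_{2\tau}-2h_\tau\geq c\tau$ fails, and $\tau\,\mu(\{h_\tau>0\})\leq C\int h_{2\tau}\,d\mu$ is false. No Chebyshev-type upgrade of a bound on $\int h_\tau\,d\mu$ can give a first-order bound on the measure. Your heuristic also presumes that $\mu$ lives on a dual complex, which is not given for general $\phi$. The fallback is not yet an argument either. Differentiability at $\tau=0$ of a one-parameter family, with $u$ depending on the $\tau$-dependent detachment set, gives only $o(\tau)$ information with no uniform rate.

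\textbf{The missing idea.} You need a first-order mass balance, and your own ingredients suffice. For $\delta>0$, set $w=\min(1,\max(0,h_\tau/\delta-1))$. This is continuous, with $0\leq w\leq1$, $w=1$ on $\{h_\tau\geq2\delta\}$ and $w=0$ on $\{h_\tau\leq\delta\}$. Let $G(t,s)=E(P(\phi+tf+sw))$, which is concave. Differentiability of $E\circ P$ (BFJ's consequence of orthogonality) gives three facts:
\begin{enumerate}
\item $\partial_sG(0,0)=\int w\,d\mu$.
\item $\partial_sG(\tau,0)=\int w\,\text{MA}(\psi_\tau)=0$, because orthogonality forces $\text{MA}(\psi_\tau)$ to be carried by $\{h_\tau=0\}$.
\item $\partial_tG(t,s)=\int f\,\text{MA}(P(\phi+tf+sw))$, and your Lipschitz bound gives $|\partial_tG(t,s)-\partial_tG(t,0)|\leq C_f s$.
\end{enumerate}
Hence
\[
\bigl[G(0,s)-G(0,0)\bigr]-\bigl[G(\tau,s)-G(\tau,0)\bigr]=-\int_0^\tau\bigl(\partial_tG(t,s)-\partial_tG(t,0)\bigr)\,dt\leq C_f\tau s .
\]
Dividing by $s$ and letting $s\to0^+$ gives $\mu(\{h_\tau\geq2\delta\})\leq\int w\,d\mu\leq C_f\tau$. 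Letting $\delta\to0$ gives the claim, with $C_f$ depending only on $f$. This is the rigorous form of the heuristic that $\text{MA}(\psi_\tau)$ has full mass on the contact set, where it can exceed $\mu$ only by $O(\tau)$. That mechanism is what your proposal lacks.
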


\subsection{Weak comparison property, metric SYZ conjecture}\label{sec:comparisonproperty}

Suppose $(X,L)\to \mathbb{D}^*$ is a polarised \emph{maximal degeneration} of compact CY manifolds. We fix a reference model $\Q$-line bundle $\mathcal{L}$, and let $\phi_0\in  \text{CPSH}(X_K^{an}, \mathcal{L})$ be the NA CY potential.
We recall the weak comparison property in Def. \ref{Def:weakcomparison}.

		\begin{Def}
	An SNC model satisfies the \emph{weak comparison property} for the NA CY potential $\phi_0$, if there is an open subset $U\subset Sk(X)\subset  \Delta_{\mathcal{X}}$ with full Lebesgue measure $\mu_0(U)=1$, such that $\phi_0 =\phi_0\circ r_{\mathcal{X}} $ on $r_\mathcal{X}^{-1}(U)$ under the retraction map $r_{\mathcal{X}}: X_K^{an}\to \Delta_{\mathcal{X}}$.
\end{Def}

\begin{rmk}\label{rmk:blowup}
	The weak comparison property is preserved under smooth blow ups and blow downs for SNC models. If the blow up centre is not one of the intersection strata $E_J=\cap_{i\in J} E_i$ for divisor components $E_i$, then the essential skeleton $Sk(X)$ is unaffected, and the retraction map $r_{\mathcal{X}}: X_K^{an}\to \Delta_{\mathcal{X}}$ is unaffected over the open $n$-dimensional faces of $Sk(X)\subset \Delta_{\mathcal{X}}$. 
Otherwise $\Delta_{\mathcal{X}}$ is subdivided as a simplicial complex, but the retraction map $r_{\mathcal{X}}: X_K^{an}\to \Delta_{\mathcal{X}}$ remains the same. Thus each blow up and blow down can only affect the retraction map over a closed subset of $Sk(X)$ with Lebesgue measure zero.
\end{rmk}

\begin{thm}\cite{LiNA}
	Suppose the weak comparison property is verified for some semistable SNC model $\mathcal{X}$, then the metric SYZ conjecture holds for the degeneration family $\pi: X\to \mathbb{D}^*$ . 
\end{thm}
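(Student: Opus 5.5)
The plan is to follow the strategy of \cite{LiNA}: pass from the non-archimedean statement to a real Monge--Amp\`ere equation on the open faces of $Sk(X)$, find a large region where its solution is smooth and strictly convex, and then perturb the semi-flat picture on $X_t$ to an honest special Lagrangian fibration there.

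\textbf{Step 1: from NA MA to real MA.} Work in the given semistable SNC model $\mathcal{X}$ and consider an open $n$-dimensional face $\Delta_J\subset Sk(X)$. Put $u:=\phi_0\circ emb_{\mathcal{X}}$ on $U\cap \Delta_J$, where $U$ is the open set supplied by the weak comparison property. By the equicontinuity proposition, $u$ is convex and Lipschitz on each face. Since $\phi_0=\phi_0\circ r_{\mathcal{X}}$ on $r_{\mathcal{X}}^{-1}(U)$, Vilsmeier's comparison result identifies the restriction of $\text{MA}(\phi_0)$ to $U$ with the real Monge--Amp\`ere measure of $u$, up to a dimensional constant. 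Combined with $\text{MA}(\phi_0)=(L^n)\mu_0$ and (\ref{measureconvergence}), this shows that $u$ is an Alexandrov solution of
\[
\det D^2u = c \quad\text{on each component of } U\cap\Delta_J,
\]
with a constant $c>0$, in the integral affine coordinates $x_1,\dots,x_n$.

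\textbf{Step 2: regularity of $u$.} By Caffarelli's theory for Alexandrov solutions with density pinched between positive constants, $u$ is smooth and strictly convex away from a closed singular set $S\subset U$. Here $S$ is the union of the maximal segments along which $u$ fails to be strictly convex. I expect showing that $\mu_0(S)=0$ to be the main obstacle. The approach I would try is to use that the Monge--Amp\`ere measure of $u$ has a positive density. Along a segment on which $u$ is affine, Caffarelli's results force the segment to run out to the boundary of the domain. Using this, together with the smallness of the image of the gradient of $u$ over such segments, one can show that the set where strict convexity fails carries no Monge--Amp\`ere mass, and therefore has Lebesgue measure zero. Given this, for any $\delta$ I choose a compact set $K\subset U\setminus S$, contained in the open $n$-faces, with $\mu_0(K)\ge 1-\delta/2$.

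\textbf{Step 3: back to $X_t$.} Over $\text{Log}_{\mathcal{X}}^{-1}(K)$, the $z_i$ for $i\in J$ give coordinates on a region that is a torus bundle over a neighbourhood of $K$. By \cite{Litheta}, the normalised CY potentials converge in the $C^0$-hybrid sense to $\phi_0$. In rescaled coordinates, this means the CY potential is $C^0$-close to $u\circ \text{Log}_{\mathcal{X}}$, up to the $S^1$-invariance deviation, and $u\circ \text{Log}_{\mathcal{X}}$ is exactly the potential of the semi-flat metric. I then upgrade this to $C^{k,\alpha}$-closeness on slightly smaller compact sets. Since $u$ is smooth and uniformly convex on $K$, the complex MA equation is a small $C^0$-perturbation of a uniformly elliptic smooth solution. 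Savin's small perturbation theorem, applied on universal covers of the torus fibres, gives interior $C^{2,\alpha}$ closeness, and bootstrapping gives $C^k$ closeness.

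\textbf{Step 4: special Lagrangian fibration.} The semi-flat torus fibration is special Lagrangian for the semi-flat structure, and it has uniformly controlled geometry on $K$. An implicit function theorem argument (as in Zhang's perturbation result) then deforms it to a special Lagrangian $T^n$-fibration for $(X_t,\omega_{CY,t},\Omega_t)$ over a neighbourhood of $K$. Finally, by Theorem \ref{Volumeconvergence}, the $\mu_t$-measure of $\text{Log}_{\mathcal{X}}^{-1}(K)$ tends to $\mu_0(K)\ge 1-\delta/2$, which gives measure at least $1-\delta$ for small $t$.
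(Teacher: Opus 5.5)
Your outline is the paper's argument: Vilsmeier's comparison gives an Alexandrov solution of a constant-density real Monge--Amp\`ere equation on $U\cap\Delta_J^0$, a null singular set is removed, then come the $C^0$-hybrid convergence of \cite{Litheta}, Savin's small perturbation theorem and Zhang's perturbation. There is one genuine difference, in Step~2. The paper simply cites Mooney's partial regularity theorem (the non-strictly-convex set has Hausdorff dimension at most $n-1$). You instead derive Lebesgue-nullity from Caffarelli's structure theorem. That works and uses less, but your phrase about the ``smallness of the image of the gradient'' should be made precise as follows. If $x\in S$, Caffarelli's theorem, applied in a small ball around $x$, says $x$ is not an extremal point of the non-singleton contact set $\{u=\ell\}$. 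So $x$ lies in the relative interior of a segment $[a,b]$ on which $u$ is affine, and then every $q\in\nabla u(x)$ is also a subgradient at $a\neq x$. Hence $\nabla u(S)$ lies in the set of slopes that are subgradients at two distinct points. That set is Lebesgue-null, because its elements are non-differentiability points of the Legendre transform $u^*$, which is a finite convex function on $\R^n$. Therefore $c\,|S|=\text{MA}_\R(u)(S)=|\nabla u(S)|=0$. Only this nullity is used afterwards, in the paper as well, so Mooney's dimension bound is more than is needed.

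Two justifications in Steps~3--4 need fixing.

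\emph{First}, $C^0$-hybrid convergence alone does not show that the CY potential is uniformly close to $u\circ\text{Log}_{\mathcal X}$. Points $z\in X_t$ with $\text{Log}_{\mathcal X}(z)\to x$ may converge in the hybrid space to points of $r_{\mathcal X}^{-1}(x)$ other than $x$. For example, when $x_0=x_1$, take sequences along which $|z_0-z_1|$ is much smaller than $|z_0|$. So hybrid convergence only compares the CY potential with $\phi_0$ on these fibres. You must use $\phi_0=\phi_0\circ r_{\mathcal X}$ on $r_{\mathcal X}^{-1}(U)$ a second time here, as the paper does. A compactness argument in the hybrid space then gives the uniform $C^0$ estimate on $\text{Log}_{\mathcal X}^{-1}$ of a neighbourhood of $K$.

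\emph{Second}, Theorem~\ref{Volumeconvergence} is a weak convergence statement. For the compact set $K$ it only gives $\limsup_{t}\mu_t(\text{Log}_{\mathcal X}^{-1}(K))\leq\mu_0(K)$, which is the wrong inequality. Use instead the locally uniform convergence of densities (\ref{measureconvergence}) on compact subsets of the open faces, or work with an open set of large measure, as the paper does with $U_\epsilon$.
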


We now give a sketchy exposition for the proof strategy, taking some shortcut via the $C^0$-convergence for CY potential \cite[Thm. 1.2]{Litheta}.

\begin{enumerate}
	\item  Without loss the open set $U$ is contained in the union of the open $n$-dimensional faces $\Delta_J^0\subset Sk(X)$.   By a result of C. Vilsmeier \cite{Vilsmeier}, under the weak comparison property, then on $\Delta_J^0\cap U$, the pushforward of the NA MA measure equals the real Monge-Amp\`ere measure of the convex function $\phi|_{\Delta_J^0}$ up to a factor $n!$,
	\[
 \text{MA}_\R (\phi_0 |_{\Delta_J^0 \cap U} )  =\frac{1}{n!}	r_{\mathcal{X}* } \text{MA}( \phi_0 )= \frac{(L^n)}{n!}\mu_0  = \frac{(L^n)}{n!} C_0 dx_1\ldots dx_n.
	\]
	By the regularity theory for the real Monge-Amp\`ere equation due to C. Mooney, the solution $\phi_0$ is smooth away from a closed set of Hausdorff dimension at most $n-1$ inside $U\cap \Delta_J^0$. We can delete this singular set from $U$, so that 
	 $\phi_0$ is a smooth solution to the \emph{real Monge-Amp\`ere equation} 
	\begin{equation}
   \det(D^2 \phi_0)=  \frac{C_0}{n!} (L^n).
	\end{equation}

	\item  For any small $\epsilon>0$, we let 
$
	U_\epsilon =\{     x\in U: \text{dist}( x, \partial U )>\epsilon      \}, 
$
so $U_\epsilon$ is properly contained in $U$. 
	By fixing $\epsilon$ small enough depending on $\delta$, we can ensure that $\mu_0(U_\epsilon) > 1-\delta/2$.

	As in Section \ref{section:essentialskeleton}, near the depth-$(n+1)$ intersection strata $E_J$, we have local holomorphic coordinates $z_0\ldots z_n$ subject to $t=z_0\ldots z_n$, and we write 
	\[
		\text{Log}_{\mathcal{X}} (z)=(x_0,\ldots x_k):=  \frac{1}{\log |t|}(   \log |z_0|, \ldots \log |z_k|      ),
	\]
Using the measure convergence caluclation in Section \ref{section:essentialskeleton}, the set $\text{Log}_\mathcal{X}^{-1} (U_\epsilon)\subset X_t$ has $\mu_t$-measure at least $1-\delta$.

	\item 
Applying the $C^0$-hybrid convergence result in \cite{Litheta}, and using that $\phi=\phi\circ r_\mathcal{X}$ over $U\subset Sk(X)$, we can write the CY metrics $(X_t, \omega_{CY,t})$ on the subset $\text{Log}_\mathcal{X}^{-1} (U_{\epsilon/3})\subset X_t$  as $\omega_{CY,t}= dd^c \phi_{CY,t}$ for some suitable local potential $\phi_{CY,t}$, such that for the fixed small $\epsilon>0$,
\[
\norm{ \phi_{CY,t}-  \phi_0\circ \text{Log}_\mathcal{X} }_{C^0(   \text{Log}_\mathcal{X}^{-1} (U_{\epsilon/3})  )} \to 0, \quad t\to 0.
\]
The CY metric condition can be written as the PDE
\[
(dd^c \phi_{CY,t} )^n= \omega_{CY,t}^n= \frac{ (L^n) }{ |\log |t||^n   } \mu_t.
\]

	\item  Using the real Monge-Amp\`ere equation on $\phi_0$, we can pass to a local universal cover of $\text{Log}_\mathcal{X}^{-1} (U_{\epsilon/3})$, and apply the Savin small perturbation theorem as in \cite[Thm. 4.8]{LiNA}, to improve the $C^0$-convergence to $C^\infty$-convergence on the shrunken region $\text{Log}_\mathcal{X}^{-1} (U_{\epsilon/2})$,
	\[
	\norm{ \phi_{CY,t}-  \phi_0\circ \text{Log}_\mathcal{X} }_{C^{k+2}(   \text{Log}_\mathcal{X}^{-1} (U_{\epsilon/2})  )} \to 0, \quad t\to 0.
	\]
	for an arbitrarily big $k\in \N$. Thus we obtain the $C^\infty$ metric convergence
	\[
	\norm{ \omega_{CY,t}-    dd^c (\phi_0\circ \text{Log}_\mathcal{X}) }_{C^k(   \text{Log}_\mathcal{X}^{-1} (U_{\epsilon/2})  )} \to 0, \quad t\to 0.
	\]

	\item In the semiflat model metric $  dd^c (\phi_0\circ \text{Log}_\mathcal{X})$, the map $\text{Log}_\mathcal{X}$ is a special Lagrangian $T^n$-fibration. When $t$ is sufficiently small depending on $\delta$, 
	by a result of Y. G. Zhang based on the implicit function theorem, we can make a $C^\infty$-small perturbation to obtain a special Lagrangian $T^n$-fibration, on a subset of $\text{Log}_\mathcal{X}^{-1} (U_{\epsilon/2}) \subset X_t $ that contains $\text{Log}_\mathcal{X}^{-1} (U_{\epsilon})  $ (See \cite[Thm. 4.9]{LiNA}).

	The upshot is that there is a special Lagrangian $T^n$-fibration on some open subset of $X_t$, which has $\mu_t$-measure at least $1-\delta$. The metric SYZ conjecture follows.

\end{enumerate}

	\section{Valuative independence and potential theory}\label{sec:valuativeindependence}

From henceforth $(X,L)\to \mathbb{D}^*$ is a polarised maximal degeneration of CY manifolds. Without loss we can take a semistable model $\mathcal{X}$, together with a reference model line bundle $\mathcal{L}\to \mathcal{X}$, so that  continuous semipositive metrics $\norm{\cdot}= \norm{\cdot}_{\mathcal{L}} e^{-\phi}$ on $L\to X_K^{an}$ can be viewed as potentials $\phi\in \text{CPSH}(X_K^{an}, \mathcal{L})$.

In this Section, we will assume the existence of  the valuative independent basis, and derive consequences on the class of NA semipositive potentials.

\begin{rmk}
	Many ideas here have been foreshadowed in \cite[Section 5]{Litheta}. Whereras in \cite[Section 5]{Litheta} we are given a parameter space $B$ for the cost function $c(x,p): Sk(X)\times B\to \R$, carrying an a priori polyhedral structure, here the analogue for $B$ has to be built from scratch.
\end{rmk}

	\subsection{Valuative independence}\label{sect:valuativeindependence2}

	Let $l\geq 0$, and let $\{  \theta_\alpha^l \}\subset H^0(X_K, lL)$ be a basis over the discretely valued field $K=\C(\!(t)\!)$. Using some local trivialisation $\tau$ provided by $\mathcal{L}$, we can regard $\theta_\alpha^l$ as local meromorphic functions $\theta_\alpha/\tau$. We view the points on $x\in Sk(X)\subset X_K^{an}$ as valuations. The valuative independence condition in Def. \ref{Def:valuativeindependence} reads
	\[
		val_x(\sum_\alpha a_\alpha \theta^l_\alpha/\tau)= \min_{\alpha: a_\alpha\neq 0}  (val(a_\alpha) + val_x(\theta^l_\alpha/\tau) ),\quad \forall a_\alpha\in K.
\]
Notice that the valuative independence condition is unaffected if we change the trivialising section $\tau$ to $\tau'$, because $val_x(\tau'/\tau)$ would cancel out on both sides. Thus we will omit $\tau$ from the notation.
Viewing valuations as norms, this is equivalent to
	\begin{equation}
	| \sum_\alpha a_\alpha \theta_\alpha^l  | (x)=  \max_\alpha |a_\alpha |  |\theta_\alpha^l| (x).
	\end{equation}

We now explain the characterisation for valuative independence in \cite[Section 3.2]{Litheta}, in the special case of maximal degenerations $\dim Sk(X)=n$. Let $E_J=\cap_{i\in J} E_i$ be a depth $n+1$ intersection of SNC divisor components $E_i$ on the central fibre, such that $E_i$ achieve the minimum value of the log discrepancy, hence the corresponding simplex $\Delta_J\subset \Delta_{\mathcal{X}}$ is an $n$-dimensional face of $Sk(X)\subset \Delta_{\mathcal{X}}$. 
Let $z_i$ be the local equations of the divisors $E_i$, and we can arrange $t=z_0\ldots z_n$ by the semistable SNC condition. Using the local trivialisation of $\mathcal{L}$, we can regard sections $s$ of $L$ as local meromorphic functions, and Taylor expand
\[
s=\sum_{ \beta\in \Z^{m+1} } c_\beta z^\beta, \quad c_\beta\in \C,
\]
where only finitely many terms contain negative exponents.
The monomial valuation of $s$ at any point $x\in \Delta_J\subset Sk(X)$ is
\[
-\log 
|s|(x)=val_x(s) = \min \{    \langle x,\beta \rangle |  c_\beta \neq 0  \}. 
\]
Consequently, on any given face $\Delta_J$, the function $\log |s|$ is the maximum of a finite collection of integral affine linear functions, and in particular is piecewise affine linear.

For any fixed $l\geq 1$, and any nonzero section $s$ of $L^{\otimes l}\to X_K^{an}$, 
the valuation functions $\log |s|$ has a uniform Lipschitz estimate independent of $s$ by Lemma \ref{lem:Lip}, so modulo adding an integer constant, there are only a finite number of possibilities for these integral affine linear functions. In particular, there is a finite union of codimension one rational walls $\mathcal{S}_l\subset Sk(X)$ which contains all the lower dimensional faces of $Sk(X)$, together with the corner locus of $\log |s|$ for all such  $s$. We denote the connected components of $Sk(X)\setminus \mathcal{S}_l$ as $O_a$, which are open polyhedral domains inside the $n$-dimensional open faces $\Delta_J^0= \text{Int}(\Delta_J)$.

On each $O_a$, the valuation functions $\log |\theta_\alpha^l |(x)=-\langle x, \beta_\alpha\rangle $ are affine linear, and correspondingly only one monomial term $c_{\alpha, \beta} z^{\beta_\alpha}$ dominates all the other terms in the Taylor expansion of $\theta^l_\alpha$.  We partition the exponents $\beta \in \Z^{n+1}$ into equivalence classes modulo integer multiples of $(1,1\ldots 1)\in \Z^{n+1}$.

\begin{lem}\cite[Lemma 3.6]{Litheta}
	\label{lem:valuativeindependence2} For fixed $l\geq 0$, 
	suppose $\{ \theta_\alpha^l \}$ is a valuative independent $K$-basis of $H^0(X_K, lL)$, then on any $O_a\subset Sk(X)\setminus \mathcal{S}_l$, the set of leading exponents $\{ \beta_\alpha\}$ are all distinct in $\Z^{n+1}/\Z(1,\ldots 1)$.

\end{lem}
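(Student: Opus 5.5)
The plan is to argue by contradiction. Suppose that on some chamber $O_a$ two distinct basis elements $\theta_\alpha^l \neq \theta_{\alpha'}^l$ have leading exponents $\beta_\alpha,\beta_{\alpha'}$ that agree in $\Z^{n+1}/\Z(1,\ldots,1)$. Then $\beta_{\alpha'}=\beta_\alpha+k(1,\ldots,1)$ for some $k\in\Z$. Since $t=z_0\cdots z_n$, this says that $t^{-k}\theta^l_{\alpha'}$ and $\theta^l_\alpha$ have the same leading monomial $z^{\beta_\alpha}$ on $O_a$. Their leading coefficients are nonzero complex numbers, say $c_{\alpha,\beta_\alpha}$ and $c'$.

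We form the $K$-linear combination
\[
s=\theta_\alpha^l - \frac{c_{\alpha,\beta_\alpha}}{c'}\, t^{-k}\theta_{\alpha'}^l .
\]
Its coefficients $a_\alpha=1$ and $a_{\alpha'}=-\frac{c_{\alpha,\beta_\alpha}}{c'}t^{-k}$ are nonzero elements of $K$. By valuative independence (Def. \ref{Def:valuativeindependence}), at every $x\in O_a$ we get
\[
val_x(s)=\min\big(val_x(\theta^l_\alpha),\, -k+val_x(\theta^l_{\alpha'})\big)=\langle x,\beta_\alpha\rangle ,
\]
because both terms equal $\langle x,\beta_\alpha\rangle$, using $\langle x,(1,\ldots,1)\rangle=\sum x_i=1$ in the semistable case.

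Next I compute $val_x(s)$ directly from the Taylor expansions. The $z^{\beta_\alpha}$ terms cancel by the choice of constant. On $O_a$ every other monomial $z^\beta$ occurring in $\theta^l_\alpha$ or in $t^{-k}\theta^l_{\alpha'}$ satisfies $\langle x,\beta\rangle>\langle x,\beta_\alpha\rangle$ for all $x\in O_a$, since it is strictly dominated there by the unique leading term. Hence the monomial valuation gives $val_x(s)>\langle x,\beta_\alpha\rangle$ at every point of $O_a$, which is the desired contradiction. One should also check that $s\neq 0$: this holds because the $\theta^l$ form a basis and $\alpha\neq\alpha'$.

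The step needing care is the strict inequality after cancellation. There are infinitely many monomials in the expansions, so one must check that the minimum over the remaining terms is strictly larger than $\langle x,\beta_\alpha\rangle$ and not merely an infimum approaching it. This follows because the exponents are bounded below (only finitely many negative exponents occur), so the set $\{\langle x,\beta\rangle\}$ is discrete and its minimum is attained. Alternatively, it suffices to pick a single point $x\in O_a$ with rationally independent coordinates. There the leading term is strictly unique, and removing it raises the valuation strictly. So a contradiction at one point is enough.
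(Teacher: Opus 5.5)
Your proposal is correct and follows the paper's own argument: you cancel the common leading monomial $z^{\beta_\alpha}$ using the combination $\theta_\alpha^l - (c_{\alpha,\beta_\alpha}/c')\,t^{-k}\theta_{\alpha'}^l$, so its valuation on $O_a$ strictly exceeds the minimum that valuative independence forces. Your extra checks — that the minimum is attained because the exponents are bounded below and $x$ lies in the open face, and that $s\neq 0$ — just spell out details the paper leaves implicit.
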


\begin{proof}
Suppose for $\alpha, \alpha'$, their leading exponents $\beta, \beta'$ fall within the same equivalence class.  The Taylor expansion for $\theta_\alpha^l$ and $\theta_{\alpha'}^l$ have leading terms $c_{\alpha, \beta} z^\beta $ and $c_{\alpha', \beta'} z^{\beta'}$ respectively, where $  c_{\alpha, \beta} ,  c_{\alpha', \beta'} \in \C^*$ and   $\beta'- \beta= k(1,\ldots 1)$ for some $k\in \Z$. Thus
in the Taylor expansion for the linear combination $c_{\alpha', \beta'} \theta_\alpha^l - c_{\alpha, \beta} t^{-k} \theta_{\alpha'}^l$, the coefficient for the leading term $z^\beta$ cancels, hence at any point $x\in O_a$,
\[
val_x (    c_{\alpha', \beta'} \theta_\alpha - c_{\alpha, \beta} t^{-k} \theta_{\alpha'}^l    ) > \min( val_x(\theta_\alpha^l),  val_x(\theta_{\alpha'}^l) + val_x(  t^{-k} ) ),
\]
violating the valuative independence condition. 
\end{proof}

	\begin{rmk}
	Suppose we have two sets of valuative independent $K$-basis $\{  \theta_\alpha^l  \}$ and $\{ \tilde{\theta}_{\alpha'}^l \}$ for $H^0(X_K, lL)$. Denote $N(l)=\dim_K H^0(X_K, lL)$.
The relation between the two bases is described in  \cite[Remark 3.8]{Litheta}. The transition matrix $A\in GL(N(l), K)$ is defined by 
$
\theta_\alpha^l = \sum_{\alpha'} A_{\alpha \alpha'} \tilde{\theta}_{\alpha'}^l.
$
Up to multiplying $\tilde{\theta}_{\alpha'}^l$ by suitable powers of $t$, we can find a bijective matching between the index sets $\alpha, \alpha'$, such that $val_x(\theta_\alpha^l)= val_x(\tilde{\theta}_{\alpha'}^l)$ for all $x\in   Sk(X)$. Furthermore, there is an invertible diagonal matrix $\bar{A}_{\alpha \alpha'}\in GL(N(l), \C)$, such that the transition matrix $\bar{A}^{-1}A$ is a perturbation of the identity that does not change the leading order Taylor expansion terms of $\{  \theta_\alpha^l \}$.

We will sometimes refer to $\log |\theta_\alpha^l|: Sk(X)\to \R$ as \emph{tropical theta functions}.
	These are canonical up to additive constants (corresponding to multiplying $\theta_\alpha^l$ by powers of $t$).
	\end{rmk}

	\subsection{Cost function, c-transform}\label{sec:costfunction}

For each $l\geq 1$, let $\{  \theta_\alpha^l  \}$ be a valuative independent $K$-basis of $H^0(X_K, lL)$. 
For any given parameter $y\in Sk(X)$, we consider the set of functions on $Sk(X)$,
\[
\{      \frac{1}{l}  \log |\theta_\alpha^l | (x)-   \frac{1}{l}  \log |\theta_\alpha^l | (y) \}_{l,\alpha}.
\]
By construction, these functions take the value zero at the point $y\in Sk(X)$, and restrict to convex functions on the faces of $Sk(X)$. 
By the uniform Lipschitz estimate in Lemma \ref{lem:Lip}, this set is precompact in $C^0(Sk(X))$.

We define the set of \emph{limiting tropical theta functions} $B_y$ to be the set of all subsequential $C^0$-limits of these functions, as $l\to +\infty$. 
For any $p\in B_y$, we denote the corresponding function on $Sk(X)$ as $c(x,p; y)$. The function $c(x,p;y): Sk(X)\times B_y\to \R$ is called the \emph{cost function}. It inherits the uniform Lipschitz estimate
\begin{equation}\label{eqn:costfunctionLipschitz}
\sup_{p\in B} |c(x,p; y)- c(x',p; y)|\leq C|x-x'|
\end{equation}
for any $x,x'$ on the same face of $Sk(X)$. Hence the set of functions $B_y$ is compact in $C^0$-topology.
 On each $n$-dimensional face of $Sk(X)$, the cost function is convex in $x$. The dependence on $y$ is very mild by the following simple observation, and we will sometimes drop $y$ in the notation.

	\begin{lem}\label{lem:changey}
		Given any $y, y'\in Sk(X)$, there is a canonical homeomorphism between $B_y$ and $B_{y'}$, by sending the function $c(\cdot ,p;y): Sk(X)\to \R$ to $c(\cdot, p; y)-c(y', p; y): Sk(X)\to \R$. 
	\end{lem}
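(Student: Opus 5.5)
The plan is to construct the map $B_y\to B_{y'}$ directly at the level of the approximating sequences, and then check that it is well defined, bijective and bicontinuous for the $C^0$-topology.

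\textbf{The map.} Fix $p\in B_y$. By definition, $c(\cdot,p;y)$ is a $C^0$-limit of a sequence
\[
f_k(x)=\frac{1}{l_k}\log|\theta^{l_k}_{\alpha_k}|(x)-\frac{1}{l_k}\log|\theta^{l_k}_{\alpha_k}|(y),\qquad l_k\to+\infty .
\]
The corresponding sequence normalised at $y'$ is
\[
g_k(x)=\frac{1}{l_k}\log|\theta^{l_k}_{\alpha_k}|(x)-\frac{1}{l_k}\log|\theta^{l_k}_{\alpha_k}|(y').
\]
The two sequences are related by $g_k=f_k-f_k(y')$. Uniform convergence $f_k\to c(\cdot,p;y)$ gives pointwise convergence $f_k(y')\to c(y',p;y)$. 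Hence $g_k$ converges in $C^0$ to $c(\cdot,p;y)-c(y',p;y)$, and this limit is by definition an element of $B_{y'}$. Call the resulting map $\Phi_{y,y'}:B_y\to B_{y'}$.

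\textbf{Well-definedness.} The image $\Phi_{y,y'}(p)$ is given by a formula involving only the limit function $c(\cdot,p;y)$, not the chosen subsequence. So $\Phi_{y,y'}$ is well defined on functions.

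\textbf{Bijectivity.} Define $\Phi_{y',y}$ symmetrically. For the composition,
\[
\Phi_{y',y}\circ\Phi_{y,y'}(h)=\big(h-h(y')\big)-\big(h(y)-h(y')\big)=h-h(y)=h,
\]
since every $h\in B_y$ vanishes at $y$. By symmetry the other composition is also the identity, so $\Phi_{y,y'}$ is a bijection.

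\textbf{Continuity.} In the $C^0$-topology,
\[
\big\|\Phi_{y,y'}(h)-\Phi_{y,y'}(h')\big\|_{C^0}\le 2\,\|h-h'\|_{C^0},
\]
so $\Phi_{y,y'}$ is $2$-Lipschitz, hence continuous. The same bound holds for the inverse $\Phi_{y',y}$, so $\Phi_{y,y'}$ is a homeomorphism.

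\textbf{Canonicity.} The maps are compatible under composition: $\Phi_{y',y''}\circ\Phi_{y,y'}=\Phi_{y,y''}$. This follows from the same computation as the bijectivity check. It justifies identifying all the $B_y$ with a single space $B$ and dropping $y$ from the notation.

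The only point needing any care is that the approximating sequences transfer. This reduces to uniform convergence implying convergence of the values at $y'$, so I expect no real obstacle in this lemma.
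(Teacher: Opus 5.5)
Your argument is correct and is essentially the argument the paper has in mind; the paper states this lemma without proof, calling it a ``simple observation''. The approximating sequences normalised at $y$ and at $y'$ differ by the constant $f_k(y')$, which converges by uniform convergence. The inverse is the same construction with $y$ and $y'$ swapped, using $h(y)=0$ for $h\in B_y$, and the bound $\|\Phi_{y,y'}(h)-\Phi_{y,y'}(h')\|_{C^0}\le 2\|h-h'\|_{C^0}$ gives bicontinuity.
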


	Given the cost function $c(x,p)$, we can define the \emph{$c$-transform} for bounded functions $\phi: Sk(X)\to \R$ and $\psi: B_y\to \R$:
	\begin{equation}\label{eqn:ctransform}
		\psi^c(x)= \sup_{p\in B_y}c(x,p) - \psi(p),\quad
		\phi^c(p)= \sup_{x\in Sk(X)} c(x,p) -\phi(x).
	\end{equation}
	This can be viewed as a generalisation of the Legendre transform. Formal arguments imply that

	\begin{lem}\label{lem:Pc}
		For any $\phi\in L^\infty(Sk(X))$ and $\psi\in L^\infty(B_y)$, we have $(\phi^c)^c\leq \phi$ and $(\psi^c)^c \leq \psi$. For $\phi_1,\phi_2\in L^\infty(Sk(X))$, we have $\norm{\phi_1^c-\phi_2^c}_{L^\infty}\leq \norm{\phi_1-\phi_2}_{L^\infty}$, and likewise for functions in $L^\infty(B_y)$.

		Moreover, for $\phi, \psi$ in the image of the $c$-transform, the $c$-transform is involutive: $(\phi^c)^c=\phi$ and $(\psi^c)^c=\psi$. We shall denote this class of functions $\phi$ as $\mathcal{P}_c$. By Lemma \ref{lem:changey}, the function class $\mathcal{P}_c$ is independent of the choice of $y\in Sk(X)$.
	\end{lem}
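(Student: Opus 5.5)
The proof only uses the definitions in \eqref{eqn:ctransform}, the elementary bound $c(x,p)\geq c(x,p)$ applied at the right point, and the monotonicity of the $c$-transform. Throughout, bounded functions on $Sk(X)$ and on $B_y$ are used so that all suprema are finite. This is legitimate because $c$ is bounded on the compact set $Sk(X)\times B_y$, by the Lipschitz bound \eqref{eqn:costfunctionLipschitz} and $c(y,p;y)=0$.

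First, the inequality $(\phi^c)^c\leq\phi$ (pointwise, with $\phi$ a bounded representative). For any $x\in Sk(X)$ and $p\in B_y$, take $x'=x$ in the supremum defining $\phi^c(p)$. This gives $\phi^c(p)\geq c(x,p)-\phi(x)$, that is, $c(x,p)-\phi^c(p)\leq \phi(x)$. Taking the supremum over $p$ yields $(\phi^c)^c(x)\leq\phi(x)$. The argument for $\psi$ is the same with the roles of $x$ and $p$ swapped.

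Second, the $L^\infty$ contraction. The map $\phi\mapsto\phi^c$ is order reversing: $\phi_1\leq\phi_2$ implies $\phi_1^c\geq\phi_2^c$. It also commutes with constants, since $(\phi+k)^c=\phi^c-k$. From $\phi_1\leq \phi_2+\norm{\phi_1-\phi_2}_{L^\infty}$ we get $\phi_2^c-\norm{\phi_1-\phi_2}_{L^\infty}\leq\phi_1^c$. By symmetry the sup norm of $\phi_1^c-\phi_2^c$ is at most $\norm{\phi_1-\phi_2}_{L^\infty}$. The same argument works on $B_y$.

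Third, involutivity on the image. Suppose $\phi=\psi^c$. From the first step, applied to $\psi$, we have $\psi^{cc}\leq\psi$. Applying the order-reversing transform gives $\psi^{ccc}\geq\psi^c$. On the other hand, the first step applied to $\phi=\psi^c$ gives $\psi^{ccc}\leq\psi^c$. Hence $\phi^{cc}=\psi^{ccc}=\psi^c=\phi$, and symmetrically for $\psi$ in the image of the transform from $Sk(X)$. The class $\mathcal{P}_c$ is therefore exactly the set of fixed points of the double transform.

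Finally, independence of $y$, which is the only point that needs some care. Under the homeomorphism of Lemma \ref{lem:changey}, the cost function changes by $c(x,p;y')=c(x,p;y)-g(p)$ with $g(p)=c(y',p;y)$, a function of $p$ alone. If $\phi=\psi^c$ with respect to $y$, then $\phi$ is also the $y'$-transform of $\psi-g$ transported to $B_{y'}$:
\[
\sup_p\big(c(x,p;y')-(\psi(p)-g(p))\big)=\sup_p\big(c(x,p;y)-\psi(p)\big).
\]
So the image of the $c$-transform on functions of $x$, which is $\mathcal{P}_c$, is the same for $y$ and $y'$. For this we need $\psi-g$ to still be bounded, which holds because $g$ is bounded (indeed continuous, since $B_y$ carries the $C^0$ topology). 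No step is a genuine obstacle; the only points to watch are boundedness of $c$, which is needed for the suprema to be finite, and this bookkeeping of $g$.
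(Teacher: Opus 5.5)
Your proof is correct and is the standard formal $c$-transform argument the paper has in mind; the paper supplies no details beyond ``formal arguments imply''. The steps are: the pointwise bound from taking $x'=x$; order reversal together with $(\phi+k)^c=\phi^c-k$ for the contraction; the sandwich $\psi^{ccc}=\psi^c$ for involutivity; and absorbing $g(p)=c(y',p;y)$ into $\psi$ for independence of $y$. Reading $L^\infty$ as bounded functions with the sup norm is the right interpretation, since the $c$-transform is a pointwise supremum and is not well defined on a.e.-equivalence classes.
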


	\begin{lem}\label{lem:uniformcontinuityPc}
		The functions $\phi\in \mathcal{P}_c$ have uniform Lipschitz continuity on $Sk(X)$, and are convex when restricted on any face of $Sk(X)$.
	\end{lem}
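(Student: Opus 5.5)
Since $\phi\in\mathcal{P}_c$, Lemma \ref{lem:Pc} gives $\phi=\psi^c$ with $\psi=\phi^c$, that is,
\[
\phi(x)=\sup_{p\in B_y}\bigl(c(x,p)-\psi(p)\bigr).
\]
The plan is to read both properties off this formula. A supremum of functions that are uniformly Lipschitz and convex on faces is again Lipschitz and convex on faces, provided the supremum is finite. So the work is to verify three things: the family $\{c(\cdot,p)-\psi(p)\}_{p\in B_y}$ is equi-Lipschitz and convex on faces, the supremum is finite, and the estimate passes from faces to all of $Sk(X)$.

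\textbf{Properties of the family.} For convexity, each $c(\cdot,p;y)$ is a $C^0$-limit of $\frac1l\log|\theta^l_\alpha|(\cdot)-\frac1l\log|\theta^l_\alpha|(y)$. On a face $\Delta_J$, each such function is a maximum of affine functions by the Taylor expansion formula $\log|s|(x)=-\min\{\langle x,\beta\rangle\}$, so it is convex. Convexity is preserved under uniform limits, so $c(\cdot,p)$ is convex on each face. Subtracting the constant $\psi(p)$ does not affect this, and a pointwise supremum of convex functions is convex. For the Lipschitz bound, (\ref{eqn:costfunctionLipschitz}) gives $|c(x,p)-c(x',p)|\le C|x-x'|$ on each face with $C$ independent of $p$, because it is inherited from Lemma \ref{lem:Lip}, whose constant is independent of $s$ and $l$. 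Taking the supremum over $p$ gives $|\phi(x)-\phi(x')|\le C|x-x'|$ on each face.

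\textbf{Finiteness.} The supremum is finite because $\psi=\phi^c$ is bounded. The cost $c$ is bounded on $Sk(X)\times B_y$, since $c(y,p)=0$, the Lipschitz bound holds, and $Sk(X)$ has finite diameter. Since $\phi$ is bounded, it follows that $\psi$ is bounded.

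\textbf{From faces to $Sk(X)$.} The step needing some care, though it is not a real obstacle, is passing from the Lipschitz estimate on each face to a global one on $Sk(X)$. I would use that $Sk(X)$ is a finite connected simplicial complex. For $x,x'$ in different faces, I join them by a piecewise linear path through adjacent faces, using the path metric. Since there are finitely many faces, this path metric is comparable to any fixed metric on $Sk(X)$, which gives a uniform constant $C'$. On components of $Sk(X)$, if any, the same argument applies componentwise.

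Finally, uniformity over the whole class holds because $C'$ depends only on the constant in Lemma \ref{lem:Lip} and the combinatorics of $Sk(X)$, not on $\phi$. By Lemma \ref{lem:changey}, the choice of $y$ is irrelevant.
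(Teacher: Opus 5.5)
Your proposal is correct and follows the paper's argument: the paper proves the lemma in one line, as a formal consequence of writing $\phi=(\phi^c)^c$ as a supremum of the functions $c(\cdot,p;y)-\phi^c(p)$, which are convex on faces and uniformly Lipschitz by (\ref{eqn:costfunctionLipschitz}). Your extra details are routine and fine: finiteness of the supremum, and chaining through adjacent faces using connectedness of $Sk(X)$. One small correction is that the path metric is bi-Lipschitz to the natural simplicial (piecewise-Euclidean) metric, not to literally any metric on $Sk(X)$.
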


	\begin{proof}
		This follows formally from the uniform Lipschitz estimate (\ref{eqn:costfunctionLipschitz}), and the convexity of $c(x,p;y)$ in the $x$-variable on any face of $Sk(X)$.
	\end{proof}

	\begin{lem}\label{lem:ctransformmax}
	For any $\phi\in \mathcal{P}_c$, we have 
	\[
	\begin{cases}
	\phi(x)= \max_{p\in B_y} c(x,p)- \phi^c(p)
	\\
	\phi^c(p)= \max_{x\in Sk(X)} c(x,p)- \phi(x).
	\end{cases}
	\]

	\end{lem}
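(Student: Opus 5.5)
The proof is a compactness argument: we show that the suprema defining the two $c$-transforms in (\ref{eqn:ctransform}) are attained. This reduces to the continuity of $c$ jointly in $(x,p)$, together with the compactness of $Sk(X)$ and of $B_y$.

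\textbf{Compactness of the two spaces.} $Sk(X)$ is a finite simplicial complex, hence compact. $B_y$ is topologised as a subset of $C^0(Sk(X))$ by identifying $p$ with the function $c(\cdot,p;y)$. It is closed, being a set of subsequential limits (a diagonal argument shows that a $C^0$-limit of elements of $B_y$ again lies in $B_y$). It is also precompact, by the uniform Lipschitz estimate (\ref{eqn:costfunctionLipschitz}) and Arzel\`a--Ascoli. Hence $B_y$ is compact, as already noted in the text.

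\textbf{Joint continuity of $c$.} For $(x,p),(x',p')$ we estimate
\[
|c(x,p)-c(x',p')|\leq |c(x,p)-c(x',p)|+\norm{c(\cdot,p)-c(\cdot,p')}_{C^0}.
\]
The first term is bounded by $C\,\mathrm{dist}(x,x')$, using (\ref{eqn:costfunctionLipschitz}) chained along faces of $Sk(X)$. The second term is exactly the distance from $p$ to $p'$ in the topology of $B_y$. Hence $c$ is continuous on $Sk(X)\times B_y$.

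\textbf{Continuity of $\phi$ and $\phi^c$.} Take $\phi\in\mathcal{P}_c$. Lemma \ref{lem:uniformcontinuityPc} shows that $\phi$ is continuous. Next, $\phi^c$ is a supremum of the functions $p\mapsto c(x,p)-\phi(x)$. Each of these is $1$-Lipschitz in $p$ with respect to the $C^0$-distance, because $|c(x,p)-c(x,p')|\leq \norm{c(\cdot,p)-c(\cdot,p')}_{C^0}$. A supremum of $1$-Lipschitz functions is $1$-Lipschitz, so $\phi^c$ is continuous on $B_y$; it is finite because $c$ and $\phi$ are bounded.

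\textbf{Attainment of the suprema.}
\begin{itemize}
\item For the second identity, $x\mapsto c(x,p)-\phi(x)$ is continuous on the compact set $Sk(X)$, so the supremum defining $\phi^c(p)$ is a maximum.
\item For the first identity, involutivity from Lemma \ref{lem:Pc} gives $\phi=(\phi^c)^c$, that is, $\phi(x)=\sup_{p\in B_y}c(x,p)-\phi^c(p)$. The function $p\mapsto c(x,p)-\phi^c(p)$ is continuous on the compact set $B_y$, so this supremum is also attained.
\end{itemize}

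The only non-formal step is the joint continuity of $c$, and in particular continuity in $p$. This depends on topologising $B_y$ by the $C^0$ norm on $Sk(X)$; with that choice the continuity is immediate.
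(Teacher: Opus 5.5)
Your proposal is correct and follows essentially the same route as the paper: continuity of $c$ in $x$ from the uniform Lipschitz bound, tautological continuity in $p$ from the $C^0$-topology on $B_y$, hence continuity of $\phi^c$ and $\phi=(\phi^c)^c$, and attainment of both suprema by compactness of $Sk(X)$ and $B_y$. You also spell out details that the paper leaves implicit, namely the closedness of $B_y$ via a diagonal argument and the $1$-Lipschitz bound for $\phi^c$ with respect to the $C^0$-distance.
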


	\begin{proof}
	The continuity of $c(x,p)$ in $x$ follows from the Lipschitz bound, while the continuity in $p$ is tautological by the $C^0$-topology on $B_y$. Thus $\phi^c$ and $\phi=(\phi^c)^c$ are both continuous. The sup must be achieved by the compactness of $Sk(X)$ and $B_y$.
		\end{proof}

	\subsection{Comparison with NA semipositive potentials}\label{sec:Pcvspsh}

	The functions in $\mathcal{P}_c$ have unique \emph{maximal psh extensions} to semipositive potentials in $\text{CPSH}(X_K^{an},\mathcal{L})$.

	\begin{prop}\label{prop:maxextension}
		For any potential $\phi\in \mathcal{P}_c$ defined on $Sk(X)\subset X_K^{an}$, there is a unique $\phi\in  \text{CPSH}(X_K^{an},\mathcal{L})$, whose restriction to $Sk(X)\subset X_K^{an}$ agrees with the prescribed $\phi\in \mathcal{P}_c$, and moreover satisfies the 
	\emph{domination property}
		\begin{equation}\label{eqn:dominationproperty}
\phi(x)= \max\{ v(x) :v\in \text{CPSH}(X_K^{an},\mathcal{L}), v|_{Sk(X)}\leq \phi |_{Sk(X)}\},\quad \forall x\in X_K^{an}.
		\end{equation}
	\end{prop}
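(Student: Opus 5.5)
Define the extension as an envelope and show that it restricts to $\phi$ on $Sk(X)$. Set
\[
\Phi(x):=\sup\{ v(x): v\in \text{CPSH}(X_K^{an},\mathcal{L}),\ v|_{Sk(X)}\leq \phi|_{Sk(X)}\}.
\]
Uniqueness is then automatic. Any $\tilde\phi\in\text{CPSH}$ restricting to $\phi$ and satisfying (\ref{eqn:dominationproperty}) equals this sup, so it equals $\Phi$. The work lies in three things: the family is nonempty and uniformly bounded above, $\Phi$ is continuous psh, and $\Phi|_{Sk(X)}=\phi$. Once these hold, the sup in (\ref{eqn:dominationproperty}) is attained by $\Phi$ itself, so it is a max.

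\textbf{Step 1: lower bound by Fubini--Study potentials.} By Lemma \ref{lem:ctransformmax}, $\phi(x)=\max_{p\in B_y} c(x,p)-\phi^c(p)$. Each $p\in B_y$ is a $C^0$-limit on $Sk(X)$ of functions $\frac1{l_k}\log|\theta^{l_k}_{\alpha_k}|-\frac1{l_k}\log|\theta^{l_k}_{\alpha_k}|(y)$. Each such function, read as $\frac1l(\log|\theta^l_\alpha|-c)$ with $c=\log|\theta^l_\alpha|(y)$ a real constant, is a single-term Fubini--Study potential on all of $X_K^{an}$, hence lies in $\text{CPSH}(X_K^{an},\mathcal{L})$ (semipositivity is closed under max with others). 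Fix $\epsilon>0$ and a finite $\epsilon$-net $p_1,\dots,p_m$ of the compact set $B_y$. The net exists by the uniform Lipschitz bound (\ref{eqn:costfunctionLipschitz}) and the continuity of $\phi^c$. For each $p_i$, pick $l$ large and $\alpha_i$ so that the corresponding tropical theta function is $\epsilon$-close to $c(\cdot,p_i)$ on $Sk(X)$. Put all of them over a common $l$ by replacing $\theta^{l_i}_{\alpha_i}$ with powers, whose products are sections of $H^0(X_K,lL)$. Then
\[
v_\epsilon:=\max_i \tfrac1{l}\log|(\theta^{l_i}_{\alpha_i})^{l/l_i}|-\text{const}_i-\phi^c(p_i)-2\epsilon
\]
is a NA Fubini--Study potential. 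It satisfies $\phi-4\epsilon\le v_\epsilon\le\phi$ on $Sk(X)$: the upper bound because $c(x,p_i)-\phi^c(p_i)\le\phi$, the lower bound by the net property and Lipschitz continuity in $p$ through the $C^0$ topology. Hence $\Phi\ge v_\epsilon$, and on $Sk(X)$ we get $\phi-4\epsilon\le\Phi\le\phi$. Letting $\epsilon\to0$ gives $\Phi|_{Sk(X)}=\phi$.

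\textbf{Step 2: regularity of $\Phi$ (the main obstacle).} We need an upper bound for the family and continuity of the envelope. For the upper bound, I would use that for $v\in\text{CPSH}$, $\sup_{X_K^{an}} v=\sup_{\Delta_{\mathcal{X}}}v$, since $v\le v\circ r_{\mathcal{X}}$ for semipositive potentials. Then use the domination principle, Lemma \ref{lem:dominationproperty}, against the CY potential $\phi_0$, whose MA measure is supported on $Sk(X)$. Since $\phi\le\phi_0+C$ on $Sk(X)$, every competitor $v$ satisfies $v\le\phi_0+C$ everywhere. For continuity, the natural route is a limit of envelopes. The $v_\epsilon$ increase to $\Phi$ in the following sense: $\Phi$ is sandwiched between $\sup_\epsilon v_\epsilon$ and $P$-type envelopes $P(\phi_0+C)$. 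Better, I would show directly that
\[
\Phi=\lim_{\epsilon\to0}\max\big(v_\epsilon,\ \Phi_\epsilon\big),
\]
uniformly, where $\Phi_\epsilon$ is the envelope of competitors $\le\phi$ on $Sk(X)$. The cleanest argument uses domination again. If $v\le\phi$ on $Sk(X)$ and $v'\in\text{CPSH}$ has $\text{MA}(v')$ supported on $Sk(X)$ with $v'\ge\phi-4\epsilon$ there, then $v-4\epsilon\le v'$. Take $v'$ to be the BFJ solution with the MA measure of $v_\epsilon$-type data, or more simply take the upper-semicontinuous regularisation $\Phi^*$. Then use that $\text{MA}(\Phi^*)$ is supported on $Sk(X)$ by a balayage argument (off $Sk(X)$ the envelope is maximal), and deduce $\Phi^*\le v_\epsilon+4\epsilon$ everywhere, so $\Phi^*-\Phi\le 4\epsilon$ for every $\epsilon$. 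This shows $\Phi=\Phi^*$ is a uniform limit of Fubini--Study potentials $v_\epsilon$, hence in $\text{CPSH}$.

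The delicate point is justifying the support statement for $\text{MA}(\Phi^*)$, or equivalently the inequality $\Phi\le v_\epsilon+O(\epsilon)$ off $Sk(X)$. I would try first to deduce it from Lemma \ref{lem:dominationproperty} applied with $\psi=v_\epsilon$. If that is unavailable, I would work in the larger class of bounded psh functions, where BFJ's envelope theory gives that Choquet regularisation preserves $\text{PSH}$, and then use the domination principle there.
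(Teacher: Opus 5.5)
Your uniqueness argument and Step 1 are sound. Step 1 is essentially the paper's lower bound, except that the paper works pointwise: for each $x\in Sk(X)$ and a conjugate $p$, it uses one competitor built from a single $\theta^l_\alpha$ approximating $c(\cdot,p;y)$, so no $\epsilon$-net is needed. One small correction: $\frac{1}{l}\log|\theta^l_\alpha|$ alone is not in $\text{CPSH}(X_K^{an},\mathcal{L})$, since it is $-\infty$ on the zero locus of $\theta^l_\alpha$. You must take a max with a generating family of sections shifted by a very negative constant, which changes nothing on the compact set $Sk(X)$.

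The genuine gap is Step 2. The inequality $\Phi^*\le v_\epsilon+4\epsilon$ on $X_K^{an}$ does not follow from Lemma \ref{lem:dominationproperty}. That lemma yields $\varphi\le\psi$ only when the support hypothesis is placed on $\text{MA}(\psi)$, the function on the \emph{larger} side. To bound $\Phi^*$ above by $v_\epsilon+4\epsilon$ you would need $\text{MA}(v_\epsilon)$ to be carried by $Sk(X)$, and it is not. $v_\epsilon$ is a Fubini--Study potential, whose Monge--Amp\`ere measure is atomic at divisorial points of some model, in general away from $Sk(X)$. Knowing that $\text{MA}(\Phi^*)$ lives on $Sk(X)$ only lets you bound \emph{other} functions from above by $\Phi^*$. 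At best, a comparison-principle argument yields $\text{MA}(v_\epsilon)(\{\Phi^*>v_\epsilon+4\epsilon\})=0$, which does not make that set empty. Nor is there any reason for the inequality to hold. $v_\epsilon$ is fitted to $\phi$ only on $Sk(X)$, and nothing prevents it from lying well below the envelope at points retracting off $Sk(X)$. So $\Phi$ need not be a uniform limit of your $v_\epsilon$. Finally, even speaking of $\text{MA}(\Phi^*)$ and its support presupposes that $\Phi^*$ is a bounded psh potential, which is the very regularity at stake.

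The missing ingredient is the continuity of psh envelopes recalled in Section \ref{sec:envelop} (\cite[Prop. 8.2]{Boucksomsemipositive}): for $f\in C^0(X_K^{an})$, $P(f)$ lies in $\text{CPSH}(X_K^{an},\mathcal{L})$ and is the largest element of that class dominated by $f$. The paper does not attempt regularity by hand. It defines the extension as $P(f)$ for a continuous $f$ whose semipositive minorants are exactly your competitors, namely $f=\phi\circ r_{\mathcal{X}}$, using that every semipositive $v$ satisfies $v\le v\circ r_{\mathcal{X}}$. If $Sk(X)$ is a proper subcomplex of $\Delta_{\mathcal{X}}$, first extend $\phi$ to $\Delta_{\mathcal{X}}$ as the pointwise sup of your competitors there. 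This extension is continuous by the uniform Lipschitz bound on faces, finite by your upper bound via $\phi_0$, and equal to $\phi$ on $Sk(X)$ by your Step 1. Then $\Phi=P(f)$ lies in $\text{CPSH}(X_K^{an},\mathcal{L})$, the sup is attained, and Step 1 finishes the proof. Your closing sentence gestures at BFJ's envelope theory, but that is not an optional fallback: it is the step that carries the argument, and the domination-principle route you put first does not work.
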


	\begin{proof}
		Let $\phi\in \mathcal{P}_c$, and let $\phi^c: B_y\to \R$ be its $c$-transform. We define $\tilde{\phi} \in  \text{CPSH}(X_K^{an},\mathcal{L})$ as the psh envelop of $\phi\circ r_{\mathcal{X}}\in C^0(X_K^{an})$ (\cf Section \ref{sec:envelop}), hence
		\[
		\tilde{\phi}(x)= \max\{ v(x) :v\in \text{CPSH}(X_K^{an},\mathcal{L}), v|_{Sk(X)}\leq \phi|_{Sk(X)}\}. 
		\]
		We need to show that $\tilde{\phi}(x)= \phi(x)$ for any $x\in Sk(X)$. Clearly $\tilde{\phi}(x)\leq \phi(x)$.

		Let $\epsilon>0$ be any given small constant. By Lemma \ref{lem:ctransformmax},
		for any $x\in Sk(X)$, we can find some $p\in B_y$, such that 
		\[
		\phi(x) = c(x,p;y)-\phi^c(p). 
		\]
		Since $c(\cdot ,p; y)$ is a subsequential $C^0$-limit, we can find some $\theta_\alpha^l$ with $l\gg 1$, such that for any $x\in Sk(X)$, 
		\[
		| \frac{1}{l}  \log |\theta_\alpha^l | (x)-   \frac{1}{l}  \log |\theta_\alpha^l | (y) - c(x,p;y) |< \epsilon
		\]
		We find some NA Fubini-Study potential $\phi_l$ on $(X_K^{an},L)$, such that 
		\[
		\phi_l = l^{-1} \log |\theta_\alpha^l|-  l^{-1} \log |\theta_\alpha^l | (y) - \phi^c(p)-\epsilon \quad \text{on $Sk(X)$}.
		\]
		Thus $\phi_l\leq \phi$ on $Sk(X)$, so the domination property of $\tilde{\phi}$ implies	
		$\tilde{\phi}(x)\geq \phi_l(x)$, whence for any $x\in Sk(X)$,
		\[
		\tilde{\phi}(x) \geq \phi_l(x)\geq   c(x,p)- \phi^c(p) -2\epsilon \geq \phi(x)-2\epsilon.
		\]
		Since $x,\epsilon$ are arbitrary, this proves that $\tilde{\phi}(x)\geq \phi(x)$ for any $x\in Sk(X)$. Hence $\tilde{\phi}= \phi$ on $Sk(X)$, and $\tilde{\phi}\in \text{CPSH}(X_K^{an},\mathcal{L})$ furnishes the maximal psh extension.
	\end{proof}

	In the converse direction,
	
	\begin{prop}\label{prop:maxextension2}
	Suppose $\phi \in \text{CPSH}(X_K^{an},\mathcal{L})$, then its restriction to $Sk(X)$ lies in $\mathcal{P}_c$.
	\end{prop}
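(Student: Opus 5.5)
The plan is to prove the reverse inequality $\phi\leq (\phi^c)^c$ on $Sk(X)$. By Lemma \ref{lem:Pc} the inequality $(\phi^c)^c\leq \phi$ is automatic, and once equality holds, $\phi|_{Sk(X)}$ lies in the image of the $c$-transform, hence in $\mathcal{P}_c$. I will approximate $\phi$ uniformly by NA Fubini-Study potentials. Then I will use valuative independence to rewrite each Fubini-Study potential, restricted to $Sk(X)$, as a finite maximum of translated tropical theta functions. Finally I will pass to the limit $l\to\infty$ to produce, for each point $x$, a limiting tropical theta function that touches $\phi$ from below at $x$.

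\textbf{Step 1 (Fubini-Study form on the skeleton).} By definition of $\text{CPSH}(X_K^{an},\mathcal{L})$, for every $\epsilon>0$ there is a Fubini-Study potential
\[
\phi_l=\tfrac1l\max_i(\log|s_i|-c_i),
\]
with $s_i\in H^0(X_K,lL)$ and $l$ arbitrarily large, such that $|\phi_l-\phi|<\epsilon$ on $X_K^{an}$. Expand each $s_i=\sum_\alpha a_{i\alpha}\theta^l_\alpha$ in the valuative independent basis. Definition \ref{Def:valuativeindependence} gives, for $x\in Sk(X)$,
\[
\log|s_i|(x)=\max_{\alpha:a_{i\alpha}\neq0}\big(\log|a_{i\alpha}|+\log|\theta^l_\alpha|(x)\big).
\]
Interchanging the two maxima, on $Sk(X)$ we get
\[
\phi_l(x)=\max_{\alpha\in A_l}\big( g_{l,\alpha}(x)-d_{l,\alpha}\big),\qquad g_{l,\alpha}:=\tfrac1l\log|\theta^l_\alpha|-\tfrac1l\log|\theta^l_\alpha|(y),
\]
where $A_l$ is the finite set of indices that actually occur and $d_{l,\alpha}\in\R$ are constants. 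Each term is bounded above by $\phi_l$, so $g_{l,\alpha}-d_{l,\alpha}\leq \phi+\epsilon$ on $Sk(X)$ for every $\alpha\in A_l$. Evaluating at $y$, where $g_{l,\alpha}(y)=0$, gives $d_{l,\alpha}\geq -\phi(y)-\epsilon$.

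\textbf{Step 2 (touching at a point and passing to the limit).} Fix $x_0\in Sk(X)$ and choose $\alpha_l\in A_l$ attaining the maximum at $x_0$. Then
\[
g_{l,\alpha_l}(x_0)-d_{l,\alpha_l}=\phi_l(x_0)\geq\phi(x_0)-\epsilon.
\]
The functions $g_{l,\alpha}$ are uniformly bounded and uniformly Lipschitz by Lemma \ref{lem:Lip}. Hence $d_{l,\alpha_l}$ is also bounded above, since $d_{l,\alpha_l}\leq g_{l,\alpha_l}(x_0)-\phi(x_0)+\epsilon$. We now let $\epsilon=\epsilon_k\to0$ with $l=l_k\to\infty$. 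By Arzel\`a-Ascoli, after passing to a subsequence, $g_{l_k,\alpha_{l_k}}\to c(\cdot,p;y)$ in $C^0$ for some $p\in B_y$, and $d_{l_k,\alpha_{l_k}}\to d$. In the limit
\[
c(\cdot,p)-d\leq\phi \ \text{ on } Sk(X),\qquad c(x_0,p)-d\geq\phi(x_0).
\]
The first inequality says $\phi^c(p)=\sup_x\big(c(x,p)-\phi(x)\big)\leq d$. Therefore
\[
(\phi^c)^c(x_0)\geq c(x_0,p)-\phi^c(p)\geq c(x_0,p)-d\geq\phi(x_0).
\]
Since $x_0$ was arbitrary, this proves the claim.

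\textbf{Main obstacle.} The essential point is Step 1: we need the Fubini-Study approximants to use sections of $lL$ with $l\to\infty$, and to collapse exactly on $Sk(X)$ into maxima over the theta basis. The collapse is precisely where valuative independence enters. One also has to arrange the approximating sequence so that the degrees $l$ tend to infinity, which can be done by replacing $l$ with a multiple via tensor powers without changing the potential. Beyond that, only boundedness of the constants $d_{l,\alpha_l}$ is needed for the compactness argument, and this was handled above by evaluating at $y$ and at $x_0$.
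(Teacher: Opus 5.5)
Your argument is correct. Its core is the same as the paper's: both approximate $\phi$ uniformly by Fubini--Study potentials, use valuative independence to rewrite each approximant on $Sk(X)$ as a finite maximum of normalised tropical theta functions $g_{l,\alpha}$ minus constants, and use the uniform Lipschitz bound of Lemma \ref{lem:Lip} to extract limits in $B_y$.

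Where you differ is the endgame. The paper approximates every $g_{l,\alpha}$ by some $c(\cdot,p;y)$, with an error $\epsilon_l\to 0$ uniform in $\alpha$. It notes that the resulting finite maximum of the functions $c(\cdot,p;y)-c_\alpha$ lies in $\mathcal{P}_c$. It then concludes by closedness of $\mathcal{P}_c$ under uniform limits, which follows from the $1$-Lipschitz property of the $c$-transform in Lemma \ref{lem:Pc}.

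You instead fix a point $x_0$ and follow only the index active at $x_0$, together with its constant. In the limit this produces a $p\in B_y$ with $c(\cdot,p;y)-d\le \phi$ on $Sk(X)$ and equality at $x_0$, which gives $(\phi^c)^c(x_0)\ge\phi(x_0)$ directly.

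Your route avoids three things the paper needs:
\begin{itemize}
\item the uniform-in-$\alpha$ approximation;
\item the implicit check that a finite maximum of $c$-functions is the $c$-transform of a bounded function;
\item the closedness of $\mathcal{P}_c$.
\end{itemize}
As a by-product it exhibits a conjugate point at every $x_0$, which is the attainment in Lemma \ref{lem:ctransformmax}. The paper's route gives instead a slightly stronger quantitative statement: each Fubini--Study approximant, restricted to $Sk(X)$, is $\epsilon_l$-close to an explicit element of $\mathcal{P}_c$. Your remark that $l\to\infty$ can be forced by passing to tensor powers $s_i^{\otimes m}$ makes explicit a point the paper leaves implicit.
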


	\begin{proof}
	Any $\phi\in  \text{CPSH}(X_K^{an},\mathcal{L})$ can be $C^0$-approximated by Fubini-Study potentials. For any large $l\gg 1$, and any section $s\in H^0(X_K, lL)$, upon writing $s=\sum a_\alpha \theta_\alpha^l$, by the valuative independence condition
	\[
	\log |s| (x)= \max_{\alpha: a_\alpha\neq 0} (\log |\theta_\alpha^l|(x)- val(a_\alpha)),\quad \forall x\in Sk(X).
	\]
	A general Fubini-Study potential is of the form
	\[
	\phi_{FS}=l^{-1}\max_i (  \log |s_i|- c_i)
	\]
	for  some $s_i\in H^0(X_K, lL)$, 
hence  on $Sk(X)$, the potential  $\phi_{FS}$  agrees with 
	\[
	\max_\alpha (l^{-1}\log |\theta_\alpha^l|(x)-   l^{-1}\log |\theta_\alpha^l|(y) -     c_\alpha)
	\]
	for suitable choices of $c_\alpha\in \R$.

By the equicontinuity estimate in  Lemma \ref{lem:Lip} and the compactness of $B_y$, there are some uniform constants $\epsilon_l$ depending only on $l$, such that for any $\theta_\alpha^l$, there is some $p\in B_y$ such that
	\[
	\sup_{x\in Sk(X)} | l^{-1} \log |\theta_\alpha^l |(x)-  l^{-1}\log |\theta_\alpha^l|(y) -c(x,p;y) |\leq \epsilon_l \to 0,\quad l\to +\infty.
	\]
Thus we have found a function $\varphi=   \max_p( c(x,p;y)- c_\alpha) $ satisfying 
$
	|\phi_{FS} - \varphi| \leq \epsilon_l.
$
	The function $\varphi$ lies in $\mathcal{P}_c$. The upshot is that on $Sk(X)$, any $\phi\in  \text{CPSH}(X_K^{an},\mathcal{L})$ can be $C^0$-approximated by functions in $\mathcal{P}_c$. But $\mathcal{P}_c$ is closed under $C^0$-topology, hence $\phi|_{Sk(X)}$ belongs to $\mathcal{P}_c$.
	\end{proof}

In summary, the functions in $\mathcal{P}_c$ can be identified with the NA semipositive potentials satisfying the domination property.

		\begin{cor}\label{cor:Pcconvex}
	 $\mathcal{P}_c$ is a convex subset  inside $C^0(Sk(X))$.
	\end{cor}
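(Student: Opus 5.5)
The plan is to reduce the convexity of $\mathcal{P}_c$ to the convexity of the cone $\text{CPSH}(X_K^{an},\mathcal{L})$, using the identification supplied by Prop. \ref{prop:maxextension} and Prop. \ref{prop:maxextension2}.

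Let $\phi_1,\phi_2\in\mathcal{P}_c$ and $s\in[0,1]$.

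\textbf{Step 1.} By Prop. \ref{prop:maxextension}, each $\phi_i$ admits an extension $\tilde\phi_i\in\text{CPSH}(X_K^{an},\mathcal{L})$ whose restriction to $Sk(X)$ equals $\phi_i$.

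\textbf{Step 2.} Form the convex combination $\Phi=s\tilde\phi_1+(1-s)\tilde\phi_2$. I claim $\Phi$ lies again in $\text{CPSH}(X_K^{an},\mathcal{L})$. This is the standard fact that continuous semipositive potentials form a convex set. To check it directly, approximate each $\tilde\phi_i$ uniformly by NA Fubini--Study potentials
\[
l^{-1}\max_j(\log|s^{(i)}_j|-c^{(i)}_j),
\]
and pass to a common $l$ by taking powers. Rational convex combinations are then handled by the tensor trick:
\[
\tfrac{a}{m}\phi_{FS,l}^{(1)}+\tfrac{m-a}{m}\phi_{FS,l}^{(2)}
\]
is itself a Fubini--Study potential of level $ml$, built from the products $(s^{(1)}_j)^{a}(s^{(2)}_k)^{m-a}$, since $\max$ distributes over sums. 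General $s$ follows by uniform approximation.

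\textbf{Step 3.} Since $\Phi\in\text{CPSH}(X_K^{an},\mathcal{L})$, Prop. \ref{prop:maxextension2} shows that its restriction to $Sk(X)$ lies in $\mathcal{P}_c$. That restriction is exactly $s\phi_1+(1-s)\phi_2$, so this function belongs to $\mathcal{P}_c$, which proves convexity.

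The only point needing care is Step 2, namely that $\text{CPSH}$ is convex. This is standard in the theory (Boucksom et al.). If one prefers not to cite it, the Fubini--Study computation sketched above, together with the closedness of $\text{CPSH}$ under uniform limits, settles it.
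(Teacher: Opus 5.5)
Your proposal is correct and is essentially the paper's proof. You extend $\phi_1,\phi_2$ to $\text{CPSH}(X_K^{an},\mathcal{L})$ via Prop.~\ref{prop:maxextension}, use the convexity of $\text{CPSH}(X_K^{an},\mathcal{L})$, and restrict back to $Sk(X)$ using Prop.~\ref{prop:maxextension2}. The only difference is that the paper cites the convexity of $\text{CPSH}$ as known, while you also sketch it via the Fubini--Study tensor trick. That sketch is valid, since the products $(s^{(1)}_j)^{a}(s^{(2)}_k)^{m-a}$ still have no common zeros.
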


	\begin{proof}
	Let $\phi_1,\phi_2\in \mathcal{P}_c$, and $0\leq \lambda\leq 1$. By Prop. \ref{prop:maxextension}, we take the maximal  psh extension of $\phi_1, \phi_2$ to potentials in $\text{CPSH}(X_K^{an},\mathcal{L})$, denoted as $\tilde{\phi}_1, \tilde{\phi}_2$. Since $\text{CPSH}(X_K^{an},\mathcal{L})$ is convex, we see $\lambda \tilde{\phi}_1+(1-\lambda)\tilde{\phi}_2\in \text{CPSH}(X_K^{an},\mathcal{L})$. This function restricts to $\lambda \phi_1+(1-\lambda)\phi_2$, which belongs to $\mathcal{P}_c$ by Prop. \ref{prop:maxextension2}.
	\end{proof}

	\begin{cor}\label{cor:tropicalthetainPc}
The functions $l^{-1} \log |\theta_\alpha^l| : Sk(X)\to \R$ belong to $\mathcal{P}_c$ whenever $l\geq 1$.
	\end{cor}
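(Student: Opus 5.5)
The plan is to realise $l^{-1}\log|\theta_\alpha^l|$ on $Sk(X)$ as the restriction of a genuine NA semipositive potential and then invoke Prop.~\ref{prop:maxextension2}. Fix $l\geq 1$ and $\alpha$. Since $L$ is relatively ample, for $m\gg 1$ the line bundle $mL$ is globally generated on $X_K$. I would form the Fubini--Study type potential
\[
\phi_k = \frac{1}{(1+km)l}\max_{\beta}\Big( \log|\theta_\alpha^l \otimes s_\beta^{\otimes l}| \Big) \cdot \text{(suitably normalised)},
\]
but it is cleaner to argue as follows. Choose a finite collection $s_1,\ldots,s_M\in H^0(X_K,mL)$ without common zeros, and consider, for $k\geq 1$, the sections $\theta_\alpha^l\otimes s_{i_1}\cdots s_{i_{k}}\in H^0(X_K,(l+km)L)$ together with enough further sections to generate. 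The corresponding Fubini--Study potential
\[
\phi_k=\frac{1}{l+km}\max\Big(\log|\theta_\alpha^l| + k\max_i \log|s_i| ,\ \log|\sigma_j| - C_k\Big)
\]
lies in $\text{CPSH}(X_K^{an},\mathcal{L})$, where the $\sigma_j$ generate $(l+km)L$ and $C_k$ is a large constant.

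This seems to introduce the contribution of $\max_i\log|s_i|$, which does not vanish. A more direct route is therefore preferable. By the valuative independence and the proof of Prop.~\ref{prop:maxextension2}, on $Sk(X)$ every Fubini--Study potential at level $l$ equals $\max_\beta(l^{-1}\log|\theta_\beta^l|-c_\beta)$. I take the level-$l$ Fubini--Study potential built from the full basis $\{\theta_\beta^l\}$ with constants $c_\beta=0$ for $\beta=\alpha$ and $c_\beta=T$ for $\beta\neq\alpha$. Such a potential is legitimate, since the full basis generates $lL$ whenever $lL$ is globally generated; I first handle that case. Its restriction to $Sk(X)$ is
\[
\psi_T=\max\big(l^{-1}\log|\theta_\alpha^l|,\ \max_{\beta\neq\alpha}(l^{-1}\log|\theta_\beta^l|-T)\big).
\]
On the compact set $Sk(X)$ the functions $\log|\theta_\beta^l|$ are bounded. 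Hence for $T$ large, $\psi_T=l^{-1}\log|\theta_\alpha^l|$ exactly on $Sk(X)$. Prop.~\ref{prop:maxextension2} then shows that $\psi_T|_{Sk(X)}\in\mathcal{P}_c$.

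The main obstacle is the case where $lL$ is not globally generated, so that level-$l$ Fubini--Study potentials are not defined. Here I would pass to multiples. The section $(\theta_\alpha^l)^{k}\in H^0(X_K,klL)$ satisfies $(kl)^{-1}\log|(\theta_\alpha^l)^k|=l^{-1}\log|\theta_\alpha^l|$. For $k$ large, $klL$ is globally generated. Expanding in the basis $\{\theta_\gamma^{kl}\}$ and using valuative independence, I would then repeat the construction above at level $kl$. The only required input is a basis of $H^0(X_K,klL)$ containing the vector $(\theta_\alpha^l)^k$ up to a Fubini--Study expression. Concretely, I take a Fubini--Study potential at level $kl$ whose generating set is $\{(\theta_\alpha^l)^k\}\cup\{\theta_\gamma^{kl}\}$, with constant $0$ on the first element and $T$ on the others. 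This expression is a valid Fubini--Study potential because the $\theta_\gamma^{kl}$ generate. Its value on $Sk(X)$ equals $l^{-1}\log|\theta_\alpha^l|$ for $T\gg1$. Applying Prop.~\ref{prop:maxextension2} gives the conclusion.
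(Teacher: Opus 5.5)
Your proposal is correct and takes the same route as the paper: realise $l^{-1}\log|\theta_\alpha^l|$ on $Sk(X)$ as the restriction of a Fubini--Study potential, then apply Prop.~\ref{prop:maxextension2}. Your explicit construction (penalising the other generators by a large constant $T$, and passing to $(\theta_\alpha^l)^k$ at level $kl$ when $lL$ is not globally generated) supplies the details that the paper's one-line proof leaves implicit, and the abandoned first attempt involving $\max_i\log|s_i|$ can simply be deleted.
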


	\begin{proof}
	The function $l^{-1} \log |\theta_\alpha^l| : Sk(X)\to \R$ can be extended to a Fubini-Study potential in $\text{CPSH}(X_K^{an},\mathcal{L})$, so the restriction to $Sk(X)$ is a function in $\mathcal{P}_c$ by Prop. \ref{prop:maxextension2}.
	\end{proof}

	\subsection{Gradient convex hull and Okounkov body}\label{sec:gradientconvexhullOkounkovbody}

The set of limiting tropical theta functions $B_y$ is  closely related to certain convex sets $\mathcal{C}_y, \Delta_y$.

Recall that for any convex function $u$ defined on an open convex domain in $\R^n$, the \emph{subgradient} at any point $x$ is the set 
\[
\nabla u(x)=\{     p\in \R^n:    u(x') \geq u(x)+ \langle x'-x, p\rangle , \forall x'     \}\subset \R^n.
\]
When this set is a singleton, we often identify it with the gradient vector.
Recall that $\mathcal{P}_c$ is defined independent of the parameter $y\in Sk(X)$.
Now for any $\phi\in \mathcal{P}_c$, its restriction to any $n$-dimensional face $\Delta_J\subset Sk(X)$ is a convex function by Lemma \ref{lem:uniformcontinuityPc}. Thus at any $x\in \Delta_J^0=\text{Int}(\Delta_J)$, we can define the \emph{gradient convex hull}
\begin{equation}
\mathcal{C}_x=  \{  p:    p\in \nabla \phi(x)  \text{ for some } \phi\in \mathcal{P}_c       \}  \subset T_y^* \Delta_J \simeq \R^n .
\end{equation}

\begin{lem}
$\mathcal{C}_x$ is a closed, bounded and convex subset of $\R^n$.
\end{lem}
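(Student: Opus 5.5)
The plan is to verify the three properties separately, using the structural facts about $\mathcal{P}_c$ established above: it is convex (Cor. \ref{cor:Pcconvex}), closed in $C^0(Sk(X))$, and its members satisfy the uniform Lipschitz bound of Lemma \ref{lem:uniformcontinuityPc} with a constant $C$ independent of $\phi$.

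\emph{Boundedness.} For $\phi\in\mathcal{P}_c$ and $x\in\Delta_J^0$, any subgradient $p\in\nabla\phi(x)$ of the convex function $\phi|_{\Delta_J^0}$ satisfies $|p|\leq C$. To see this, take $x'=x+\delta v$ for a unit vector $v$ and small $\delta>0$, so that $x'$ stays inside $\Delta_J^0$. The subgradient inequality then gives $\delta\langle v,p\rangle\leq \phi(x')-\phi(x)\leq C\delta$. Since $v$ is arbitrary, $\mathcal{C}_x$ lies in the ball of radius $C$.

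\emph{Convexity.} Let $p_i\in\nabla\phi_i(x)$ for $i=1,2$, and let $0\leq\lambda\leq1$. Adding $\lambda$ times the subgradient inequality for $\phi_1$ to $(1-\lambda)$ times the one for $\phi_2$ shows that $\lambda p_1+(1-\lambda)p_2\in\nabla(\lambda\phi_1+(1-\lambda)\phi_2)(x)$. By Cor. \ref{cor:Pcconvex}, $\lambda\phi_1+(1-\lambda)\phi_2$ lies in $\mathcal{P}_c$, so the combination belongs to $\mathcal{C}_x$.

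\emph{Closedness.} This step carries the only real content: it is a compactness argument. Let $p_k\in\nabla\phi_k(x)$ with $\phi_k\in\mathcal{P}_c$ and $p_k\to p$.
\begin{enumerate}
\item Normalise $\phi_k(x)=0$. This is allowed because $\mathcal{P}_c$ is invariant under adding constants: if $\phi=\psi^c$ then $\phi+a=(\psi-a)^c$.
\item The normalised $\phi_k$ are uniformly Lipschitz on each face and $Sk(X)$ is compact, hence they are uniformly bounded on the connected components of $Sk(X)$. Arzel\`a--Ascoli then gives a subsequence with $\phi_k\to\phi$ in $C^0(Sk(X))$.
\item The limit $\phi$ lies in $\mathcal{P}_c$. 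Indeed, by the $L^\infty$-contraction in Lemma \ref{lem:Pc}, $(\phi^c)^c=\lim(\phi_k^c)^c=\lim\phi_k=\phi$.
\item Passing to the limit in $\phi_k(x')\geq\phi_k(x)+\langle x'-x,p_k\rangle$ for every $x'\in\Delta_J$ gives $p\in\nabla\phi(x)$, so $p\in\mathcal{C}_x$.
\end{enumerate}
Should $Sk(X)$ fail to be connected, the constant normalisation can be made separately on each component; this does not affect membership in $\mathcal{P}_c$, since $c$-transforms are computed componentwise up to constants. Alternatively, one can use closedness of $\mathcal{P}_c$ directly once uniform boundedness is arranged.
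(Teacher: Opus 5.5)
Your proof is correct and follows the paper's own argument: the Lipschitz bound gives boundedness, Cor.~\ref{cor:Pcconvex} gives convexity, and compactness of $\mathcal{P}_c$ modulo constants gives closedness, with you spelling out the normalisation, Arzel\`a--Ascoli, and closedness of $\mathcal{P}_c$ via the $L^\infty$-contraction. You should drop your closing hedge, because adding different constants on different components need not preserve $\mathcal{P}_c$ (the $c$-transform takes a supremum over all of $Sk(X)$); this is moot anyway since $Sk(X)$ is connected, and Lemma~\ref{lem:ctransformmax} gives $|\phi(x)-\phi(x')|\le\sup_{p\in B_y}|c(x,p;y)-c(x',p;y)|$ for every $\phi\in\mathcal{P}_c$, so normalising at a single point already yields uniform boundedness.
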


\begin{proof}
The boundedness of $\mathcal{C}_x$ follows from the uniform Lipschitz estimate for $\phi\in \mathcal{P}_c$. If $p_i\in \nabla \phi_i$ for some $\phi_i\in \mathcal{P}_c$ with $i=1,2$, then any convex combination $\lambda p_1+ (1-\lambda) p_2$ for $0\leq \lambda\leq 1$ is a subgradient for 
$\lambda \phi_1+ (1-\lambda) \phi_2$, which belongs to $\mathcal{P}_c$. Thus $\mathcal{C}_x$ is convex.

The function class $\mathcal{P}_c$ is compact modulo an additive constant that does not affect the subgradients. Given any sequence $p_i\in \mathcal{C}_x\subset \R^n$ converging to $p\in \R^n$, we can find $\phi_i\in \mathcal{P}_c$ which are uniformly bounded, such that $p_i\in \nabla \phi_i(x)$. Then we can take a subsequential $C^0$-limit $\phi\in \mathcal{P}_c$, and we observe $p\in \nabla \phi(x)$. Thus $\mathcal{C}_x$ is closed. 
\end{proof}

	\begin{rmk}
		We have abused the same notation $p$ to denote gradient, and the elements in $B_y$. Their relationship will be more apparent in Section \ref{sec:Okounkovcost}.
	\end{rmk}

Our next goal is to introduce an Okounkov body, associated to a semigroup constructed from the gradient of tropical theta functions.

On any open $n$-dimensional faces $\Delta_J^0$, there is a natural integral affine structure.
Recall from Section \ref{sect:valuativeindependence2} that for any $l\geq 1$, there is a finite union $\mathcal{S}_l$ of codimension one walls including all the lower dimensional faces of $Sk(X)$, such that on each connected component $O_a\subset Sk(X)\setminus \mathcal{S}_l$, the tropical theta functions $\log |\theta_\alpha^l|= - \langle x, \beta_\alpha\rangle $ are \emph{integral affine linear} on $O_a$.

\begin{Notation}
	We say that a point $y$ on an $n$-dimensional open face $\Delta_J^0\subset Sk(X)$ is \emph{sufficiently irrational}, if $y\notin \cup_{l\geq 1} \mathcal{S}_l$. Then at the point $y$, all the tropical theta functions $\log |\theta_\alpha^l |$ have a unique gradient 
	\begin{equation}
		p(\alpha):= (\nabla \log |\theta_\alpha^l|)(y) \in \Z^n\subset   \R^n\simeq T_y^* \Delta_J^0.
	\end{equation}
	(As a caveat, the domain of affine linearity for $\log |\theta_\alpha^l|$ around $y$ may become smaller as $l\to +\infty$, and the infinite intersection may not be open.) We will build a semigroup $\Gamma_y\subset \Z^{n+1}$ from the data of tropical theta functions. As a set,
	\begin{equation}
	\Gamma_{y,l}= \bigcup_\alpha \{     (\nabla \log |\theta_\alpha^l|)(y)              \}\subset \Z^n, \quad \Gamma_y= \bigcup_{l\geq 0}  (\{  l             \}\times \Gamma_{y,l} ) \subset \N_{\geq 0}\times \Z^n.
	\end{equation}
	The \emph{Okounkov body} $\Delta_y\subset \R^n$ is the closure of the convex hull for $\bigcup_{l\geq 1}  l^{-1} \Gamma_{y,l}$. 
	We denote $\Delta_y^0=\text{Int}(\Delta_y)$. 
\end{Notation}

\begin{lem}  (Basic properties of $\Gamma_y$)\label{lem:Gammay}
\begin{enumerate}
	\item 	For any given $l$,	the gradients $p(\alpha)\in \Z^n$ are all distinct for different $\alpha$.

	\item   $\Gamma_{y,l}$ is the union of all $(\nabla \log |s| )(y)$ for all nonzero $s\in H^0(X_K, lL)$.

	\item     The subset $\Gamma_y$ is a \emph{semigroup} under addition.

	\item   The semigroup $\Gamma_y$ generates $\Z^{n+1}$ as a group.

	\item  Any $p\in \Gamma_{y,l}$ satisfies the uniform linear growth bound $|p|\leq Cl$ for some uniform constant independence of $l,y$. Consequently, the Okounkov body is bounded.
\end{enumerate}
\end{lem}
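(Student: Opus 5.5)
I will work in the local chart near the point stratum $E_J$ with $t=z_0\cdots z_n$. I identify $\Delta_J^0$ with the affine slice $\{\sum x_i=1\}$, so gradients of integral affine functions live in $\Z^{n+1}/\Z(1,\ldots,1)\simeq \Z^n$. Since $y$ is sufficiently irrational, $y$ lies in a unique chamber $O_a$ of $Sk(X)\setminus\mathcal{S}_l$ for every $l$. There $\log|\theta^l_\alpha|=-\langle x,\beta_\alpha\rangle$, so $p(\alpha)$ is minus the class of the leading exponent $\beta_\alpha$ modulo $(1,\ldots,1)$.

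\textbf{Items 1, 2 and 5.} Item 1 is then exactly Lemma \ref{lem:valuativeindependence2}: distinct $\alpha$ have distinct leading exponent classes. For item 2, write $s=\sum a_\alpha\theta^l_\alpha$. Valuative independence gives $\log|s|=\max_\alpha(\log|\theta^l_\alpha|-val(a_\alpha))$ on $Sk(X)$. Each term is affine near $y$, and the gradients of distinct terms are distinct. The integer constants $val(a_\alpha)$ do not change the gradient modulo the $(1,\ldots,1)$ direction. At the irrational point $y$ two distinct integral affine functions with distinct gradients cannot tie: a tie would put $y$ on a rational wall, which by definition lies in $\mathcal{S}_l$. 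So a unique term achieves the max near $y$, and $\nabla\log|s|(y)=p(\alpha)$ for that $\alpha$. Conversely each $p(\alpha)$ is realised by $s=\theta^l_\alpha$. Item 5 follows from the uniform Lipschitz estimate, Lemma \ref{lem:Lip}, applied to $\log|\theta^l_\alpha|$ on $\Delta_J$, which gives $|p(\alpha)|\le Cl$. Hence $l^{-1}\Gamma_{y,l}$ lies in a fixed ball, and $\Delta_y$ is bounded.

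\textbf{Item 3.} By item 2, if $p\in\Gamma_{y,l}$ and $q\in\Gamma_{y,m}$ are realised by $s$ and $s'$, then $ss'\in H^0(X_K,(l+m)L)$ is nonzero because $X_K$ is integral. Its valuation is additive, $\log|ss'|=\log|s|+\log|s'|$. Hence $p+q=\nabla\log|ss'|(y)\in\Gamma_{y,l+m}$. For $l=0$, $H^0(X_K,\mathcal{O})=K$ contributes the gradient $0$.

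\textbf{Item 4 (main obstacle).} I expect this to be the hardest step. I read $\Gamma_y\subset\N\times\Z^n\subset\Z^{n+1}$. The plan is to show that for some large $l$ the set $\Gamma_{y,l}$ contains a point $p_0$ together with $p_0+e_i$ for each basis vector $e_i$ of $\Z^n$, and that $\Gamma_{y,l+1}$ is also nonempty. Then differences within level $l$ give $\{0\}\times\Z^n$, and a difference between levels $l+1$ and $l$ gives the first coordinate.

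To produce the local monomials, I will use relative ampleness. Choose a relatively ample model $\mathcal{L}$ (up to blow-up) so that for large $l$, $\mathcal{L}^l$ is globally generated near the point $E_J$. Then there is $s_0$ with $s_0/\tau$ a unit at $E_J$. I also take sections $s_i$ whose leading local term is $z_i$ times a unit, which should exist by ampleness: $\mathcal{L}^l\otimes\mathcal{O}(-E_i)$ is globally generated near $E_J$ for $l$ large, since $\mathcal{O}(-E_i)$ is a fixed line bundle. Multiplying by powers of $t$ if needed, the sections $s_0$ and $s_i$ have gradients $0$ and $-e_i$ modulo $(1,\ldots,1)$ in the dual coordinates. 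Together with item 2 this yields the required points. Nonemptiness of $\Gamma_{y,l}$ and $\Gamma_{y,l+1}$ is automatic once $H^0\neq 0$ for large $l$, which holds by ampleness.

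The delicate point is justifying the existence of the sections $s_i$ with the prescribed leading monomial on the model $\mathcal{X}$, which need not carry an ample model line bundle. I would try first to pass to an ample model via Remark \ref{rmk:blowup}-type invariance, or alternatively deduce the generation directly from Serre vanishing on a projective model.
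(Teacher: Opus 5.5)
Your treatment of items 1, 2, 3 and 5 is fine and matches the paper. The gap is in item 4, at exactly the point you flag. To produce $s_0,s_i\in H^0(X_K,lL)$ whose expansions at $E_J$ are a unit, resp.\ $z_i$ times a unit, you need a model of $L$ on $\mathcal{X}$ (a twist $\mathcal{L}+\sum_j c_jE_j$) with $\mathcal{L}^l$ and $\mathcal{L}^l(-E_i)$ globally generated at $E_J$. In effect this is a relatively ample model of $L$ on the fixed semistable $\mathcal{X}$. Nothing in the set-up provides one: the reference $\mathcal{L}$ is not assumed relatively ample, and the image in $N^1(X_K)$ of the relative ample cone of a given model need not contain the class of $L$.

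Neither of your fallbacks closes this. Remark \ref{rmk:blowup} concerns only the weak comparison property, whereas $\Gamma_y$ is built from the face $\Delta_J$ and the coordinates at $E_J$ of $\mathcal{X}$. Blow-ups making $L$ ample may have centres through $E_J$, which subdivides $\Delta_J$ along rational walls that $y$ is not assumed to avoid, so the set-up would have to be redone. Serre vanishing on $\mathcal{X}$ gives global generation only for bundles that are actually ample on $\mathcal{X}$, and their restrictions to $X_K$ need not be multiples of $L$.

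The missing idea is the decomposition behind \cite[Prop. 3.3]{BoucksomOkounkov}, which is what the paper invokes; with it you never need an ample model of $L$ itself. Let $\mathcal{A}$ be any relatively ample line bundle on $\mathcal{X}$, which exists since models are projective over $R$, and set $A=\mathcal{A}|_{X_K}$. For $k\gg1$, global generation of $\mathcal{A}^k$ and $\mathcal{A}^k(-E_i)$ at the point $E_J$ gives $f_0,f_i\in H^0(X_K,kA)$ with $\log|f_0|=0$ and $\log|f_i|=-x_i$ on $\Delta_J$. Since $L$ is ample on $X_K$, for $m\gg1$ there is $0\neq g\in H^0(X_K,mL-kA)$; measure it in the trivialisation of $\mathcal{L}^m\otimes\mathcal{A}^{-k}$. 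Because $f_0g\in H^0(X_K,mL)$ and $y\notin\mathcal{S}_m$, the function $\log|g|=\log|f_0g|$ is affine near $y$. Put $q=(\nabla\log|g|)(y)$, and let $\bar e_i$ be the class of the $i$-th basis vector of $\Z^{n+1}$ modulo $\Z(1,\ldots,1)$. Item 2 applied to $f_0g$ and $f_ig$ then gives $q$ and $q-\bar e_i$ in $\Gamma_{y,m}$. Your difference argument, together with $\Gamma_{y,m+1}\neq\emptyset$, then yields item 4.
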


\begin{proof}
	The integral lattice $\Z^n$ inside $T_y^* \Delta_J^0$ can be identified as $\Z^{n+1}/\Z(1,\ldots 1)$, where the Taylor exponents $\beta_\alpha$  lie in $\Z^{n+1}$, and the exponent of $t=z_0\ldots z_n$ is $(1,\ldots 1)\in \Z^{n+1}$. The gradient $p(\alpha)=(\nabla \log |\theta_\alpha^l|)(y)\in \Z^n$  is then the image of $-\beta_\alpha$ inside the quotient $\Z^{n+1}/\Z(1,\ldots 1)$. These are all distinct by Lemma \ref{lem:valuativeindependence2}, which proves item 1.

Given any $s\in H^0(X_K, lL)$, we can expand
\[
s= \sum_\alpha a_\alpha \theta_\alpha^l,\quad a_\alpha\in K.
\]
Since $y$ is sufficiently irrational, we can find some domain of linearity $O_a\subset Sk(X)$ containing $y$ in its interior, where for some choice of $\alpha$,
\[
\log |s|= \log |\theta_\alpha^l |- val(a_\alpha),
\]
hence $(\nabla \log |s| )(y)= (\nabla \log |\theta_\alpha^l| )(y)\in \Gamma_{y,l}$. This proves item 2.

Now given $\theta_\alpha^l \in H^0(X_K, lL)$ and $\theta_{\alpha'}^{l'}\in H^0(X_K, l' L)$, 
using item 2, we have
\[
(\nabla \log |\theta_\alpha^l | )(y) +(\nabla \log |  \theta_{\alpha'}^{l'}   |) (y) =( \nabla \log |\theta_\alpha^l   \theta_{\alpha'}^{l'}| )(y) \in  \Gamma_{y,l+l'},
\]
so $\Gamma_y$ is closed under addition. This proves item 3.

Item 4 follows from item 2 and the relative ampleness of $L$, by the argument in \cite[Prop. 3.3]{BoucksomOkounkov}.

Item 5 follows from the uniform Lipschitz bound in Lemma \ref{lem:Lip}.
\end{proof}

The general theory of Okounkov bodies for semigroups then imply the following.

\begin{cor}\label{cor:semigroup}
Let $\mathcal{K}$ be any fixed compact  convex  subset of $\Delta_y^0$. 
\begin{enumerate}

	\item (Integer points) For all large $l\geq l_0(\mathcal{K})$, if $p\in \mathcal{K}\cap l^{-1} \Z^n$, then $lp\in \Gamma_{y,l}$.

		\item The subset $ \bigcup_{l\geq 0} \{    l        \} \times (\Gamma_{y,l}\cap l\mathcal{K}) \subset \Gamma_y$ is a sub-semigroup. It is contained inside a finitely generated sub-semigroup of $\Gamma_y$.

	\item The Lebesgue volume of the Okounkov body $\text{Vol}_{\R^n}(\Delta_y) = \frac{(L^n)}{n!}$.
\end{enumerate}

\end{cor}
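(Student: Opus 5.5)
The plan is to verify the hypotheses of the general semigroup/Okounkov body theory (Khovanskii, Lazarsfeld--Musta\c{t}\u{a}, Kaveh--Khovanskii, and Boucksom's survey \cite{BoucksomOkounkov}) for $\Gamma_y\subset \N\times\Z^n$, and then read off the three items. The required inputs are already in Lemma \ref{lem:Gammay}. By item 3, $\Gamma_y$ is a semigroup. By item 4, it generates $\Z^{n+1}$ as a group. By item 5, it lies in the closed convex cone $\{(l,p): |p|\le Cl\}$, which is strictly convex and meets $\{0\}\times\R^n$ only at the origin. So $\Gamma_y$ is a \emph{semigroup of integral points} in the sense of Kaveh--Khovanskii, and its Okounkov body is the slice at height $1$ of the closed convex cone $\Sigma(\Gamma_y)$ it generates, which is exactly $\Delta_y$.

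For item 1 (integer points), I would invoke Khovanskii's theorem on the saturation of semigroups. If a semigroup $\Gamma$ generates $\Z^{n+1}$ as a group, then for any closed cone $\Sigma'$ contained in $\text{Int}\,\Sigma(\Gamma)\cup\{0\}$, all lattice points of $\Sigma'$ far enough from the origin lie in $\Gamma$. Taking $\Sigma'$ to be the cone over $\{1\}\times\mathcal{K}$, which sits in the interior because $\mathcal{K}\subset\Delta_y^0$ is compact, gives $l_0(\mathcal{K})$. The usual argument is worth sketching. Choose finitely many elements of $\Gamma_y$ whose cone contains $\Sigma'$ in its interior and which generate $\Z^{n+1}$. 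The group they generate is then a finite-index translate situation. Every lattice point of $\Sigma'$ far from the origin can be written as a large positive combination of these elements plus a bounded correction, and the correction can be absorbed. This is the step I expect to be the main obstacle. It requires choosing a finite generating set that both spans the group $\Z^{n+1}$ and has the right cone, and handling the bounded fundamental-domain correction uniformly in $l$. I would simply cite Khovanskii's lemma (as in \cite[Prop. 3.3]{BoucksomOkounkov}) rather than redo it.

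For item 2, closure under addition is immediate. If $lp\in\Gamma_{y,l}\cap l\mathcal{K}$ and $l'p'\in \Gamma_{y,l'}\cap l'\mathcal{K}$, then the sum lies in $\Gamma_{y,l+l'}$ by item 3, and $(lp+l'p')/(l+l')\in\mathcal{K}$ by convexity. For finite generation, I would enlarge $\mathcal{K}$ to a rational compact convex polytope $\mathcal{K}'$ with $\mathcal{K}\subset\mathcal{K}'\subset\Delta_y^0$. The lattice points of the rational polyhedral cone over $\mathcal{K}'$ form a finitely generated semigroup by Gordan's lemma. By item 1, all but finitely many of these lattice points (those at height $l<l_0$) lie in $\Gamma_y$. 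Adjoining the finitely many low-height elements of $\Gamma_y$ in that cone to the finitely many generators that are large multiples of Gordan generators (themselves in $\Gamma_y$ by item 1) gives a finitely generated subsemigroup of $\Gamma_y$ containing the given one.

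For item 3, the volume is $\lim_l l^{-n}\#\Gamma_{y,l}$ by the Okounkov body volume theorem, which is again a consequence of item 1 applied to exhausting $\mathcal{K}$ together with the boundedness from item 5. By item 1 of Lemma \ref{lem:Gammay}, the gradients $p(\alpha)$ are distinct, so $\#\Gamma_{y,l}=\dim_K H^0(X_K,lL)$. By asymptotic Riemann--Roch for the relatively ample $L$ on the generic fibre, this equals $\frac{(L^n)}{n!}l^n+O(l^{n-1})$. Hence $\text{Vol}(\Delta_y)=(L^n)/n!$.
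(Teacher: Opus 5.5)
Your proposal takes the same route as the paper. Item 1 is the Khovanskii/Witt Nystr\"om saturation lemma (the paper cites \cite[Lemma 2.3]{WittNystrom}), applied using items 3--5 of Lemma \ref{lem:Gammay}. Item 2 enlarges $\mathcal{K}$ to a rational polytope $\mathcal{K}'\subset\Delta_y^0$ and combines Gordan's lemma with item 1. Item 3 applies item 1 on an exhaustion of $\Delta_y^0$, together with the distinctness of the gradients $p(\alpha)$ (so $\#\Gamma_{y,l}=\dim_K H^0(X_K,lL)$) and Riemann--Roch.

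There is one slip in your finite-generation step for item 2. The semigroup generated by large multiples $N_ig_i$ of the Gordan generators, together with the elements of height $<l_0$, can miss lattice points of height $\geq l_0$. For example, if $l_0=2$, there are no nonzero elements of $\Gamma_y$ of height $1$ in the cone, and you take all $N_i=2$, then only even heights are reached.

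A correct finite generating set is the following. Let $H$ be the maximal height of a Gordan generator. Take all lattice points of the cone over $\mathcal{K}'$ with height in $[l_0,2l_0+H)$, which lie in $\Gamma_y$ by item 1. Add the finitely many elements of $\Gamma_y$ in that cone of height $<l_0$. To see that these generate every lattice point of the cone of height $\geq l_0$, cut a Gordan expansion at its first partial sum of height $\geq l_0$: that partial sum has height in $[l_0,l_0+H)$, and the remainder has height $\geq l_0$, so you can induct on height. The paper passes over this point in one line, so this is a repair of detail rather than of strategy.
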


\begin{proof}
Item 1 follows from the argument in  \cite[Lemma 2.3]{WittNystrom}.

For item 2, notice that the convexity of $\mathcal{K}$ implies that $\bigcup_{l\geq 0} \{    l        \} \times (\Gamma_{y,l}\cap l\mathcal{K})$ is closed under addition, hence defines a sub-semigroup. Now we can find a closed rational convex polyhedron $\mathcal{K}'$, such that $\mathcal{K}\subset \mathcal{K}'\subset \Delta_y^0$. Then
\[
 \bigcup_{l\geq 0} \{    l        \} \times (\Gamma_{y,l}\cap l\mathcal{K})  \subset  \bigcup_{l\geq 0} \{    l        \} \times (\Gamma_{y,l}\cap l\mathcal{K}') \subset \Gamma_y.
\]
But by item 1,  we have  $ \Gamma_{y,l}\cap l\mathcal{K}'= \Z^n \cap l\mathcal{K}'$   for large enough $l$, whence $ \bigcup_{l\geq 0} \{    l        \} \times (\Gamma_{y,l}\cap l\mathcal{K}')$ is a finitely generated sub-semigroup.

For item 3, we take an exhaustion of $\Delta_y^0$ by compact convex subsets $\mathcal{K}$, so that the points  in $l^{-1}\Z^n\cap (\Delta_y\setminus \mathcal{K})$ are negligible. Using item 1 and Riemann-Roch, the Lebesgue volume of the bounded convex body $\Delta_y$ is
\[
\text{Vol}_{\R^n}(\Delta_y) = \lim_{l\to +\infty} \frac{ |\Delta_y^0\cap l^{-1}\Z^n| }{ l^n  }= \lim_{l\to +\infty}  \frac{ |\Delta_{y,l}| }{ l^n  }=  \lim_{l\to +\infty} \frac{ \dim_K H^0(X_K, lL)}{l^n}= \frac{(L^n)}{n!}.
\]

\end{proof}

We now compare the \emph{gradient convex hull} with the \emph{Okounkov body}.

\begin{lem}\label{lem:gradientconvexhullOkounkovbody}
	Suppose $y\in Sk(X)$ is sufficiently irrational. Then for any $l\geq 1$, 
	\begin{enumerate}

		\item  The Okounkov body $\Delta_y\subset \mathcal{C}_y$.

		\item  Suppose there is some $\phi\in \mathcal{P}_c$ such that $\nabla\phi(y)$ consists of a single point $p$, then $p\in \Delta_y$.
	\end{enumerate}

\end{lem}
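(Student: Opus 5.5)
For item 1, the plan is to show that every generator $l^{-1}p$ with $p\in\Gamma_{y,l}$ lies in $\mathcal{C}_y$, and then use that $\mathcal{C}_y$ is closed and convex by the preceding lemma. Take $p=(\nabla\log|\theta_\alpha^l|)(y)$. By Cor. \ref{cor:tropicalthetainPc}, the function $l^{-1}\log|\theta_\alpha^l|$ belongs to $\mathcal{P}_c$. Since $y$ is sufficiently irrational, this function is affine near $y$ with gradient $l^{-1}p$. Hence $l^{-1}p\in\nabla\phi(y)$ for some $\phi\in\mathcal{P}_c$, which means $l^{-1}p\in\mathcal{C}_y$. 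Convexity and closedness of $\mathcal{C}_y$ then give $\Delta_y\subset\mathcal{C}_y$.

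For item 2, I would argue by approximation. By Prop. \ref{prop:maxextension}, $\phi$ extends to an element of $\text{CPSH}(X_K^{an},\mathcal{L})$, so it can be uniformly approximated by Fubini-Study potentials. As in the proof of Prop. \ref{prop:maxextension2}, valuative independence shows that on $Sk(X)$ these take the form
\[
\phi_k=\max_\alpha\bigl(l_k^{-1}\log|\theta_\alpha^{l_k}|-c_\alpha\bigr),\qquad \sup_{Sk(X)}|\phi_k-\phi|\to 0 .
\]
Each $\phi_k$ is convex on the face $\Delta_J$. Near the sufficiently irrational point $y$, every term is affine with gradient in $l_k^{-1}\Gamma_{y,l_k}$. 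So the subgradient $\nabla\phi_k(y)$ is the convex hull of the gradients of the terms achieving the max at $y$, and therefore $\nabla\phi_k(y)\subset\Delta_y$.

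The key step, and the main obstacle, is passing to the limit. I would use the standard upper semicontinuity of subdifferentials under uniform convergence of convex functions: if $\phi_k\to\phi$ uniformly near $y$ and $p_k\in\nabla\phi_k(y)$, then every limit point of $(p_k)$ lies in $\nabla\phi(y)$. The $p_k$ are bounded by Lemma \ref{lem:Gammay} item 5, so limit points exist. Indeed, the inequality $\phi_k(x')\ge\phi_k(y)+\langle x'-y,p_k\rangle$ passes to the limit and gives $p_\infty\in\nabla\phi(y)=\{p\}$. Since $\Delta_y$ is closed, $p\in\Delta_y$.

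One point needs care here. The subgradient is defined with respect to $x'$ in the open face $\Delta_J^0$, and convexity holds only facewise. So I would work in a small ball around $y$ inside $\Delta_J^0$; local subgradients coincide with global ones for convex functions on a convex domain. The single-point hypothesis is exactly what forces the limit to be $p$ and not merely some element of a larger subdifferential.
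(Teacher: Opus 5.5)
Your proposal is correct and takes essentially the same route as the paper. For item 1 you use $l^{-1}\log|\theta_\alpha^l|\in\mathcal{P}_c$ together with the closedness and convexity of $\mathcal{C}_y$; for item 2 you $C^0$-approximate the maximal psh extension by Fubini--Study potentials, note that their subgradients at $y$ lie in $\Delta_y$, and pass the supporting-hyperplane inequality to the limit. Your extra remarks (describing $\nabla\phi_k(y)$ as the convex hull of the active gradients, and local versus global subgradients on the convex open face $\Delta_J^0$) only spell out what the paper leaves implicit.
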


	\begin{proof}

	For $l\geq 1$, the functions $l^{-1} \log |\theta_\alpha^l| : Sk(X)\to \R$ belong to $\mathcal{P}_c$, hence their gradients lie inside $\mathcal{C}_y$. This proves $\Delta_y\subset \mathcal{C}_y$.

	By the proof of Prop. \ref{prop:maxextension2}, any Fubini-Study potential restricted to $Sk(X)$ is of the form 
		\[
	\phi_{FS}= \max_\alpha (l^{-1}\log |\theta_\alpha^l|(x)-   l^{-1}\log |\theta_\alpha^l|(y) -     c_\alpha),
	\]
	hence its gradient lies in the convex hull $\Delta_y$. Now any $\phi\in  \text{CPSH}(X_K^{an},\mathcal{L})$ can be $C^0$-approximated by Fubini-Study potentials, hence any function  $\phi\in \mathcal{P}_c$ can be $C^0$ approximated by a sequence $\phi_{FS,l}$ as $l\to +\infty$. Then there is a sequence of gradients $p_l\in \Delta_y$, such that
	\[
	\phi_{FS, l} (x)- \phi_{FS,l}(y)\geq \langle p_l, x-y\rangle, \quad \forall x\in \Delta_J^0.
	\]
	Since $p_l\in \Delta_y\subset \mathcal{C}_y$ are uniformly bounded, they converge to some $p_\infty\in \Delta_y$.
	Taking the $C^0$-limit,
	\[
	\phi(x)- \phi(y) \geq \langle p_\infty, x-y\rangle, \quad \forall x\in \Delta_J^0,
	\]
	hence $p_\infty\in \nabla \phi(y)$. If $\nabla \phi(y)$ consists of only one point $p$, then $p=p_\infty\in \Delta_y$.
	\end{proof}

	\begin{rmk}
	It is a curious question whether $\mathcal{C}_y=\Delta_y$ for sufficiently irrational $y$. The $C^0$-approximation argument above is unfortunately not strong enough to prove this equality. As a toy case, on the real line $\R$ one can find a sequence of convex functions $\max( |x|, \epsilon)$ for $\epsilon>0$, which all have zero gradient at the origin, but the limit function $|x|$ has many subgradients. 
	\end{rmk}
	
	\begin{rmk}
	It would be interesting to give a more birational geometric interpretation for the gradient convex hull and the Okounkov body.
	\end{rmk}
	

	\subsection{Okounkov body and cost function}\label{sec:Okounkovcost}

	Let $y\in Sk(X)$ be a sufficiently irrational point.
	The Okounkov body can be used to produce limiting tropical theta functions in $B_y$.
Recall that a real valued function on a semigroup $F:\Gamma\to \R$ is called \emph{subadditive} if 
$
		 F (a) + F (b) \geq F (a + b).
$
	For the semigroup $\Gamma_y$, by item 1 in Lemma \ref{lem:Gammay}, for each given $l\geq 1$, there is a natural bijection between the indexing set for $\{   \theta_\alpha^l \}$, and the gradients $p(\alpha): =( \nabla \log |\theta_\alpha^l | )(y)\in \Z^n$. Now for any $x\in Sk(X)$, we can define the function  
	\[
	F_x:  \Gamma_y\mapsto \R,\quad 	(l, p(\alpha)) \mapsto \log |\theta_\alpha^l|(x)- \log |\theta_\alpha|(y).
	\]

	\begin{lem}
		For any choice of $x\in Sk(X)$, the function $F_x$ is subadditive.
\end{lem}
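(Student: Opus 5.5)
Fix $(l,p(\alpha))$ and $(l',p(\alpha'))$ in $\Gamma_y$, with corresponding basis elements $\theta_\alpha^l$ and $\theta_{\alpha'}^{l'}$. By item 3 of Lemma \ref{lem:Gammay}, the sum $(l+l', p(\alpha)+p(\alpha'))$ lies in $\Gamma_{y,l+l'}$. By item 1 it corresponds to a unique basis element $\theta_{\alpha''}^{l+l'}$ with $p(\alpha'')=p(\alpha)+p(\alpha')$. We must show
\[
\log|\theta_\alpha^l|(x)-\log|\theta_\alpha^l|(y)+\log|\theta_{\alpha'}^{l'}|(x)-\log|\theta_{\alpha'}^{l'}|(y)\geq \log|\theta_{\alpha''}^{l+l'}|(x)-\log|\theta_{\alpha''}^{l+l'}|(y).
\]
The plan is to expand the product $s=\theta_\alpha^l\theta_{\alpha'}^{l'}\in H^0(X_K,(l+l')L)$ in the valuatively independent basis, $s=\sum_\beta a_\beta\theta_\beta^{l+l'}$. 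Multiplicativity of the seminorms gives $\log|s|=\log|\theta_\alpha^l|+\log|\theta_{\alpha'}^{l'}|$ everywhere on $Sk(X)$. Valuative independence then gives
\[
\log|s|(x)=\max_{\beta:a_\beta\neq 0}\bigl(\log|\theta_\beta^{l+l'}|(x)-val(a_\beta)\bigr)\quad\text{for all } x\in Sk(X).
\]

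The key step is to identify which term achieves the maximum at $y$. Since $y$ is sufficiently irrational, it lies in the interior of a domain of linearity $O_a$ for all degree $l+l'$ sections. As in the proof of item 2 of Lemma \ref{lem:Gammay}, near $y$ we have $\log|s|=\log|\theta_{\beta_0}^{l+l'}|-val(a_{\beta_0})$ for a single index $\beta_0$, and the gradients $(\nabla\log|\theta_\beta^{l+l'}|)(y)$ are pairwise distinct by item 1. Two distinct affine functions can agree on an open set only if they are equal, so the term achieving the maximum near $y$ is unique. Differentiating at $y$ then gives $(\nabla\log|s|)(y)=p(\beta_0)$. On the other hand, $(\nabla\log|s|)(y)=p(\alpha)+p(\alpha')=p(\alpha'')$, so $\beta_0=\alpha''$ by item 1. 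Hence $a_{\alpha''}\neq 0$ and
\[
\log|s|(y)=\log|\theta_{\alpha''}^{l+l'}|(y)-val(a_{\alpha''}).
\]

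For a general $x\in Sk(X)$, the max formula gives the lower bound
\[
\log|s|(x)\geq \log|\theta_{\alpha''}^{l+l'}|(x)-val(a_{\alpha''}).
\]
Subtracting the equality at $y$ cancels the constant $val(a_{\alpha''})$ and yields $\log|s|(x)-\log|s|(y)\geq \log|\theta_{\alpha''}^{l+l'}|(x)-\log|\theta_{\alpha''}^{l+l'}|(y)$. Substituting $\log|s|=\log|\theta_\alpha^l|+\log|\theta_{\alpha'}^{l'}|$ turns this into exactly $F_x(a)+F_x(b)\geq F_x(a+b)$.

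The main obstacle is the identification $\beta_0=\alpha''$ in the second step. This requires the gradient at $y$ of the product to equal the gradient of the dominant term. That holds only because $y$ avoids every wall $\mathcal{S}_{l+l'}$, and because the leading exponents are distinct modulo $(1,\ldots,1)$ by Lemma \ref{lem:valuativeindependence2}. Once these two facts are checked, the remaining steps are formal.
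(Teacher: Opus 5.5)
Your proposal is correct and follows essentially the same route as the paper. You expand $\theta_\alpha^l\theta_{\alpha'}^{l'}$ in the degree-$(l+l')$ valuatively independent basis to get a max formula on $Sk(X)$, identify the dominant term near the sufficiently irrational point $y$ as the unique $\theta_{\alpha''}^{l+l'}$ with $p(\alpha'')=p(\alpha)+p(\alpha')$, and subtract the resulting equality at $y$ from the inequality at $x$; your extra justification of why that dominant term is unique (distinct gradients together with $y\notin\mathcal{S}_{l+l'}$) just spells out a step the paper states without detail.
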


\begin{proof}
We can expand the product
\[
\theta_\alpha^l \theta_{\alpha'}^{l'}= \sum_\gamma a_\gamma \theta_\gamma^{l+l'},\quad a_\gamma\in K.
\]
By the valuative independence condition, 
\[
\log |\theta_\alpha^l |(x) + \log | \theta_{\alpha'}^{l'}|(x)= \log |\theta_\alpha^l \theta_{\alpha'}^{l'}|(x) =\max_\gamma \log  |\theta_\gamma^{l+l'}|(x)- val(a_\gamma).
\]
We pick $\gamma$ so that $p(\alpha)+ p(\alpha')= p(\gamma)$. Then  on the domain of affine linearity $O_a\subset Sk(X)\setminus \mathcal{S}_{l+l'}$ containing the point $y$, the dominant term in the  expansion of $\theta_\alpha^l \theta_{\alpha'}^{l'}$ has gradient $p(\gamma)$, so that on $O_a$ we have
\[
\log |\theta_\alpha^l |+  \log | \theta_{\alpha'}^{l'}|= \log |\theta_\alpha^l \theta_{\alpha'}^{l'}|= \log |\theta_\gamma^{l+l'}|- val(a_\gamma).
\]
In particular,
\[
\log |\theta_\alpha^l |(y)+  \log | \theta_{\alpha'}^{l'}|(y)= \log |\theta_\gamma^{l+l'}|(y)- val(a_\gamma),
\]
\[
\log |\theta_\alpha^l |(x)+  \log | \theta_{\alpha'}^{l'}|(x)\geq  \log |\theta_\gamma^{l+l'}|(x)- val(a_\gamma),
\]
hence
\[
F_x(l, p(\alpha))+ F_x(l', p(\alpha')) \geq F_x(l+l',   p(\alpha)+p(\alpha'))
\]
which means $F_x$ is subadditive.
\end{proof}

Given any $p\in \Delta_y^0$, 
the general theory for subadditive functions on semigroups allows us to define a function $c(\cdot, p;y)$ in the class $B_y$.

\begin{prop} (Limiting tropical theta functions I)\label{prop:limitingtropicalthetafromp} 	Let $y\in Sk(X)$ be a sufficiently irrational point.
\begin{enumerate}
	\item  Given $p\in \Delta_y^0$, for any sequence $(l, p(\alpha))\in \Gamma_{y,l}$ such that $l^{-1}p(\alpha)\to p$ as $l\to +\infty$, there is a well defined function in $B_y$,
	\[
	c(x,p;y)= \lim_{  l\to +\infty,   l^{-1}p(\alpha)\to p  }  l^{-1}(\log |\theta_\alpha^l|(x)- \log |\theta_\alpha^l|(y) ).
	\]

	\item As $p\in \Delta_y^0$ varies, the function $c(x,p;y)$ is convex in $p\in \Delta_y^0$.
	
	\item  For $p$ inside any given compact convex subset $\mathcal{K}\subset \Delta_y^0$, then $c(x,p;y)$ is a bounded Lipschitz function in $p$.

	\item  Inside any given compact subset $\mathcal{K}\subset \Delta_y^0$, there is a uniform convergence as $l\to +\infty$,
	\[
	\sup_{ x\in Sk(X) , l^{-1}p(\alpha) \in \mathcal{K} } |    l^{-1}(\log |\theta_\alpha^l|(x)- \log |\theta_\alpha^l|(y) )- c(x,  l^{-1}p(\alpha);y) | \leq \epsilon_l\to 0  .
	\]
\end{enumerate}
\end{prop}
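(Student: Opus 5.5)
The plan is to treat $F_x$, for each fixed $x\in Sk(X)$, as a subadditive function on the semigroup $\Gamma_y$ with linear growth. Then I would run the standard Fekete/Chebyshev-transform argument for semigroups, as in Witt Nystr\"om's construction of concave transforms on Okounkov bodies. Uniformity in $x$ would come at the end from the uniform Lipschitz estimate of Lemma \ref{lem:Lip} and compactness of $Sk(X)$.

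\textbf{Step 1: a priori bounds.} By Lemma \ref{lem:Lip} we have $|F_x(l,p(\alpha))|\leq C l$, uniformly in $x,\alpha,l$. The functions $x\mapsto l^{-1}F_x(l,p(\alpha))$ are uniformly Lipschitz on faces of $Sk(X)$ and vanish at $y$. So it suffices to prove that the limit in item 1 exists for each fixed $x$, locally uniformly in $p$; Arzel\`a--Ascoli then upgrades pointwise convergence in $x$ to uniform convergence.

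\textbf{Step 2: the limit along rational rays, and convexity.} Fix a compact convex $\mathcal{K}\subset\Delta_y^0$ and enlarge it slightly to a rational polytope $\mathcal{K}'\subset\Delta_y^0$.
\begin{itemize}
\item By Cor. \ref{cor:semigroup}, item 1, every point of $\Z^n\cap l\mathcal{K}'$ lies in $\Gamma_{y,l}$ for $l\geq l_0$. Hence every integral point is usable.
\item For rational $p\in\mathcal{K}'$, choose $m$ with $mp\in\Gamma_{y,m}$. Fekete's lemma applied to $k\mapsto F_x(km,kmp)$ gives the existence of $c(x,p):=\lim_k (km)^{-1}F_x(km,kmp)$, and it equals the infimum.
\item Subadditivity, $F_x(l_1+l_2,q_1+q_2)\leq F_x(l_1,q_1)+F_x(l_2,q_2)$, then shows that $c(x,\cdot)$ is convex on rational points of $\mathcal{K}'$. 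It is also bounded by $C$ in absolute value.
\item A bounded convex function on rational points of a slightly larger polytope is Lipschitz on $\mathcal{K}$, with constant controlled by $C$ and $\mathrm{dist}(\mathcal{K},\partial\mathcal{K}')$. Hence it extends continuously to $\mathcal{K}$, which gives items 2 and 3 once item 1 is settled.
\end{itemize}

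\textbf{Step 3: convergence along arbitrary sequences $l^{-1}p(\alpha)\to p$ (the main obstacle).} Both an upper and a lower bound are needed.
\begin{itemize}
\item \emph{Upper bound.} Write $(l,q)$, with $l^{-1}q$ near $p$, as a sum of many copies of a fixed generator $(m,mp')$, with $p'$ rational and close to $p$, plus a remainder $(r,q-kmp')$. Arrange that $r$ is a bounded fraction of $l$ and that $r^{-1}$ times the remainder lies in $\mathcal{K}'$; this is possible because $p$ is interior. Then subadditivity and the bound $|F_x|\leq Cr$ on the remainder give
\[
\limsup l^{-1}F_x(l,q)\leq c(x,p')+O(\delta),
\]
where $\delta$ controls the remainder.
\item \emph{Lower bound.} I would run subadditivity the other way. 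Choose a complementary element $(r',q')$ so that $(l,q)+(r',q')=k'(m,mp')$ is a multiple of a rational generator, with $r'/l$ small; again this uses that integral points of $l\mathcal{K}'$ are all in $\Gamma_y$. Then
\[
F_x(l,q)\geq F_x(k'm,k'mp')-F_x(r',q')\geq k'm\,c(x,p')-Cr',
\]
using that the Fekete limit is an infimum.
\end{itemize}
Letting $p'\to p$ and using the Lipschitz continuity from Step 2 identifies the limit as $c(x,p)$. All constants depend only on $\mathcal{K}$ and $C$, so the error is uniform over $l^{-1}p(\alpha)\in\mathcal{K}$ and $x\in Sk(X)$; this is item 4, and well-definedness in item 1 follows. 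Finally, $c(\cdot,p;y)$ is by construction a $C^0$-limit of normalized tropical theta functions, so it lies in $B_y$. The delicate bookkeeping is choosing the complementary element in the lower bound so that it stays inside $\Gamma_y$ with controlled size; that is where the interior condition $p\in\Delta_y^0$ and Cor. \ref{cor:semigroup} are essential.
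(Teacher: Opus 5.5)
Your proposal is correct and uses the same strategy as the paper: treat $F_x$ as a subadditive function of linear growth on $\Gamma_y$, pass to its Chebyshev-type limit on $\Delta_y^0$, and upgrade to uniform convergence in $x$ via the Lipschitz bound of Lemma~\ref{lem:Lip}. The difference is in how much is cited.

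\textbf{How the paper argues.} It gets items 1 and 2 by invoking \cite[Thm. 3.1]{WittNystrom} directly. Item 3 comes from ``bounded and convex implies locally Lipschitz''. Item 4 is proved by contradiction: extract convergent subsequences $x\to x_0$ and $l^{-1}p(\alpha)\to p$, then combine equicontinuity in $x$ with item 3.

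\textbf{How your route differs.} You reprove the Witt Nystr\"om limit theorem by hand. You apply Fekete's lemma on rational rays, then make a two-sided comparison through explicit decompositions whose remainders lie in $\Gamma_y$ by Cor.~\ref{cor:semigroup} item 1. This is longer but self-contained. It shows exactly where $p\in\Delta_y^0$ enters, namely keeping the remainder directions inside $\mathcal{K}'$. It also gives quantitative bounds, from which you read off item 4 directly.

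\textbf{One bookkeeping correction.} You claim all constants depend only on $\mathcal{K}$ and $C$.
\begin{itemize}
\item For the lower bound this holds, since it only uses that the Fekete limit is an infimum.
\item For the upper bound it does not. That estimate involves the rate at which $(km)^{-1}F_x(km,kmp')\to c(x,p')$ for the rational approximant $p'$, and this rate depends on $p'$ and $m$.
\end{itemize}
The fix is routine. Take a finite $\eta$-net of rational points $p'$ covering $\mathcal{K}$ inside $\mathcal{K}'$, each with its own $m$. For each fixed $p'$, the convergence is uniform in $x$, because the functions $x\mapsto (km)^{-1}F_x(km,kmp')$ are uniformly Lipschitz on faces and $Sk(X)$ is compact. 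Alternatively, once pointwise convergence is established, you can simply run the paper's compactness argument for item 4.
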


	\begin{proof}
		We divide the proof into a few steps.
		
		\begin{enumerate}
			\item 	As in Section \ref{sec:costfunction}, there is always some subsequential $C^0$-limit function for $  l^{-1}(\log |\theta_\alpha^l|(x)- \log |\theta_\alpha|(y) )$, which defines an element of $B_y$. We note
			\[
			l^{-1}	F_x(l,p(\alpha))=  l^{-1}(\log |\theta_\alpha^l|(x)- \log |\theta_\alpha^l |(y) ).
			\] 
			Since $|F_x|\leq Cl$, and $F_x$ is subadditive on $\Gamma_y$, by  \cite[Thm. 3.1]{WittNystrom}  the sequence $l^{-1}F(l, p(\alpha))$ converges to a unique limit, depending on $p$ but not on the details of the subsequence. Thus the function $c(x,p;y)$ is well defined.
			
			\item 		Furthermore, by passing the subadditive property to the limit, we see $c(x,p;y)$ is convex in $p\in \Delta_y^0$ \cite[Thm. 3.1]{WittNystrom}.
			
			\item 
			The function $c(x,p;y)$ is bounded and convex in $p\in \Delta_y^0$, hence it is Lipschitz on any compact convex subset of $\Delta_y^0$. 
			
			\item If the uniform convergence in item 4 fails, then we can extract some subsequence with $l\to +\infty$, and $l^{-1} \alpha(p)\in \mathcal{K}$, $x\in Sk(X)$ depending on $l$, such that  
			\[
			|  l^{-1}(\log |\theta_\alpha^l|(x)- \log |\theta_\alpha^l |(y) )- c(x, l^{-1} p(\alpha);y) | \geq C^{-1}.
			\]
			By the compactness of $Sk(X)$ and $\mathcal{K}$, we may assume $x$ converges to $x_0\in Sk(X)$, and $l^{-1}p(\alpha)$ converges to $p$. By the equicontinuity in $x$, for $l\gg 1$,
			\[
			l^{-1}(\log |\theta_\alpha^l|(x_0)- \log |\theta_\alpha^l |(y) )- c(x_0,  l^{-1}p(\alpha);y) | \geq \frac{1}{2C}.
			\]
			But $ l^{-1}(\log |\theta_\alpha^l|(x_0)- \log |\theta_\alpha^l |(y) )\to c(x_0,p;y)$ by item 1, and
			\[
			|c(x_0, p ; y)- c(x_0, l^{-1} p(\alpha); y)|\leq C|p-l^{-1}p(\alpha)|\to 0\] 
			by item 3, so we get a contradiction. 
		\end{enumerate}
	\end{proof}

The limiting tropical theta functions arising from $p\in \Delta_y^0$ enjoy some further nice properties, which makes the $c$-transform resemble the classical Legendre transform.

	\begin{prop}\label{prop:limitingtropicalthetafromp2} 
	(Limiting tropical theta functions II) 	Let $y\in Sk(X)$ be a sufficiently irrational point, and let  $\mathcal{K}\subset \Delta_y^0$ be a fixed closed convex subset. 
	\begin{enumerate}
		\item (Domain of affine linearity) There is some open convex polyhedral neighbourhood $ y\in U$  depending on $\mathcal{K}$, contained in the open $n$-dimensional face $\Delta_J^0\subset Sk(X)$, such that for any $p\in \mathcal{K}\subset \Delta_y^0\subset \R^n$, the cost function is affine linear on $U$,
		\[
		c(x,p;y)= \langle p, x-y\rangle ,\quad \forall x\in U.
		\]

		\item In particular, for any $p\in \Delta_y^0$, the cost function $c(x,p;y)$ has a unique gradient $p$ at the point $y$.

\item   Suppose $\tilde{c}(\cdot ,p;y)\in \text{CPSH}(X_K^{an},\mathcal{L})$ is the maximal psh  extension for $c(\cdot,p;y)\in \mathcal{P}_c$ in the sense of Prop. \ref{prop:maxextension},
\[
\tilde{c}(x,p;y)= \max\{ v(x) :v\in \text{CPSH}(X_K^{an},\mathcal{L}), v|_{Sk(X)}\leq c(\cdot,p;y) \},\quad \forall x\in X_K^{an}.
\]
Then $\tilde{c}(\cdot, p;y)$ factors through the retraction map $r_\mathcal{X}: X_K^{an}\to \Delta_{\mathcal{X}}$ over the open set $U\subset  Sk(X)\subset \Delta_{\mathcal{X}}$, namely
\[
\tilde{c}(x, p;y)= c(   r_{\mathcal{X}}(x), p; y  ),\quad \forall x\in r_{\mathcal{X}}^{-1}(U).
\]

	\end{enumerate}
	\end{prop}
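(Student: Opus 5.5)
The plan is to use the finite generation in Cor.~\ref{cor:semigroup} item 2 to get a single neighbourhood of affine linearity, and then pass to limits.

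\textbf{Item 1.} Enlarge $\mathcal{K}$ to a closed rational convex polyhedron $\mathcal{K}'\subset\Delta_y^0$ whose interior contains $\mathcal{K}$. By Cor.~\ref{cor:semigroup} item 2, the sub-semigroup $\bigcup_l \{l\}\times(\Gamma_{y,l}\cap l\mathcal{K}')$ lies in a finitely generated sub-semigroup of $\Gamma_y$. Fix generators $(l_j,p(\alpha_j))$, $j=1,\dots,N$, with corresponding theta functions $\theta_{\alpha_j}^{l_j}$. Since $y$ is sufficiently irrational, each $\log|\theta_{\alpha_j}^{l_j}|$ is affine linear on some open polyhedral neighbourhood of $y$. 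Let $U$ be the intersection of these finitely many neighbourhoods, shrunk to be convex and contained in $\Delta_J^0$. On $U$ we then have $F_x(l_j,p(\alpha_j))=\langle p(\alpha_j),x-y\rangle$.

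Now take any $(l,p(\alpha))$ with $l^{-1}p(\alpha)\in\mathcal{K}'$. Write it as a sum $\sum m_j(l_j,p(\alpha_j))$. Subadditivity of $F_x$ gives
\[
F_x(l,p(\alpha))\le \sum m_j F_x(l_j,p(\alpha_j))=\langle p(\alpha),x-y\rangle \quad \text{for } x\in U.
\]
Conversely, $l^{-1}\log|\theta_\alpha^l|$ is convex on $\Delta_J^0$ with gradient $l^{-1}p(\alpha)$ at $y$. Hence $F_x(l,p(\alpha))\ge\langle p(\alpha),x-y\rangle$ for every $x\in\Delta_J^0$. So equality holds on $U$, uniformly in all such $(l,\alpha)$.

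Next let $p\in\mathcal{K}$. By Cor.~\ref{cor:semigroup} item 1, we may choose $l^{-1}p(\alpha)\to p$ with $l^{-1}p(\alpha)\in\mathcal{K}'$. Passing to the limit with Prop.~\ref{prop:limitingtropicalthetafromp} item 1 gives $c(x,p;y)=\langle p,x-y\rangle$ on $U$.

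\textbf{Item 2.} This follows at once. A convex function that is affine linear near $y$ with slope $p$ has unique subgradient $p$ there. For any $p\in\Delta_y^0$, apply item 1 with $\mathcal{K}=\{p\}$.

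\textbf{Item 3.} Here one has to show the envelope is a function of the retraction, rather than working only on $Sk(X)$. The inequality $\tilde c\le c\circ r_\mathcal{X}$ on $r_\mathcal{X}^{-1}(U)$ should come from the standard fact that psh potentials satisfy $\phi\le\phi\circ r_\mathcal{X}$ (cf. \cite{Boucksomsemipositive}), combined with $\tilde c=c$ on $Sk(X)$.

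For the reverse inequality, use the Fubini–Study potentials built from the approximating theta functions. Take
\[
\phi_l=l^{-1}\log|\theta_\alpha^l|-l^{-1}\log|\theta_\alpha^l|(y)-\epsilon_l,
\]
with $l^{-1}p(\alpha)\to p$. By Prop.~\ref{prop:limitingtropicalthetafromp} item 4 this satisfies $\phi_l\le c(\cdot,p;y)$ on $Sk(X)$, so $\tilde c\ge\phi_l$ on all of $X_K^{an}$.

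The key claim is that $\log|\theta_\alpha^l|$ itself factors through $r_\mathcal{X}$ over $U$. On $U$ a single monomial $z^{\beta_\alpha}$ dominates the Taylor expansion at $y$, by the uniform affine linearity from item 1. For a point $x$ with $r_\mathcal{X}(x)\in U$, the centre of $x$ lies in $E_J$ and $|z_i|_x$ is prescribed by $r_\mathcal{X}(x)$. The issue is that the leading coefficient $c_{\alpha,\beta}$ must be a unit at the centre, not merely nonzero at the generic point. I would handle this by writing $\theta_\alpha^l=z^{\beta_\alpha}\cdot(\text{unit}+\text{terms of higher weight on }U)$, which shows $|\theta_\alpha^l|_x=|z^{\beta_\alpha}|_x$.

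I expect exactly this unit/centre issue to be the main obstacle. If it fails, I would instead approximate $c(\cdot,p;y)$ by the finitely many generator theta functions, $\max_j$ of affine pieces, for which the factorisation can be checked directly. Granting the claim, $\tilde c(x)\ge\phi_l(x)=\phi_l(r_\mathcal{X}(x))\to c(r_\mathcal{X}(x),p;y)$, which gives equality.
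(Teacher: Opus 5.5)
Your proposal is correct and follows the paper's proof essentially step by step. It uses finite generation from Cor.~\ref{cor:semigroup} to get a neighbourhood $U$ independent of $l$, then subadditivity plus convexity to force affine linearity of the tropical theta functions on $U$, then a single dominant monomial to make $\log|\theta_\alpha^l|$ factor through $r_{\mathcal{X}}$, and finally the two-sided envelope bound with $\tilde c\le \tilde c\circ r_{\mathcal{X}}$. Enlarging $\mathcal{K}$ to a rational polyhedron $\mathcal{K}'$ with $\mathcal{K}\subset\text{Int}(\mathcal{K}')$ is a useful small refinement, since it guarantees lattice approximants even when $\mathcal{K}$ has empty interior (e.g.\ $\mathcal{K}=\{p\}$ for item 2). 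Your unit/centre worry is moot: $E_J$ is a closed point, so the leading coefficient lies in $\C^*$, and the remaining terms are controlled by $val_x\geq val_{r_{\mathcal{X}}(x)}$, exactly as in the paper.
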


	\begin{proof}
		We divide the proof into a few steps.
		\begin{enumerate}
			\item 
	
	By item 2 in Cor. \ref{cor:semigroup}, we can find a finitely generated sub-semigroup for $\Gamma_y$ containing $ \bigcup_{l\geq 0} \{    l        \} \times (\Gamma_{y,l}\cap l\mathcal{K}) \subset \Gamma_y$.
	Each $\log |\theta_{\alpha_i}^{l_i}|$ is affine linear on some open polyhedron $O_a$ containing $y$. We take $U$ to be an open convex polyhedral neighbourhood of $y$ contained in the finite intersection of these $O_a$.

	For any $l\geq  1$, any $(l,   p(\alpha))\in \Gamma_{y,l}\cap l \mathcal{K}$ can be written as 
	\[
	(l,  p(\alpha))=  \sum_i k_i ( l_i, p(\alpha_i)),\quad k_i\in \Z_{\geq 0},\quad ( l_i, p(\alpha_i))\in \Gamma_{y,l_i}.
	\]
	We consider the  linear expansion of the product
	\begin{equation}\label{eqn:linearexpansionproduct}
	\prod_i (\theta_{\alpha_i}^{l_i} )^{\otimes k_i}= \sum_\gamma  a_\gamma  \theta_\gamma^l    \in H^0(X_K, lL),\quad a_\gamma\in K.
	\end{equation}
	Thus on $Sk(X)$, the valuative independence condition implies
	\begin{equation}\label{eqn:linearexpansionproduct2}
	\sum_i k_i \log |\theta_{\alpha_i}^{l_i} |=  \log | 	\prod_i (\theta_{\alpha_i}^{l_i} )^{\otimes k_i}|= \max_\gamma \log |\theta_\gamma^l |- val(a_\gamma).
	\end{equation}
	In particular,
	\begin{equation*}
		\sum_i k_i \log |\theta_{\alpha_i}^{l_i} |\geq \log |\theta_\alpha^l |- val(a_\gamma).
	\end{equation*}
	However, since $y$ is a sufficiently irrational point, there is a unique dominant term in the linear expansion (\ref{eqn:linearexpansionproduct}) on some smalll neighbourhood of $y$. The exponent of this term is determined by the gradient
	\[
(\nabla  \log 	|\prod_i (\theta_{\alpha_i}^{l_i} )^{\otimes k_i} |)(y)= \sum_i k_i (\nabla  \log 	| \theta_{\alpha_i}^{l_i} |)(y)=  \sum_i k_i p(\alpha_i)= p(\alpha)  .
	\]
	Thus in this small neighbourhood, the maximum in (\ref{eqn:linearexpansionproduct2}) is achieved by $\gamma=\alpha$. In particular 
	\[
		\sum_i k_i \log |\theta_{\alpha_i}^{l_i} |(y)=  \log |\theta_\alpha^l | (y)- val(a_\alpha).
	\]
		Combining the above, for any $x\in Sk(X)$,
\[
	\sum_i k_i  \left(\log |\theta_{\alpha_i}^{l_i} | (x) -  \log |\theta_{\alpha_i}^{l_i} | (y) \right)  \geq \log |\theta_\alpha^l |(x)- \log |\theta_\alpha^l | (y).
\]
On the open set $U$, all the finitely many functions $\log |\theta_{\alpha_i}^{l_i}|$ are affine linear, 
\[
\log |\theta_{\alpha_i}^{l_i} | (x) -  \log |\theta_{\alpha_i}^{l_i} | (y) = \langle p(\alpha_i), x-y\rangle,
\]
hence
\[
\begin{split}
&  	   \langle p(\alpha), x-y\rangle=  \sum_i k_i  \langle p(\alpha_i), x-y\rangle
	\\
	=& 	\sum_i k_i  \left(\log |\theta_{\alpha_i}^{l_i} | (x) -  \log |\theta_{\alpha_i}^{l_i} | (y) \right)  \geq \log |\theta_\alpha^l |(x)- \log |\theta_\alpha^l | (y).
\end{split}
\]
But the convexity of $\log |\theta_\alpha^l |$ gives the reverse inequality
\[
 \log |\theta_\alpha^l |(x)- \log |\theta_\alpha^l | (y) \geq  \langle p(\alpha), x-y\rangle.
\]
We conclude that on the open neighbourhood $y\in U$, \emph{the tropical theta function is affine linear},
\begin{equation}\label{eqn:tropicalthetaaffinelinear}
 \log |\theta_\alpha^l |(x)- \log |\theta_\alpha^l | (y) = \langle p(\alpha), x-y\rangle,
\end{equation}
whenever $l^{-1}\alpha(p)\in \mathcal{K}$. The key point here is that the set $U$ is fixed independent of $l,\alpha$, but only depends on $\mathcal{K}$.

Since $\mathcal{K}$ is a closed convex subset of $\Delta_y^0$,
 for any $p\in \mathcal{K}$, we can find a sequence of $(l, p(\alpha))\in l \mathcal{K}\cap \Z^{n+1}$ such that $ l^{-1} p(\alpha)\to p\in \mathcal{K}$ as $l\to +\infty$. We note that for $l\gg 1$, then $
 l \mathcal{K}\cap \Z^n= \Gamma_{y,l}\cap l\mathcal{K}$ by item 1 in Cor. \ref{cor:semigroup}. Thus
 \[
 \begin{split}
 c(x,p; y)= & \lim_{l\to +\infty, l^{-1}p(\alpha)\to p}  l^{-1} ( \log |\theta_\alpha^l |(x)- \log |\theta_\alpha^l | (y) )
 \\
 =& \lim_{l\to +\infty, l^{-1}p(\alpha)\to p}  l^{-1}  \langle p(\alpha), x-y\rangle
 \\
 = & \langle  p, x-y\rangle
 \end{split}
 \]
for all  $x\in U$.  This proves item 1.

\item 	Item 2 clearly follows from item 1.

\item

We return to  $\theta_\alpha^l$ for $l^{-1}p(\alpha)\in \mathcal{K}$ and $l\gg 1$. Near the depth $(n+1)$ intersection stratum $E_J$ corresponding to the $n$-dimensional face $\Delta_J\subset Sk(X)$, we can  Taylor expand 
\[
\theta_\alpha^l= \sum_\gamma c_\gamma  z^\gamma,\quad c_\gamma \in \C.
\]
Since $U$ is an open convex polyhedral subset of $\Delta_J^0\subset Sk(X)$, such that $\log |\theta_\alpha^l |$ is affine linear on $U$ with gradient $p(\alpha)$ by (\ref{eqn:tropicalthetaaffinelinear}), we can 
find a unique leading exponent $\beta_\alpha$ in the Taylor expansion, and it satisfies $p(\alpha)=- \beta_\alpha \in \Z^{n+1}/\Z(1,\ldots 1)\simeq \Z^n$. Upon writing
\[
\theta_\alpha^l= c_{\alpha, \beta} z^{\beta_\alpha}+ \sum_{\gamma\neq \beta_\alpha}  c_\gamma  z^\gamma.
\]
then
\[
\log |  \sum_{\gamma\neq \beta_\alpha}  c_\gamma  z^\gamma | (x)<  \log |\theta_\alpha^l | (y) + \langle p(\alpha), x-y\rangle,  \quad \forall x\in U\subset Sk(X),
\]
hence
\[
\log |  \sum_{\gamma\neq \beta_\alpha}  c_\gamma  z^\gamma | (x)<  \log |\theta_\alpha^l | (y) + \langle p(\alpha), x-y\rangle,  \quad \forall x\in r_\mathcal{X}^{-1}(U).
\]
The ultrametric property then implies that for any $x\in r_\mathcal{X}^{-1}(U)$,
\begin{equation}
\log |\theta_\alpha^l| (x)-  \log |\theta_\alpha^l| (y) = \log |c_{\alpha,\beta} z^{\beta_\alpha} |(x)- \log |\theta_\alpha^l| (y) = \langle p(\alpha), r_\mathcal{X}(x)- y\rangle .
\end{equation}

For any given $\epsilon>0$, similar to the proof of Prop. \ref{prop:maxextension}, we find some $\theta_\alpha^l$ with $l\gg 1$ and $l^{-1}p(\alpha)\in \mathcal{K}$, such that
\[
\sup_{x\in Sk(X)}| \frac{1}{l}  \log |\theta_\alpha^l | (x)-   \frac{1}{l}  \log |\theta_\alpha^l | (y) - c(x,p;y) |< \epsilon.
\]
Then the domination property of $\tilde{c}(x,p;y)$ implies that for any $x\in X_K^{an}$,
\[
\tilde{c}(x,p;y) \geq \frac{1}{l}  \log |\theta_\alpha^l | (x)-   \frac{1}{l}  \log |\theta_\alpha^l | (y) -  \epsilon.
\]
Taking the limit as $l\to +\infty$, over $x\in r_\mathcal{X}^{-1}(U)$, we have
\[
\tilde{c}(x,p;y) \geq  \lim_{l\to +\infty, l^{-1}p(\alpha)\to p} l^{-1}\langle p(\alpha), r_\mathcal{X}(x)- y\rangle - \epsilon = \langle p , r_\mathcal{X}(x)- y\rangle - \epsilon .
\]
Since $\epsilon>0$ is arbitrary, we obtain
\begin{equation}
\tilde{c}(x,p;y) \geq  \langle p , r_\mathcal{X}(x)- y\rangle ,\quad \forall x\in r_\mathcal{X}^{-1}(U).
\end{equation}

In the reverse direction,  recall  that any $\phi\in \text{CPSH}(X_K^{an},\mathcal{L})$ satisfies $\phi\leq \phi\circ r_\mathcal{X}$, hence over $x\in r_\mathcal{X}^{-1}(U)$
\[
\tilde{c}(x, p;y) \leq c(  r_{\mathcal{X}}  (x), p;y  )= \langle p,  r_\mathcal{X}(x)-  y\rangle.
\]
	Combining the above shows the equality
	\[
	\tilde{c}(x, p;y) = c(  r_{\mathcal{X}}  (x), p;y  )= \langle p,  r_\mathcal{X}(x)-  y\rangle.
	\]

		\end{enumerate}
	\end{proof}

		\begin{rmk}
		As a caveat, the convex domain $U$ of affine linearity depends on $\mathcal{K}$, and may shrink to a point when $\mathcal{K}$ increases to cover $\Delta_y^0$.

	\end{rmk}

We can naturally embed $\Delta_y^0$ as an open subset of $B_y$.

	\begin{prop}\label{prop:homeotoimage}(Embedding of $\Delta_y^0$)
	The assignment 
	\[
	\Delta_y^0\to B_y,\quad p\mapsto c(\cdot, p; y)
	\]
	is a homeomorphism onto an open subset of $B_y$. 
	\end{prop}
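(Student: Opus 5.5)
Write $\iota:\Delta_y^0\to B_y$, $\iota(p)=c(\cdot,p;y)$. By Prop. \ref{prop:limitingtropicalthetafromp}, $\iota(p)$ is a well defined element of $B_y$. We check, in order: injectivity, continuity, openness of the image, and continuity of the inverse. Throughout, $B_y$ carries the $C^0(Sk(X))$ topology.

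\emph{Injectivity.} Item 2 of Prop. \ref{prop:limitingtropicalthetafromp2} says that $c(\cdot,p;y)$ has the unique gradient $p$ at $y$. So $p$ is recovered as $(\nabla c(\cdot,p;y))(y)$, and $\iota$ is injective.

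\emph{Continuity.} Fix a compact convex $\mathcal{K}\subset\Delta_y^0$ containing the relevant points. By item 3 of Prop. \ref{prop:limitingtropicalthetafromp}, $c(x,p;y)$ is Lipschitz in $p\in\mathcal{K}$ for each fixed $x$. I would make this Lipschitz constant uniform in $x\in Sk(X)$. The functions $p\mapsto c(x,p;y)$ are convex, and they are uniformly bounded over $x$ by the uniform Lipschitz estimate (\ref{eqn:costfunctionLipschitz}) together with $c(y,p;y)=0$. A uniformly bounded family of convex functions is uniformly Lipschitz on a compact subset of the interior. Hence $\sup_x|c(x,p;y)-c(x,p';y)|\le C_{\mathcal{K}}|p-p'|$, which gives continuity of $\iota$.

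\emph{Openness of the image and continuity of the inverse.} This is the main step. Let $p_0\in\Delta_y^0$, and choose a closed ball $\mathcal{K}$ around $p_0$ inside $\Delta_y^0$. Let $U$ be the neighbourhood of $y$ from item 1 of Prop. \ref{prop:limitingtropicalthetafromp2}, on which $c(x,p;y)=\langle p,x-y\rangle$ for all $p\in\mathcal{K}$.

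Take $q\in B_y$ that is $C^0$-close to $\iota(p_0)$. I would show $q=\iota(p)$ for some $p\in\mathcal{K}$ near $p_0$. By definition, $q$ is a $C^0$-limit of $l^{-1}(\log|\theta^l_\alpha|(\cdot)-\log|\theta^l_\alpha|(y))$ along some sequence of indices $\alpha=\alpha(l)$. The gradients $l^{-1}p(\alpha)$ lie in the bounded set $\Delta_y$, so after passing to a subsequence they converge to some $p\in\Delta_y$.

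It remains to show that $p$ is close to $p_0$, so that $p\in\mathcal{K}$. Then item 4 of Prop. \ref{prop:limitingtropicalthetafromp} gives $q=c(\cdot,p;y)$. The difficulty is that $C^0$-closeness of functions does not directly control gradients at $y$. The tool is convexity combined with the linearity of $\iota(p_0)$ on $U$. Each approximant $f_l=l^{-1}(\log|\theta^l_\alpha|(\cdot)-\log|\theta^l_\alpha|(y))$ is convex on $\Delta_J$, vanishes at $y$, and satisfies $f_l(x)\ge\langle l^{-1}p(\alpha),x-y\rangle$.

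Now take $x=y+rv$ with $|v|=1$ and $r$ fixed so that the ball of radius $r$ about $y$ lies in $U$. Since $q\approx\langle p_0,\cdot-y\rangle$ on $U$ within $\delta$, we get
\[
\langle l^{-1}p(\alpha)-p_0,v\rangle\le 2\delta/r
\]
for all unit $v$. Hence $|p-p_0|\le 2\delta/r$.

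So a $C^0$-neighbourhood of $\iota(p_0)$ of size $\delta\ll r\,\mathrm{dist}(p_0,\partial\mathcal{K})$ lies in $\iota(\mathcal{K})$, which shows the image is open. The same estimate, $|p-p'|\le 2\|\iota(p)-\iota(p')\|_{C^0}/r$ locally, gives continuity of $\iota^{-1}$.

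The obstacle I expect is this gradient-control step. It needs the uniform neighbourhood $U$, which is exactly why item 1 of Prop. \ref{prop:limitingtropicalthetafromp2} matters. I must also check that the limit $p$ is well defined even though $q$ is only a subsequential limit. This is handled because the bound forces every subsequential limit of $l^{-1}p(\alpha)$ into a small ball around $p_0$ inside $\Delta_y^0$.
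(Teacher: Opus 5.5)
Your proposal is correct and follows essentially the paper's route. You take an approximating sequence for a nearby $q\in B_y$, extract a limit $p\in\Delta_y$ of the gradients $l^{-1}p(\alpha)$ (which is a subgradient of $q$ at $y$), conclude $q=c(\cdot,p;y)$ once $p\in\Delta_y^0$, and recover the inverse as the gradient at $y$. The one difference is that the paper appeals to upper semicontinuity of subdifferentials of uniformly convergent convex functions, whereas you use the affine linearity of $c(\cdot,p_0;y)$ on the uniform neighbourhood $U$ to get the explicit bound $|p-p_0|\le 2\delta/r$; this also gives a local Lipschitz bound for $\iota^{-1}$, and you make explicit the uniform-in-$x$ Lipschitz dependence on $p$ that the paper leaves implicit.
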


\begin{proof}
	We divide the proof into a few steps.
	\begin{enumerate}
		\item By Prop. \ref{prop:limitingtropicalthetafromp}, this assignment is well defined and depends continuously on $p\in \Delta_y^0$. 
		
		\item We claim the image of this assignment is an open subset in $B_y$. Any function $c(x,p';y)$ in $ B_y$ is a $C^0$-subsequential limit for $  l^{-1}(\log |\theta_\alpha^l|(x)- \log |\theta_\alpha^l |(y) )$  as $l\to +\infty$. After passing to a further subsequence, without loss $l^{-1} p(\alpha)$ converges to some $p_\infty\in \Delta_y$. If $p_\infty\in \Delta_y^0$, then $c(x,p';y)= c(x, p_\infty; y)$ lies in the image. Otherwise $p_\infty\in \partial \Delta_y$. But since $  l^{-1}(\log |\theta_\alpha^l|(x)- \log |\theta_\alpha^l |(y) )$ has gradient $l^{-1}p(\alpha)$ at $y$, upon passing to the $C^0$-limit for this sequence of convex functions, we deduce that $p_\infty$ belongs to the subgradient set of $c(x,p';y)$ at the point $x=y$.

		The upshot is that any function $c(x,p';y)$ in $ B_y$ lies in the image, if and only if its gradient at $y$ is contained in $\Delta_y^0$. This is an open property under the $C^0$-topology for uniformly Lipschitz convex functions, so the image is indeed open.

		\item By Prop. \ref{prop:limitingtropicalthetafromp2}, the inverse map from the image back to $\Delta_y^0$, is given by taking the gradient at the point $y$, and this inverse is continuous with respect to the $C^0$-topology on $B_y$, because for a sequence of convex functions with uniform Lipschitz bounds, the limit of their gradients at $y$ is a subgradient of the limiting convex function.
	\end{enumerate}
\end{proof}

	\begin{rmk}
	From the definition of $B_y$ as a subset of $C^0(Sk(X))$, it is only a compact topological space under the $C^0$-topology on functions, but has no a priori linear structure. On the other hand, $\Delta_y^0$ is an open convex set, so one can take linear combinations. By Lemma \ref{lem:changey}, one can canonically identify the $B_y$ for different choices of sufficiently irrational $y\in Sk(X)$.	It is curious what is the relationship between different $\Delta_y^0\subset B_y$ when $y$ varies. To answer this question, one presumably needs more information  on the cost function $c(x,p;y)$.
	\end{rmk}

	\begin{rmk}
		As a caveat, the inclusion map $\Delta_y^0\to B_y$ need not extend to a homeomorphism between the compact sets $\Delta_y\to B_y$. First, due to the ignorance of the uniform Lipschitz estimate in $p\in \Delta_y$ near the boundary $\partial \Delta_y$, we do not know if item 1 in Prop. \ref{prop:limitingtropicalthetafromp} still holds for $p\in \partial \Delta_y$. Second, some functions in $B_y$ (but not in the image of $\Delta_y^0$) may have non-unique subgradients at  $y\in Sk(X)$. We expect $B_y$ is in fact homeomorphic to the essential skeleton of some mirror CY family, which is topologically $S^n$ in the case of strict CY manifolds (See \cite[Sections 5,6]{Litheta}). In contrast, $\Delta_y$ is a convex set, hence contractible.
	\end{rmk}

	\begin{cor}
 Let $y\in Sk(X)$ be a sufficiently irrational point. Suppose a given function $\phi \in \mathcal{P}_c$ has a unique gradient  $\nabla \phi(y)=p$, and $p\in \Delta_y^0$. Then 
 $\phi(y)=  -\phi^c(p)$, and $\phi(x)\geq \phi(y)+ c(x,p;y)$ for any $x\in Sk(X)$.

	\end{cor}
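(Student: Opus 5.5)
By Lemma \ref{lem:ctransformmax}, $\phi^c(p)=\max_{x\in Sk(X)} c(x,p;y)-\phi(x)$, and since $c(y,p;y)=0$ this gives $\phi^c(p)\geq -\phi(y)$ at once. So the plan is to prove the reverse inequality $\phi^c(p)\leq -\phi(y)$. That inequality is the same as
\[
\phi(x)\geq \phi(y)+c(x,p;y)\quad \forall x\in Sk(X),
\]
so both claims reduce to this single global inequality.

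To prove it I would use the other half of Lemma \ref{lem:ctransformmax}: there is $q\in B_y$ with $\phi(y)=c(y,q;y)-\phi^c(q)=-\phi^c(q)$. For every $x$ we have $\phi(x)\geq c(x,q;y)-\phi^c(q)=\phi(y)+c(x,q;y)$. It therefore suffices to show $c(\cdot,q;y)=c(\cdot,p;y)$, i.e.\ that $q$ is the point of $\Delta_y^0\subset B_y$ corresponding to $p$ under Prop. \ref{prop:homeotoimage}.

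First, $q$ is a subgradient of $\phi$ at $y$. The function $c(\cdot,q;y)$ is convex on the face $\Delta_J$, and the function $\phi-\phi(y)-c(\cdot,q;y)$ is $\geq 0$ and vanishes at $y$. Hence every subgradient of $c(\cdot,q;y)$ at $y$ is a subgradient of $\phi$ at $y$. Since $\nabla\phi(y)=\{p\}$, the subgradient set of $c(\cdot,q;y)$ at $y$ is exactly $\{p\}$, because that set is nonempty and contained in $\{p\}$.

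By step 2 of the proof of Prop. \ref{prop:homeotoimage}, a function in $B_y$ lies in the image of $\Delta_y^0$ if and only if it has a subgradient at $y$ in $\Delta_y^0$. More precisely, the limit $p_\infty$ of $l^{-1}p(\alpha)$ along the approximating sequence is a subgradient. Here $p_\infty=p\in\Delta_y^0$, so $c(\cdot,q;y)=c(\cdot,p;y)$, which closes the argument. I would also note $\phi^c(q)=\phi^c(p)$, which yields $\phi(y)=-\phi^c(p)$ directly.

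The main obstacle is this identification step. One must check that the limit point $p_\infty$ from the approximating tropical theta functions coincides with $p$, not merely that some subgradient lies in $\Delta_y^0$. This follows because the whole subgradient set is the singleton $\{p\}$. One must also check that Prop. \ref{prop:limitingtropicalthetafromp} (item 1) then identifies the limit function with $c(\cdot,p;y)$, independently of the subsequence.
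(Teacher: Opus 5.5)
Your proposal is correct and follows the paper's proof. You take a point $q\in B_y$ conjugate to $y$ via Lemma~\ref{lem:ctransformmax}, use step 2 of the proof of Prop.~\ref{prop:homeotoimage} to identify $c(\cdot,q;y)$ with $c(\cdot,p;y)$, and then read off both claims from the definition of the $c$-transform; along the way you make explicit the step the paper leaves implicit, that every subgradient of $c(\cdot,q;y)$ at $y$ is a subgradient of $\phi$ at $y$, which forces $p_\infty=p\in\Delta_y^0$.
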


	\begin{proof}
	By Lemma \ref{lem:ctransformmax}, there must be some  $p'\in B_y$ such that \[
	\phi(y)= c(y,p'; y)- \phi^c(p')= -\phi^c(p').\] By the proof of Prop. \ref{prop:homeotoimage} item 2,  because the gradient $p=\nabla \phi(y)$ is contained in $\Delta_y^0$, the function $c(x,p';y)$ in $B_y$ must lie in the embedding image of $\Delta_y^0$, and $p=p'$. This shows $\phi(y)= -\phi^c(p)$. 
	
	Now the definition of $c$-transform implies
	\[
	\phi(x)\geq c(x,p;y)- \phi^c(p)= c(x,p;y) + \phi(y),
	\]
for any $x\in Sk(X)$.
	\end{proof}

	The interpretation is that if we prescribe the value $\phi(y)$ and the gradient $p$ at $y$, then the function $ \phi(y)+ c(x,p;y)$ minimises among all $\phi\in \mathcal{P}_c$ subject to this prescription. Intuitively $c(x,p;y)$ plays the role of the linear functions in classical convex function theory.

	\subsection{A factorisation criterion for the retraction map}\label{sec:factorisation}

	By Prop. \ref{prop:maxextension}, \ref{prop:maxextension2}, we identify the functions in $\mathcal{P}_c$ with the potentials in $\text{CPSH}(X_K^{an},\mathcal{L})$ satisfying the domination property. We shall prove a criterion for a potential to factorise under the retraction map $r_\mathcal{X}: X_K^{an}\to \Delta_{\mathcal{X}}$ over some open subset of $Sk(X)$.

\begin{prop}\label{prop:factorisation}
Suppose  $\phi \in \text{CPSH}(X_K^{an},\mathcal{L})$ satisfies the domination property (\ref{eqn:dominationproperty}). 
Suppose $y\in Sk(X)$ is a sufficiently irrational point, such that its subgradient set $\nabla \phi(y)\subset \mathcal{C}_y\subset \R^n$
is contained in the interior of the Okounkov body $\Delta_y^0$. Then there exists some open neighbourhood $y\in U_y\subset  Sk(X)$, such that  $\phi= \phi\circ r_{\mathcal{X}}$  holds on  $r_\mathcal{X}^{-1}(U_y)$.
\end{prop}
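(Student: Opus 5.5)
Since $\nabla\phi(y)$ is compact and contained in the open set $\Delta_y^0$, I first choose a compact convex subset $\mathcal{K}\subset\Delta_y^0$ whose interior contains $\nabla\phi(y)$. Prop. \ref{prop:limitingtropicalthetafromp2} applied to $\mathcal{K}$ then gives an open convex polyhedral neighbourhood $U$ of $y$ in $\Delta_J^0$ with two properties. For every $p\in\mathcal{K}$ we have $c(x,p;y)=\langle p,x-y\rangle$ on $U$. Moreover, the maximal psh extension $\tilde c(\cdot,p;y)$ factors through $r_{\mathcal{X}}$ over $U$. The strategy is to write $\phi$ near $y$ as a supremum of the functions $\phi^{c}$-shifted costs $c(\cdot,p;y)-\phi^c(p)$ with $p$ ranging only over $\mathcal{K}$. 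Each of these factors through the retraction, so their supremum will too.

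\textbf{Step 1 (localising the supporting parameters).} By Lemma \ref{lem:ctransformmax}, for every $x\in Sk(X)$ there is some $p_x\in B_y$ with $\phi(x)=c(x,p_x;y)-\phi^c(p_x)$, and $c(\cdot,p_x;y)-\phi^c(p_x)\le \phi$ on all of $Sk(X)$. Such a function touches $\phi$ from below at $x$. Since both functions are convex on $\Delta_J$, every subgradient of $c(\cdot,p_x;y)$ at $x$ lies in $\nabla\phi(x)$. I claim that for $x$ close to $y$ the chosen $p_x$ lies in the image of $\mathcal{K}$ under the embedding of Prop. \ref{prop:homeotoimage}. The argument is by contradiction. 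Take $x_i\to y$ with $p_{x_i}$ outside this image, and pass to a subsequence with $p_{x_i}\to p_\infty$ in the compact set $B_y$. The limit $c(\cdot,p_\infty;y)-\phi^c(p_\infty)$ touches $\phi$ at $y$, because $\phi^c$ is continuous. By upper semicontinuity of subgradients of uniformly Lipschitz convex functions under $C^0$ limits, its subgradient set at $y$ lies in $\nabla\phi(y)\subset \text{Int}\,\mathcal{K}$. The proof of Prop. \ref{prop:homeotoimage}, item 2, then shows that $p_\infty=c(\cdot,p;y)$ for some $p\in\text{Int}\,\mathcal{K}$. Since the embedding has open image and is a homeomorphism, $p_{x_i}$ lies in the image of $\text{Int}\,\mathcal{K}$ for large $i$, which is a contradiction. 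Hence there is a smaller neighbourhood $U_y\subset U$ of $y$ on which
\[
\phi(x)=\max_{p\in\mathcal{K}}\bigl(c(x,p;y)-\phi^c(p)\bigr)=\max_{p\in\mathcal{K}}\bigl(\langle p,x-y\rangle-\phi^c(p)\bigr),\qquad x\in U_y .
\]

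\textbf{Step 2 (extension off the skeleton).} For each $p\in\mathcal{K}$, the function $\tilde c(\cdot,p;y)-\phi^c(p)$ lies in $\text{CPSH}(X_K^{an},\mathcal{L})$ and is bounded above by $\phi$ on $Sk(X)$. The domination property (\ref{eqn:dominationproperty}) therefore gives $\phi\ge \tilde c(\cdot,p;y)-\phi^c(p)$ on all of $X_K^{an}$. On $r_{\mathcal{X}}^{-1}(U_y)$ the right-hand side equals $c(r_{\mathcal{X}}(x),p;y)-\phi^c(p)$ by Prop. \ref{prop:limitingtropicalthetafromp2}, item 3. Taking the maximum over $p\in\mathcal{K}$ and using Step 1 gives $\phi\ge\phi\circ r_{\mathcal{X}}$ on $r_{\mathcal{X}}^{-1}(U_y)$. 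The reverse inequality $\phi\le\phi\circ r_{\mathcal{X}}$ holds for every $\phi\in\text{CPSH}(X_K^{an},\mathcal{L})$, as already used in the proof of Prop. \ref{prop:limitingtropicalthetafromp2}. This proves the factorisation.

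\textbf{Main obstacle.} I expect the main difficulty to be Step 1, namely controlling which $p\in B_y$ realise the $c$-transform maximum at points near $y$. This requires two facts. First, every $p'\in B_y$ whose cost function has all its subgradients at $y$ inside $\Delta_y^0$ must come from $\Delta_y^0$; this is the content of the proof of Prop. \ref{prop:homeotoimage}. Second, subgradients must behave semicontinuously under touching and under $C^0$ limits. Some care is also needed because the touching point is $x\ne y$ while the subgradient is examined at $y$. The limiting argument above is designed to handle exactly this, and it is the place where the interior hypothesis $\nabla\phi(y)\subset\Delta_y^0$ is used.
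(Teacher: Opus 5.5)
Your proof is correct. Your Step 2 is exactly the paper's Steps 5--6. The difference is in how you show that the conjugate parameters near $y$ come from $\mathcal{K}$.

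\textbf{The paper's route.} It first shrinks to a neighbourhood $U_y$ on which $\nabla\phi(y')\subset\text{Int}(\mathcal{K})$ for every $y'$, using upper semicontinuity of the subdifferential of the convex function $\phi$. Then, for each $y'\in U_y$ and a conjugate $p'$, it works directly with an approximating sequence $\frac{1}{l}\log|\theta_\alpha^l|$. It uses $\Gamma_{y,l}\cap l\mathcal{K}=\Z^n\cap l\mathcal{K}$, the affine linearity (\ref{eqn:tropicalthetaaffinelinear}) on $U$, and the distinctness of leading exponents. From these it shows that if the normalised gradients $l^{-1}p(\alpha)$ at $y$ do not eventually lie in $\mathcal{K}$, then some normalised subgradient at $y'$ converges to a point of $\nabla\phi(y')$ outside $\text{Int}(\mathcal{K})$, a contradiction. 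This forces it to handle subgradients at $y'$, which need not be sufficiently irrational and may sit on walls of $\mathcal{S}_l$.

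\textbf{Your route.} You give a soft compactness argument in $B_y$. The touching relation passes to the limit $x_i\to y$, by continuity of $\phi^c$ and the uniform Lipschitz bound on the cost functions. So you only ever examine subgradients at the sufficiently irrational base point $y$. You then invoke two facts from the proof of Prop. \ref{prop:homeotoimage}: an element of $B_y$ lies in the image of $\Delta_y^0$ exactly when its subgradients at $y$ lie in $\Delta_y^0$, and the image of $\text{Int}(\mathcal{K})$ is open in $B_y$.

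\textbf{Comparison.} Your argument uses the hypothesis $\nabla\phi\subset\text{Int}(\mathcal{K})$ only at $y$ itself. It sidesteps the wall and leading-exponent bookkeeping at the nearby points $y'$. The cost is a non-explicit $U_y$ and reliance on Prop. \ref{prop:homeotoimage}.

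One small correction: the containment of the subgradients of $c(\cdot,p_\infty;y)$ at $y$ in $\nabla\phi(y)$ follows directly from that function touching $\phi$ from below at $y$. It does not need semicontinuity of subgradients.
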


	\begin{proof}
		We divide the proof into a few steps.
		
	\begin{enumerate}
		\item  We take a closed convex set $\mathcal{K}\subset \Delta_y^0$, which contains the subgradient set $\nabla \phi(y)$ in its interior. (Here we do not need to assume $\nabla \phi(y)$ is a singleton). We can find some open convex polyhedral neighbourhood $y\in U$ depending on $\mathcal{K}$, such that the conclusions of Prop. \ref{prop:limitingtropicalthetafromp2} hold, and in particular
		\[
		c(x,p;y)= \langle p, x-y\rangle,\quad \forall x\in U, \quad \forall p\in \mathcal{K}.
		\]
		Since $\nabla \phi(y)\subset \text{Int}(\mathcal{K})$, and $\phi$ is a convex function on $U$, we can find an open neighbourhood $y\in U_y\subset U$, such that 
		\[
		\nabla \phi(x)\subset \text{Int}(\mathcal{K}),\quad \forall x\in U_y.
		\]

		\item We consider any $y'\in U_y$, which may not be sufficiently irrational. By Lemma \ref{lem:ctransformmax}, there is some $p'\in B_y$, such that 
		\[
		\phi(y')= c(y',p';y) - \phi^c(p').
		\]
	By the definition of $B_y$, the function $c(\cdot, p'; y)$ is a $C^0$-limit for some sequence
		\[
		   \frac{1}{l}  \log |\theta_\alpha^l | (x)-   \frac{1}{l}  \log |\theta_\alpha^l | (y).
		\]
		We focus on  large $l\gg 1$, then Cor. \ref{cor:semigroup} item 1 implies $\Gamma_{y,l}\cap l\mathcal{K}= \Z^n \cap  l\mathcal{K}$.

	\item 
			Recall that if $p(\alpha)= (\nabla \log |\theta_\alpha^l|)(y)\in \Gamma_{y,l}\cap l \mathcal{K}=   \Z^n \cap  l\mathcal{K}  $, then by (\ref{eqn:tropicalthetaaffinelinear}),
		\[
		\log |\theta_\alpha^l |(x)- \log |\theta_\alpha^l | (y) = \langle p(\alpha), x-y\rangle, \forall x\in U_y,
		\]
		and in particular $(\nabla 	\log |\theta_\alpha^l | )(y')= p(\alpha) \in \Z^n \cap l \mathcal{K}$.
		This construction exhausts all the $\theta_\alpha^l$ such that 
		$(\nabla 	\log |\theta_\alpha^l | )(y')\in \Z^n \cap l \mathcal{K}$.

\item

			We claim that the function $c(\cdot, p'; y)$ in fact lies in the image of the embedding $\mathcal{K} \subset \Delta_y^0 \to B_y$ (\cf Prop. \ref{prop:homeotoimage}).

	In the first case,	suppose there is a subsequence with $p(\alpha)  \in \Gamma_{y,l}\cap l \mathcal{K}= \Z^n \cap  l\mathcal{K}$, then by taking the limit $l\to+\infty$, the claim would follow.

Otherwise, $p(\alpha)\notin \Z^n\cap l\mathcal{K}$ for all large enough $l$.
As a caveat, here we allow the possibility for $y'$ to lie on some codimension one wall in $\mathcal{S}_l$, in which case $\log |\theta_\alpha^l | $ may have non-unique subgradient at $y'$.
 Nontheless, if all its subgradients at $y'$ lie in $\Z^n\cap l\mathcal{K}$, then by the 
 distinctness of the leading order exponent (\cf Lemma \ref{lem:valuativeindependence2}), our $\theta_\alpha^l$ must coincide with one of those listed in Step 3, hence falls within the first case, contradiction.

Thus for $l\gg 1$, we 
 can find $p_l\in \nabla \log |\theta_\alpha^l | (y')$ such that $p_l\in  \Z^n\cap l (\Delta_y\setminus    \mathcal{K})$. Thus $l^{-1}p_l\in  \Delta_y \setminus \text{Int}(\mathcal{K})$, and after passing to subsequence, 
 \[
 l^{-1}p_l\to p_\infty\in    \Delta_y \setminus \text{Int}(\mathcal{K}),
 \]
 by the closedness of $ \Delta_y \setminus \text{Int}(\mathcal{K}).$
Upon taking the $C^0$-limit for the convex functions 
\[
 \frac{1}{l}  \log |\theta_\alpha^l | (x)-   \frac{1}{l}  \log |\theta_\alpha^l | (y)\to  c(x,p';y),
\]
we deduce that $p_\infty$ is a subgradient for $c(x,p'; y)$ at $x=y$. 
Hence on the $n$-dimensional open face $\Delta_J^0\subset Sk(X)$, we have
\[
c(x, p'; y)- c(y', p'; y) \geq \langle p_\infty, x-y'\rangle.
\]

By the definition of $c$-transform, for any $x\in \Delta_J^0\subset Sk(X)$, 
\[
\phi(x) \geq c(x,p'; y)- \phi^c(p') \geq c(y', p'; y)+  \langle p_\infty, x-y'\rangle - \phi^c(p').
\]
Using Step 2, 
\[
\phi(x) \geq c(y', p'; y)+  \langle p_\infty, x-y'\rangle - \phi^c(p') =  \phi(y') +  \langle p_\infty, x-y'\rangle .
\]
The upshot is that $p_\infty\in \nabla \phi(y')\notin \text{Int}(\mathcal{K})$, which contradicts Step 1. This concludes the proof for the claim. Thus we can regard $p'$ as an element in $\mathcal{K}\subset \Delta_y^0$.

\item  Consequently, by Prop. \ref{prop:limitingtropicalthetafromp2}, we must have
\[
c(x,p'; y)= \langle p', x-y\rangle, \quad \forall x\in U_y,
\]
and the maximal psh extension $\tilde{c}(x, p'; y)$ satisfies
\[
\tilde{c}(x,p'; y)= \langle p', r_\mathcal{X}(x)  -y\rangle,\quad \forall x\in r_{\mathcal{X}}^{-1}(U_y)\subset X_K^{an}.
\]

\item  By the definition of the $c$-transform,
\[
\phi(x)\geq c(x,p'; y)- \phi^c(p'),\quad \forall x\in Sk(X).
\]
By the domination property of the  potential $\phi$, 
\[
\phi(x) \geq \tilde{c}(x,p'; y) - \phi^c(p'),\quad \forall x\in X_K^{an}.
\]
For any $x\in r_{\mathcal{X}}^{-1}(U_y)\subset X_K^{an}$, using Step 5,  this implies
\[
\phi(x) \geq c( r_\mathcal{X}(x) , p';y  )- \phi^c(p').
\]
In particular, if $r_{\mathcal{X}}(x)=y'$, then using Step 2,
\[
\phi(x) \geq c( y', p';y  )- \phi^c(p')= \phi(y').
\]
Since this holds for any $y'\in U_y$, we conclude that $\phi \geq \phi\circ r_{\mathcal{X}}$ on $r_{\mathcal{X}}^{-1}(U_y)\subset X_K^{an}.$ The reverse inequality $\phi\leq \phi\circ r_{\mathcal{X}}$ is automatic for any semipositive potentials, hence $\phi= \phi\circ r_{\mathcal{X}}$ on $r_{\mathcal{X}}^{-1}(U_y)\subset X_K^{an}.$
This concludes the proof for the factorisation criterion.

	\end{enumerate}

	\end{proof}

	\subsection{Relative volume and Monge-Amp\`ere energy}\label{sec:relativevolume}

	We shall derive a formula for the Monge-Amp\`ere energy via the relative volume  (\cf Section \ref{sect:relativevolume}), similar to \cite[Section 5.4]{Litheta}.
We begin with some formula for Fubini-Study norms. Let $y\in Sk(X)$ be any given sufficiently irrational point.

	\begin{lem}\label{lem:FSdiagonal}
		Suppose  $\phi\in \text{CPSH}(X_K^{an},\mathcal{L})$ satisfies  the domination property (\ref{eqn:dominationproperty}).
		For $l$ large enough so that the $K$-vector space $H^0(X_K,lL)$ generates the line bundle $L^{\otimes l}\to X_K$, we define the Fubini-Study approximation
		\begin{equation}\label{eqn:phicl}
			\phi^c_l(\alpha)= \max_{x\in Sk(X)} (  \frac{1}{l} \log |\theta_\alpha^l|(x)-  \frac{1}{l} \log |\theta_\alpha^l|(y)-   \phi(x)),\quad \forall \alpha.
		\end{equation}
		Then the NA norms $\norm{\cdot}_{l\phi}$ on  $H^0(X_K,lL)$   is given by (See section \ref{sect:relativevolume})
		\[
		\norm{ \sum_{\alpha } a_\alpha \theta_\alpha^l }_{l\phi}= \max_\alpha |a_\alpha | e^{l \phi^c_l(\alpha) + \log |\theta_\alpha^l|(y) },\quad \forall a_\alpha \in K.
		\]
	\end{lem}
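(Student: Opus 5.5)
The plan is to compute the supremum norm $\norm{s}_{l\phi}=\sup_{X_K^{an}}|s|_{\mathcal{L}}e^{-l\phi}$ in two steps. First we reduce the supremum over $X_K^{an}$ to a supremum over $Sk(X)$, using the domination property. Then we evaluate it on $Sk(X)$ via valuative independence.

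\emph{Reduction to the skeleton.} Fix $s\in H^0(X_K,lL)$ nonzero and set $M=\sup_{Sk(X)}(\log|s|-l\phi)$. The function $v=l^{-1}(\log|s|-M)$ is a (degenerate) Fubini-Study type potential. More precisely, we write it as a decreasing limit of $l^{-1}\log\max(|s|,\epsilon|s_0|,\dots)$ using generating sections, or simply note that $l^{-1}\log|s|$ is psh in the sense of being dominated by FS potentials. It satisfies $v\leq \phi$ on $Sk(X)$. The domination property (\ref{eqn:dominationproperty}) then gives $v\leq\phi$ on all of $X_K^{an}$. To stay within the class $\text{CPSH}$, we apply domination to the FS potentials $l^{-1}\max(\log|s|,\log|s'|-C)-M'$, where $s'$ ranges over a generating set and $C\to\infty$. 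These are genuine elements of $\text{CPSH}(X_K^{an},\mathcal{L})$, and on $Sk(X)$ they differ from $l^{-1}(\log |s|-M)$ by an amount going to $0$. Letting $C\to\infty$ yields $\log|s|-l\phi\leq M$ everywhere. Hence $\norm{s}_{l\phi}=e^{M}$, since the reverse inequality is trivial. I expect this step to be the main (though mild) obstacle, namely justifying the use of domination for the non-continuous potential $\log|s|$. The approximation by honest Fubini-Study potentials, with $l$ large so that $H^0(X_K,lL)$ generates $L^{\otimes l}$, is exactly where that hypothesis enters.

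\emph{Evaluation on the skeleton.} Write $s=\sum_\alpha a_\alpha\theta^l_\alpha$. By valuative independence, on $Sk(X)$ we have $\log|s|(x)=\max_\alpha(\log|a_\alpha|+\log|\theta^l_\alpha|(x))$. Hence
\[
M=\max_\alpha\Big(\log|a_\alpha|+\max_{x\in Sk(X)}(\log|\theta_\alpha^l|(x)-l\phi(x))\Big),
\]
and we have exchanged the two maxima. By definition (\ref{eqn:phicl}),
\[
\max_{x}(\log|\theta^l_\alpha|(x)-l\phi(x))=l\phi^c_l(\alpha)+\log|\theta^l_\alpha|(y).
\]
The maximum over $x$ is attained since everything is continuous on the compact set $Sk(X)$. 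Exponentiating gives
\[
\norm{\sum_\alpha a_\alpha\theta_\alpha^l}_{l\phi}=\max_\alpha|a_\alpha|e^{l\phi^c_l(\alpha)+\log|\theta^l_\alpha|(y)},
\]
which is the claim. It also shows that $\{\theta^l_\alpha\}$ is an orthogonal basis for $\norm{\cdot}_{l\phi}$.
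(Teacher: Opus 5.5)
Your proof is correct, but it handles the upper bound differently from the paper.

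\textbf{Your route.} You apply the domination property section by section. For each $s$, the potential $l^{-1}\max\big(\log|s|,\max_{s'}(\log|s'|-C)\big)-l^{-1}M$ is an honest Fubini--Study potential. For $C$ large it in fact coincides \emph{exactly} with $l^{-1}(\log|s|-M)$ on $Sk(X)$: points of $Sk(X)$ are genuine valuations, so $\log|s|$ is finite and continuous on the compact set $Sk(X)$, while each $\log|s'|$ is bounded above there. So no limiting argument is needed, and the step you flagged as the main obstacle is harmless. This yields the basis-free identity $\norm{s}_{l\phi}=\max_{Sk(X)}|s|e^{-l\phi}$. After that, valuative independence is used only on $Sk(X)$.

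\textbf{The paper's route.} The paper obtains the lower bound exactly as you do. For the upper bound it builds a single Fubini--Study potential from the basis itself,
$\phi_l=\max_\alpha\big(l^{-1}\log|\theta^l_\alpha|-l^{-1}\log|\theta^l_\alpha|(y)-\phi^c_l(\alpha)\big)$.
Since the basis already generates $L^{\otimes l}$, no auxiliary sections are needed. The paper applies domination once to get $\phi_l\leq\phi$ on $X_K^{an}$, deduces $\norm{\theta^l_\alpha}_{l\phi}\leq e^{l\phi^c_l(\alpha)+\log|\theta^l_\alpha|(y)}$ for each basis element, and finishes with the ultrametric inequality.

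\textbf{Comparison.} Your route isolates a cleaner intermediate statement: for potentials with the domination property, the sup-norm is computed on the skeleton alone. This makes transparent that orthogonality of $\{\theta^l_\alpha\}$ for $\norm{\cdot}_{l\phi}$ is inherited from valuative independence on $Sk(X)$. The paper's route is slightly more economical, since it needs only one application of domination and no auxiliary generating sections.
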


	\begin{proof}
		Let $s=\sum a_\alpha \theta_\alpha^l\in H^0(X_K,lL)$, where $a_\alpha\in K$. By definition
		\[
		\norm{s}_{l\phi}= \sup_{X_K^{an}} |s| e^{-l\phi},
		\]
		where $|s|$ is defined by the model line bundle $\mathcal{L}\to \mathcal{X}$. The valuative independence condition implies that
		\[
		|s|= \max_{\alpha } |a_\alpha| |\theta_\alpha^l| \quad \text{on $Sk(X)$},
		\]
		so by the definition of $\phi^c_l(\alpha)$, we have
		\[
		\norm{s}_{l\phi} \geq \max_{Sk(X) }   |s| e^{-l\phi} =\max_{x\in Sk(X)} \max_\alpha |a_\alpha| |\theta_\alpha^l| e^{-l\phi} =\max_{\alpha } |a_\alpha| e^{ l\phi^c_l(\alpha)+ \log |\theta_\alpha^l|(y)}.
		\]

		We now show the reverse inequality. We define the Fubini-Study potential on $X_K^{an}$, 
		\[
		\phi_l=\max_{\alpha}  l^{-1} \log |\theta_\alpha^l|   - l^{-1} \log |\theta_\alpha^l|(y)  - \phi^c_l(\alpha),
		\]		
		so $\phi_l\leq \phi$ on $Sk(X)$, whence $\phi_l\leq \phi$ on $X_K^{an}$ by the domination property of $\phi$. Thus
		\[
		\norm{ \theta_\alpha^l}_{l\phi} =  \sup_{X_K^{an}} |\theta_\alpha^l|  e^{-l\phi}
		\leq \sup_{X_K^{an}} |\theta_\alpha^l|  e^{-l\phi_l}\leq e^{l\phi_l^c(\alpha)  + \log |\theta_\alpha^l|(y) },
		\]
		so the ultrametric inequality implies
		\[
		\norm{s}_{l\phi} \leq \max_\alpha |a_\alpha| \norm{\theta_\alpha^l}_{l\phi} =\max_ \alpha |a_\alpha | e^{l\phi_l^c(\alpha) + \log |\theta_\alpha^l|(y) },
		\]
		completing the proof of the reverse inequality. 
	\end{proof}

	\begin{Notation}
		Let $y$ be any given sufficiently irrational point in $Sk(X)$.
		The convex set $\Delta_y^0\subset \R^n$ has a canonical Lebesgue measure determined by its integral structure.
	We define the measure $\tilde{\nu}$ on $B_y$ as the pushforward of this Lebesgue measure via the embedding $\Delta_y^0\to B_y$ in Prop. \ref{prop:homeotoimage}, which in particular puts zero measure on the complement of the image of this embedding. We write $\tilde{\nu}= \frac{(L^n)}{n!} \nu$. By Cor. \ref{cor:semigroup}, the Lebesgue volume of the Okounkov body is $\frac{(L^n)}{n!}$, hence $\nu$ is a probability measure.
	\end{Notation}

	We now present the formula for the Monge-Amp\`ere energy, following the approach of Boucksom-Eriksson \cite{Boucksomnew1}, which is inspired by the Chebyshev transform of Witt-Nystr\"om \cite{WittNystrom}.

	\begin{prop}\label{prop:MAenergy}
	Suppose $\phi\in \text{CPSH}(X_K^{an},\mathcal{L})$ satisfies the domination property (\ref{eqn:dominationproperty}).
		Then up to an additive normalisation constant, the Monge-Amp\`ere energy is
		\begin{equation}
		E(\phi)= -(L^n) \int_{B_y}  \phi^c(p) d\nu.
		\end{equation}
	
	\end{prop}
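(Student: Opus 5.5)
The plan is to compare $\phi$ with a fixed reference potential $\psi$ satisfying the domination property, and to compute $E(\phi)-E(\psi)$ through the Boucksom--Eriksson relative volume formula of Section \ref{sect:relativevolume}. By Lemma \ref{lem:FSdiagonal}, both norms $\norm{\cdot}_{l\phi}$ and $\norm{\cdot}_{l\psi}$ are diagonal in the valuative independent basis $\{\theta^l_\alpha\}$. Hence
\[
vol(\norm{\cdot}_{l\phi},\norm{\cdot}_{l\psi})= l\sum_\alpha \left(\psi^c_l(\alpha)-\phi^c_l(\alpha)\right),
\]
since the terms $\log|\theta^l_\alpha|(y)$ cancel. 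Then
\[
E(\phi)-E(\psi)=\lim_{l\to\infty} \frac{n!}{l^{n}}\sum_\alpha (\psi^c_l(\alpha)-\phi^c_l(\alpha)),
\]
and it suffices to show that $\frac{n!}{l^n}\sum_\alpha \phi^c_l(\alpha)\to (L^n)\int_{B_y}\phi^c\,d\nu=n!\int_{\Delta_y^0}\phi^c(c(\cdot,p;y))\,dp$. The same limit applies to $\psi$, so the difference formula yields $E(\phi)=-(L^n)\int \phi^c d\nu$ up to the constant $E(\psi)+(L^n)\int\psi^c d\nu$.

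The key step is the Riemann sum convergence. Index the $\alpha$ by $p(\alpha)\in\Gamma_{y,l}$, which is legitimate by Lemma \ref{lem:Gammay} item 1. Fix a compact convex $\mathcal{K}\subset\Delta_y^0$. By Prop. \ref{prop:limitingtropicalthetafromp} item 4, for $l^{-1}p(\alpha)\in\mathcal{K}$ the functions $l^{-1}(\log|\theta^l_\alpha|-\log|\theta^l_\alpha|(y))$ are uniformly $\epsilon_l$-close to $c(\cdot,l^{-1}p(\alpha);y)$. The $c$-transform is $1$-Lipschitz in sup norm (Lemma \ref{lem:Pc}), so $|\phi^c_l(\alpha)-\phi^c(l^{-1}p(\alpha))|\le\epsilon_l$. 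By Cor. \ref{cor:semigroup} item 1, $\Gamma_{y,l}\cap l\mathcal{K}=\Z^n\cap l\mathcal{K}$ for large $l$. Since $p\mapsto\phi^c(c(\cdot,p;y))$ is continuous on $\mathcal{K}$ by Prop. \ref{prop:homeotoimage} and Lemma \ref{lem:ctransformmax}, we get
\[
\frac{1}{l^n}\sum_{l^{-1}p(\alpha)\in\mathcal{K}}\phi^c_l(\alpha)\to\int_{\mathcal{K}}\phi^c(p)\,dp.
\]

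The main obstacle is controlling the contribution of the indices with $l^{-1}p(\alpha)\in\Delta_y\setminus\mathcal{K}$, near the boundary of the Okounkov body, where we have no uniform convergence. Here I would use only uniform boundedness. The quantities $|\phi^c_l(\alpha)|$ are bounded by $\sup|\phi|$ plus the uniform Lipschitz bound on $l^{-1}\log|\theta^l_\alpha|$ from Lemma \ref{lem:Lip} times the diameter of $Sk(X)$, and similarly $\phi^c$ is bounded on $B_y$. The number of such indices is $\dim_K H^0(X_K,lL)-|\Z^n\cap l\mathcal{K}|$, which by Riemann--Roch and Cor. \ref{cor:semigroup} item 3 is $l^n(\frac{(L^n)}{n!}-\text{Vol}(\mathcal{K})+o(1))$. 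This is small once $\mathcal{K}$ exhausts $\Delta_y^0$. Taking $l\to\infty$ first and then $\mathcal{K}\uparrow\Delta_y^0$ gives the claimed limit. Because $\nu$ puts no mass outside the image of $\Delta^0_y$ and is the normalised pushforward of Lebesgue measure, the result rewrites as the integral over $B_y$.
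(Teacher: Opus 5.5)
Your proposal is correct and follows essentially the same route as the paper. You compare with a reference dominated potential $\psi$, diagonalise both norms in $\{\theta^l_\alpha\}$ via Lemma \ref{lem:FSdiagonal}, and pass to a Riemann sum over $\Z^n\cap l\mathcal{K}$ using Prop. \ref{prop:limitingtropicalthetafromp} item 4 and Cor. \ref{cor:semigroup} item 1; you then control the indices outside $l\mathcal{K}$ by uniform boundedness and the volume count before exhausting $\Delta_y^0$, with only cosmetic differences: you use mere continuity of $\phi^c$ on the image of $\mathcal{K}$ (enough for Riemann sums) where the paper uses Lipschitz continuity, and your $\epsilon_l$-comparison of $c$-transforms perturbs the cost function rather than $\phi$ as in Lemma \ref{lem:Pc}, though the same one-line estimate applies.
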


		\begin{proof}
			We divide the proof into a few steps.

			\begin{enumerate}
				\item  	Take any other potential $\psi\in \text{CPSH}(X_K^{an},\mathcal{L})$ with the domination property. Then by Lemma \ref{lem:FSdiagonal}, the NA norms $\norm{\cdot}_{l\phi}$ versus $\norm{\cdot}_{l\psi}$ have a simultaneous orthogonal basis $\theta_\alpha^l$, namely for any $s=\sum_\alpha a_\alpha \theta_\alpha^l $ with $a_\alpha\in K$, 
				\[
				\norm{s }_{l\phi}= \max_\alpha |a_\alpha| e^{l \phi^c_l(\alpha) + \log |\theta_\alpha^l|(y)},\quad 
				\norm{s }_{l\psi}= \max_\alpha |a_\alpha | e^{l \psi^c_l(\alpha) + \log |\theta_\alpha^l|(y)}.
				\]
				Thus  the relative volume between the two NA norms  is (see Section \ref{sect:relativevolume})
				\begin{equation}\label{eqn:relativevolume1}
					vol(  \norm{\cdot}_{l\phi}, \norm{\cdot}_{l\psi}  )= \log \prod_{  \alpha } e^{-l \phi^c_l(\alpha)+l \psi^c_l(\alpha)} = l\sum_{ \alpha }  ( - \phi^c_l(\alpha)+\psi^c_l(\alpha) ).
				\end{equation}
				By Section \ref{sect:relativevolume} and (\ref{eqn:relativevolume1}), 
				\[
				\begin{split}
					E(\phi)-E(\psi)=& \lim_{l\to+\infty} \frac{n!}{  l^{n+1}  } 	vol(  \norm{\cdot}_{l\phi}, \norm{\cdot}_{l\psi}  )
					\\
					= & n!  \lim_{l\to+\infty} l^{-n} \sum_{  \alpha }  (-\phi^c_l(\alpha )+ \psi^c_l(\alpha)).
				\end{split}
				\]

				\item Let $\mathcal{K}\subset \Delta_y^0$ be any fixed closed convex polyhedral subset. We can partition $\{ \theta_\alpha^l \}$ according to whether $p(\alpha)=(\nabla \log |\theta_\alpha^l|)(y) \in \Gamma_{y,l}\cap l \mathcal{K}$.

				For all $p(\alpha)\in l \mathcal{K}\cap \Gamma_{y,l}$, by Prop. \ref{prop:limitingtropicalthetafromp2} item 4, we have the uniform convergence as $l\to +\infty$,
					\[
				\sup_{ x\in Sk(X) , l^{-1}p(\alpha) \in \mathcal{K} } |    l^{-1}(\log |\theta_\alpha^l|(x)- \log |\theta_\alpha|(y) )- c(x,  l^{-1} p(\alpha);y) | \leq \epsilon_l\to 0.
				\]
				Thus we can compare the $c$-transforms,
				\[
				|\phi_l^c(\alpha) - \phi^c( l^{-1} p(\alpha) ) | \leq \epsilon_l.
				\]
				Consequently, 
				\[
				\sum_{ \alpha: p(\alpha)\in l \mathcal{K} }  ( - \phi^c_l(\alpha)+\psi^c_l(\alpha) )= \sum_{ \alpha: p(\alpha)\in l\mathcal{K} \cap \Gamma_{y,l} }  ( - \phi^c( l^{-1} p(\alpha) )+\psi^c ( l^{-1}p(\alpha) ) + O(\epsilon_l)).
				\]
				The function $\phi^c(p)$ is Lipschitz continuous on $\mathcal{K}$ by Prop. \ref{prop:limitingtropicalthetafromp} item 3. Moreover, for large enough $l$, then $l \mathcal{K}\cap \Z^n= l \mathcal{K}\cap \Gamma_{y,l}$ by Cor. \ref{cor:semigroup} item 1.

				Thus the Riemann sum converges as $l\to +\infty$,
				\[
					l^{-n} \sum_{  p(\alpha)\in l\mathcal{K} }  \phi^c( l^{-1}p(\alpha))\to \int_\mathcal{K}  \phi^c(p)   d\tilde{\nu} = \frac{   ( L^n)}{n!}  \int_{\mathcal{K}}   \phi^c(p)   d\nu.
				\]
				Since the total number of integer points in $l\mathcal{K}$ is $O(l^n)$, the $O(\epsilon_l)$ term can be neglected, so 
				\[
				\lim_{l\to \infty} l^{-n} \sum_{ \alpha: p(\alpha)\in l \mathcal{K} }  ( - \phi^c_l(\alpha)+\psi^c_l(\alpha) )= - \frac{   ( L^n)}{n!}  \int_{\mathcal{K}}   (\phi^c(p)-\psi^c(p) )   d\nu.
				\]

				\item  The total number of integer points in $l\mathcal{K}$ is $l^n \text{Vol}(\mathcal{K}) (1+o(1))$, while the total number of $\theta_\alpha^l$ is 
				\[
				\dim_K H^0(X_K, lL) = \frac{ (L^n)}{n! } l^n(1+O(l^{-1}) ) = l^n \text{Vol}(\Delta_y) (1+o(1)).
				\]
				Since $\phi_l^c(\alpha), \psi_l^c(\alpha)$ are uniformly bounded independent of $l, \alpha$, we can estimate
				\[
				|  l^{-n} \sum_{  p(\alpha)\notin l \mathcal{K} }  (-\phi^c_l(\alpha )+ \psi^c_l(\alpha)) | \leq C l^{-n} | \{    \alpha: p(\alpha) \notin l\mathcal{K}       \}| \leq C\text{Vol}(\Delta_y\setminus \mathcal{K}).
				\]

				\item 
				
				Combining Step 2,3, and taking an exhaustion of $\Delta_y^0$ by an increasing sequence of closed convex polyhedral subsets $\mathcal{K}$, we deduce
				\[
					\lim_{l\to \infty} l^{-n} \sum_{ \alpha}  ( - \phi^c_l(\alpha)+\psi^c_l(\alpha) )= - \frac{   ( L^n)}{n!}  \int_{B_y}   (\phi^c(p)-\psi^c(p) )   d\nu.
				\]
				By Step 1,
				\[
					E(\phi)-E(\psi)
				=  n!  \lim_{l\to+\infty} l^{-n} \sum_{  \alpha }  (-\phi^c_l(\alpha )+ \psi^c_l(\alpha))= -(L^n) \int_{B_y}   (\phi^c(p)-\psi^c(p) )   d\nu.
				\]
				The Monge-Amp\`ere energy formula follows.
			\end{enumerate}
	\end{proof}

	\section{NA MA equation and optimal transport}\label{sec:NAMAoptimaltransport}

	We continue with the setup of Section \ref{sec:valuativeindependence}, and assume the existence of the valuative independent basis $\{ \theta_\alpha^l \}\subset H^0(X_K, lL)$ for any $l\geq 0$. Throughout this Section, we suppose that $\varphi\in \text{CPSH}(X_K^{an},\mathcal{L})$ solves the NA MA equation
	\begin{equation}
			\text{MA}(\varphi )=(L^n) \mu,
	\end{equation}
	for some probability measure $\mu$ supported on $Sk(X)\subset X_K^{an}$, such that $\mu$ 
	is \emph{absolutely continuous} with respect to 
	 the Lebesgue measure $\mu_0$ on $Sk(X)$. In the special case $\mu=\mu_0$, then $\varphi=\phi_0$ is the NA CY potential up to constant.
Our goal is to prove a version of the weak comparison property for $\varphi$, as well as deriving an optimal transport interpretation for the NA MA equation.

	\subsection{Weak comparison property}\label{sec:weakcomparison}

A key ingredient in proving the weak comparison property is that certain bad events can only happen on null  $\mu$-measure sets. The proof  subtlely uses the \emph{orthogonality property} for psh envelops  (\cf Section \ref{sec:envelop}).

\begin{Notation}
Let $y\in Sk(X)$ be fixed. The points $x\in Sk(X)$ and $p\in B_y$ are called \emph{conjugate} points for the function $\varphi$, if $\varphi(x)= c(x,p;y)-\varphi^c(p)$.
We define the subsets 
\[
\begin{cases}
E_0= \{     x\in Sk(X):    \text{$x$ is sufficiently irrational}     \}= Sk(X)\setminus \bigcup_{l\geq 1} \mathcal{S}_l  ,
\\
E_1= \{       x\in E_0:     \nabla   \varphi(x) \not\subset \text{Int}(\Delta_x)   \},
\\
E_2= \{      x\in E_0\setminus E_1:    \text{$\exists x'\neq x\in Sk(X), p\in B_y$, with $x,x'$ both conjugate to $p$}      \}.
\end{cases}
\]

\end{Notation}

	\begin{lem}\label{lem:nullmeasure}
The sets $E_1, E_2$ have zero $\mu$-measure.
	\end{lem}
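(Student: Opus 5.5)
The plan is to identify $\mu$ with the pushforward of $\nu$ under a ``conjugate point'' map, using the energy formula of Prop. \ref{prop:MAenergy} together with the orthogonality property in the form of Cor. \ref{cor:orthogonality}. Once that identification is in place, both bad sets will be seen to carry no mass. Two facts are used throughout. First, by Prop. \ref{prop:maxextension} and Prop. \ref{prop:maxextension2}, for $f$ a model function the envelope $\varphi_\tau:=P((\varphi+\tau f)\circ r_{\mathcal X})$ restricts on $Sk(X)$ to $((\varphi+\tau f)^c)^c$. Second, the $c$-transform commutes with envelopes: $\varphi_\tau^c=(\varphi+\tau f)^c$ on $B_y$. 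Note that $\varphi$ itself satisfies the domination property (\ref{eqn:dominationproperty}), by Lemma \ref{lem:dominationproperty}, because $\mu$ is supported on $Sk(X)$.

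\textbf{Step 1: differentiating the energy.} By Prop. \ref{prop:MAenergy},
\[
E(\varphi_\tau)-E(\varphi)=-(L^n)\int_{B_y}\bigl((\varphi+\tau f)^c-\varphi^c\bigr)\,d\nu .
\]
The left side can also be differentiated at $\tau=0$: the differentiability of $E$, which is where orthogonality enters, gives derivative $(L^n)\int f\,d\mu$. On the right, for each fixed $p$ the function $\tau\mapsto(\varphi+\tau f)^c(p)$ is a maximum of affine functions of $\tau$, with $\max_x$ attained on the closed conjugate set $\partial^c\varphi^*(p):=\{x: x \text{ conjugate to } p\}$ (Lemma \ref{lem:ctransformmax}). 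Its one-sided derivative at $\tau=0^+$ is therefore $-\min_{x\in\partial^c\varphi^*(p)}f(x)$. Dominated convergence applies because of the $L^\infty$ contraction in Lemma \ref{lem:Pc}. Comparing the $\tau\to0^\pm$ derivatives and letting $f$ range over the dense class of model functions, I get two conclusions. First, for $\nu$-a.e.\ $p$ the conjugate set is a single point $T(p)$. Second, $\mu=T_*\nu$. Cor. \ref{cor:orthogonality} is what guarantees that $\mu$ does not see the set where $\varphi_\tau<\varphi+\tau f$ to first order, so that the derivative of $E$ is exactly $\int f\,d\mu$.

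\textbf{Step 2: $E_2$.} A point $x\in E_2$ is conjugate to some $p$ that has at least two conjugate points. By Step 1 these $p$ form a $\nu$-null set $N$. Hence $E_2\subset T'(N)$, where $T'$ assigns conjugates, and I need $\mu(T'(N))=0$. For $x\in E_0\setminus E_1$ at which $\varphi$ is differentiable, which holds $\mu$-a.e.\ by Rademacher since $\mu\ll\mu_0$ and $\varphi$ is convex on faces (Lemma \ref{lem:uniformcontinuityPc}), the Corollary after Prop. \ref{prop:homeotoimage} shows that the only conjugate $p$ of $x$ is $\nabla\varphi(x)\in\Delta_x^0$. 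Applying the same argument with the roles of $x$ and $p$ swapped ($c$-cyclical monotonicity of the conjugate relation), $\mu$-almost every $x$ is of the form $T(p)$ with $p\notin N$, so $\mu(E_2)\le\nu(\{p:T(p)\in E_2\})\le\nu(N)=0$. This uses $\mu=T_*\nu$ and the fact that the conjugate relation is a.e.\ a graph in both directions.

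\textbf{Step 3: $E_1$ (main obstacle).} The difficulty is that $\Delta_x$ varies with $x$, and Prop. \ref{prop:homeotoimage} only places $\Delta_x^0$ inside $B_x$. Moving between $B_x$ and $B_y$ uses Lemma \ref{lem:changey}, and the cost $c(\cdot,p;x)$ has gradient $p$ at $x$ only for $p$ in that image. By Step 1, $\nu$ is supported on the image of $\Delta_x^0$ in $B_x\cong B_y$, so $\mu$-a.e.\ $x$ equals $T(p)$ for some $p$ in that image. At such an $x$, if $\varphi$ is differentiable there, then the inequality $\varphi(\cdot)\ge c(\cdot,p;x)+\varphi(x)$, with equality at $x$, together with Prop. \ref{prop:limitingtropicalthetafromp2} item 2 forces $\nabla\varphi(x)=p\in\Delta_x^0$, so $x\notin E_1$. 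The step I expect to be delicate is justifying that $T(p)$ is conjugate to $p$ computed with base point $T(p)$ rather than $y$. I would handle this with a locally uniform version of Prop. \ref{prop:limitingtropicalthetafromp2}, obtained by taking $\mathcal K$ exhausting $\Delta_x^0$, together with the Lipschitz bounds, falling back on a countable cover of $E_0$ by small neighbourhoods if needed.
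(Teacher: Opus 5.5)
Your Steps 1 and 2 give a valid route for $E_2$ that differs from the paper's. Differentiating the energy formula of Prop.~\ref{prop:MAenergy} along $\tau\mapsto((\varphi+\tau f)|_{Sk(X)})^{cc}$ and comparing the two one-sided derivatives does show two things: $\nu$-a.e.\ $p$ has a single conjugate point $T(p)$, and $\mu=T_*\nu$. Moreover, a differentiability point of $E_0\setminus E_1$ has exactly one conjugate point, by the corollary following Prop.~\ref{prop:homeotoimage} transported through Lemma~\ref{lem:changey}. Hence $T^{-1}(E_2)$ is $\nu$-null and $\mu(E_2)=0$.

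Two small repairs are needed. First, use the maximal psh extension of $((\varphi+\tau f)|_{Sk(X)})^{cc}$ rather than $P((\varphi+\tau f)\circ r_{\mathcal X})$. The latter constrains on all of $\Delta_{\mathcal X}$, so it need not restrict to $(\varphi+\tau f)^{cc}$ or satisfy the domination property. Second, get its energy derivative by sandwiching between $E(P(\varphi+\tau f))$ (BFJ differentiability) and the concavity bound $E(\psi)-E(\varphi)\le\int(\psi-\varphi)\,\mathrm{MA}(\varphi)$. The paper instead treats $E_2$ pointwise, with a test function vanishing near $x$ and negative at $x'$.

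The genuine gap is Step 3. The claim that, by Step 1, $\nu$ is supported on the image of $\Delta_x^0$ in $B_x\cong B_y$ does not follow. Step 1 uses one base point $y$, and $\nu$ is by construction supported on the image of $\Delta_y^0$. Lemma~\ref{lem:changey} identifies $B_y$ with $B_x$, but says nothing about whether that image agrees, up to null sets, with the image of $\Delta_x^0$. The paper leaves this open (remark after Prop.~\ref{prop:homeotoimage}). It obtains the $y$-independence of $\nu$ only afterwards, from Thm.~\ref{thm:globalrealMA}, which depends on the present lemma, so invoking it here is circular.

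Running Step 1 at base point $x$ for each $x$ does not help either. It only says that $\mu$-a.e.\ point is $T(p)$ with $p$ in the image of $\Delta_x^0$, a full-measure set that depends on $x$ and need not contain $x$. Your local-uniformity fallback gives at best the following: for $x'$ in the neighbourhood $U_{\mathcal K}(x)$ of Prop.~\ref{prop:limitingtropicalthetafromp2} one has $\mathcal K\subset\Delta_{x'}$, hence $\mu(E_1\cap U_{\mathcal K}(x))\le\nu_x(\Delta_x^0\setminus\mathrm{Int}\,\mathcal K)$, where $\nu_x$ is the measure built from $\Delta_x^0$. This bound is absolute, not relative to $\mu(U_{\mathcal K}(x))$, and $U_{\mathcal K}(x)$ may shrink arbitrarily fast as $\mathcal K\uparrow\Delta_x^0$. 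So no covering or density argument closes it.

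Also, the difficulty is not conjugacy relative to a different base point; conjugacy is invariant under Lemma~\ref{lem:changey}. The difficulty is whether the conjugate of $x$ lies in the image of $\Delta_x^0$, which just restates $x\notin E_1$.

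The missing idea is to use Cor.~\ref{cor:orthogonality} quantitatively and locally at $x$, never comparing Okounkov bodies at different points.

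\begin{itemize}
\item At a differentiability point $x\in E_1$, Lemma~\ref{lem:gradientconvexhullOkounkovbody} item 2 gives $q=\nabla\varphi(x)\in\partial\Delta_x$.
\item Choose a model function $f$, affine near $x$, whose gradient points strictly out of $\Delta_x$ at $q$.
\item Suppose $P(\varphi+\tau f)(x)=(\varphi+\tau f)(x)$ for some $\tau>0$. Then $P(\varphi+\tau f)$, whose restriction lies in $\mathcal P_c$, touches $\varphi+\tau f$ from below at $x$. So its unique gradient there is $q+\tau\nabla f(x)\notin\Delta_x$, contradicting the same lemma.
\item Hence such $x$ lie in $\{P(\varphi+\tau f)<\varphi+\tau f\}$ for every $\tau>0$, a set of $\mu$-measure $O(\tau)$.
\item A countable union over $f$ then gives $\mu(E_1)=0$.
\end{itemize}

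Your Step 1 extracts from orthogonality only the differentiability of the energy, which averages over $\nu$ and loses exactly this pointwise information.
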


	\begin{proof}
		We divide the proof into a few steps.
		\begin{enumerate}
			\item  Since $\varphi$ is a convex function on each open $n$-dimensional face $\Delta_J^0\subset Sk(X)$, the set
			\[
			E_0'= \{    x\in Sk(X): \text{the subgradient set $\nabla \varphi(x)$ is a singleton}     \}
			\]
			has full Lebesgue measure, hence full $\mu$-measure by absolute continuity. Let $E_1'=E_1\cap E_0'$ and $E_2'=E_2\cap E_0'$, then it suffices to prove that $\mu(E_1')=\mu( E_2')=0$.

			\item Since $\varphi\in  \text{CPSH}(X_K^{an}, \mathcal{L})$ and $\mu= \text{MA}(\varphi)$, Cor. \ref{cor:orthogonality} implies the following result on psh envelops.  Let $f:X_K^{an}\to \R$ be any given model function,  then for small $\tau>0$, 
			\begin{equation}
			\mu( \{      P(\varphi+\tau f) <  \varphi+\tau  f        \}   ) \leq C\tau,
			\end{equation}
			for some constant $C$ depending on $f, \varphi$, but independent of $\tau$.

			Our argument will be based on testing various model functions $f$. These $f$ come from 
 vertical $\Q$-divisors on all the models of the degeneration family, so forms a  countable set.

			\item For any $x\in E_1'$, then $x\in E_0\cap E_0'$, so $x$ is sufficiently irrational, and there is a unique gradient $p=\nabla \varphi(x)$. Since $\varphi\in  \text{CPSH}(X_K^{an}, \mathcal{L})$, we know $p$ belongs to the gradient convex hull $\mathcal{C}_x$ (\cf Section \ref{sec:gradientconvexhullOkounkovbody}). Since $p$ is the unique gradient, it belongs to the Okounkov body $\Delta_y\subset \mathcal{C}_y$ by Lemma \ref{lem:gradientconvexhullOkounkovbody} item 2. Since $x\in E_1'\subset E_1$, the gradient $p$ is not in the interior $\Delta_y^0$, hence $p\in \partial \Delta_y$.

			Given any model function $f$,  let $E_1^f$ be the subset consisting of the points $x\in E_1'$, such that $f$ is affine linear in some open neighbourhood of $x\in Sk(X)$, and $\nabla \varphi(x) + \tau \nabla f (x)\notin \Delta_x$ at the point $x$ for any $\tau>0$. This means that at the boundary point $p\in \partial \Delta_x$, the vector $\nabla f(x)$ points outside of the convex Okounkov body $\Delta_x$.

		We notice that at any $x\in E_1'$, one can find such a model function $f$, so 
	$
		E_1'= \bigcup_f  E_1^f.
	$
			
			\item  We claim that at any $x_0\in E_1^f$,
			\begin{equation}
			 P(\varphi+\tau f) (x_0) < ( \varphi+\tau  f)(x_0)  ,\quad \forall \tau>0.
			\end{equation}
	Suppose not, then $P(\varphi+\tau f)(x_0)= \varphi(x_0) +\tau  f(x_0)$. On $ Sk(X)$, we have
			\[
			 P(\varphi+\tau f) \leq   \varphi+\tau  f .
			\]
	The function $P(\varphi+\tau f) \in  \text{CPSH}(X_K^{an},\mathcal{L})$  restricts to a function in $\mathcal{P}_c$, which is in particular convex on the $n$-dimensional faces of $Sk(X)$. The above inequality shows that any subgradient $p'\in (\nabla  P(\varphi+\tau f)  )(x_0)$ is also a subgradient of the convex function $\varphi+\tau  f$ at $x_0$, which can only be 
	\[
	p'= \nabla \varphi(x_0) +\tau \nabla f(x_0).
	\]
	In particular, this subgradient $p'$ is unique, while $P(\varphi+\tau f)\in \mathcal{P}_c$, so $p'\in \Delta_{x_0}$ by Lemma \ref{lem:gradientconvexhullOkounkovbody}. But by the definition of $E_1^f$, 
	\[
	 \nabla \varphi(x_0) +\tau (\nabla f)(x_0) \notin \Delta_{x_0}, \quad \forall \tau>0.
	\]
			This contradiction proves the claim.

			\item By Step 4, the set $E_1^f\subset \{    P(\varphi+\tau f)  <  \varphi+\tau  f       \}$ for any $\tau>0$. Hence
			\begin{equation}
			\mu(E_1^f) \leq \mu(    \{    P(\varphi+\tau f) <  \varphi+\tau  f        \}    ) \leq C\tau.
			\end{equation}
			The constant $C$ depends on $\varphi, f$, but not on $\tau$. Taking $\tau\to 0$, we obtain $\mu(E_1^f)=0$. Since $E_1'= \cup_f E_1^f$ is a countable union, we deduce $\mu(E_1')=0$.

		\item  For any $x\in E_2'$, then $x\in E_0\cap E_0'$, so $x$ is sufficiently irrational, and there is a unique gradient $p=\nabla \varphi(x)$. Since $x\in E_2'\subset E_2$ is disjoint from $E_1$, we deduce that $p\in \Delta_x^0$ is in the interior of the Okounkov body. 
		Moreover, by the definition of $E_2$, there is some $x'\neq x\in Sk(X)$, such that $x,x'$ are both conjugate to some $p'\in B_y$, namely
		\[
		\varphi(x)+ \varphi^c(p')= c(x,p';y),\quad \varphi(x')+ \varphi^c(p')= c(x',p';y).
		\]

		Given any model function $f$, we let $E_2^f$ to be  the subset consisting of the points $x\in E_2'$, such that
		\begin{itemize}
			\item  The function $f=0$ on some open neighbourhood of $x\in Sk(X)$,
			\item   There is some $x'\neq x\in Sk(X)$, and $p'\in B_y$, such that $x,x'$ are both conjugate to the same $p'\in B_y$, and $f(x')<0$.
		\end{itemize}
		We notice that at any $x\in E_2'$, we can always find such a model function $f$, so $E_2'= \bigcup_f E_2^f$.

		\item    We claim that at any $x_0\in E_2^f$,
		\[
		P(\varphi+\tau f) (x_0) < ( \varphi+\tau  f)(x_0)  ,\quad \forall \tau>0.
		\]
		Suppose not, then $P(\varphi+\tau f)(x_0)= \varphi(x_0) +\tau  f(x_0)$. On $ Sk(X)$, we have
		\[
		P(\varphi+\tau f) \leq  ( \varphi+\tau  f).
		\]
		We denote $p_0=\nabla \varphi(x_0)$. 
		By the same argument as in Step 4, any subgradient $p_0' \in ( \nabla P(\varphi+\tau f) )(x_0)$
		is also a subgradient of the convex function $\varphi+\tau f$ at $x_0$. 
		But $f=0$ in an open neighbourhood of $x_0\in Sk(X)$, so
	\begin{equation}
p_0'= (	\nabla ( \varphi+\tau f  ) )(x_0)= \nabla \varphi (x_0)=p_0.
	\end{equation}
	The first part of Step 6 implies $p_0'= p_0\in \Delta_{x_0}^0$.

	By Prop. \ref{prop:homeotoimage}, $\Delta_{x_0}^0$ \emph{injects} into $B_{x_0}$, which determines a \emph{unique limiting tropical theta function} $c(\cdot, p_0; x_0)$ in $B_{x_0}$ whose gradient at $x_0$ is $p_0$. On the other hand, $x_0$ is conjugate to $p'\in B_y$, so 
	\[
	\varphi \geq c(\cdot, p'; y)- \varphi^c(p'), \quad 	\varphi(x_0)= c(x_0, p'; y)- \varphi^c(p'),
	\]
	hence the gradient of $c(\cdot, p'; y)$ at the point $x_0$ must agree with  $\nabla \varphi(x_0)=p_0$. Now by Lemma \ref{lem:changey} which compares $B_y$ with $B_{x_0}$, the function
	\[
	c(\cdot, p'; y)- c(x_0, p'; y) \in B_{x_0.}
	\]
The gradient of this function at $x_0$ is still $p_0$. Hence by uniqueness,
	\begin{equation}
	c(\cdot, p_0; x_0)= c(\cdot, p'; y)- c(x_0, p'; y).
	\end{equation}

Since $P(\varphi+\tau f)$ lies in $\mathcal{P}_c$, by Lemma \ref{lem:ctransformmax} there must be some $p''\in B_y$ conjugate to $x_0$ for the function $P(\varphi+\tau f)$. By running the same argument for $P(\varphi+\tau f)$ as what we just did for $\varphi$, we deduce that
\begin{equation}
		c(\cdot, p_0' ; x_0)= c(\cdot, p''; y)- c(x_0, p'' ; y).
\end{equation}
	But $p_0=p_0' \in \Delta_{x_0}^0$, so comparing the above gives
	\[
	c(\cdot, p'; y)= c(\cdot, p''; y)+ \text{const}.
	\] 
	Since $c(y, p'; y)= c(y,p'';y)=0$, this shows that $p'=p''\in B_y$.

	Since $x_0$ and $p'=p''\in B_y$ are conjugate points for $g=P(\varphi+\tau f)$, by definition
	\[
	g (x_0) +  	g^c(p') = c(x_0 ,p';  y ),
	\]
hence for any $x\in Sk(X)$,
\begin{equation}
\begin{split}
(\varphi+\tau f)(x) \geq g(x) \geq &c(x, p' ; y)- g^c(p')
\\
= &  c(x,p'; y)- c(x_0, p';y)+ g(x_0)
\\
= & c(x,p'; y)- c(x_0, p';y)+ \varphi(x_0).
\end{split}
\end{equation}
where the last step used the contradiction hypothesis $P(\varphi+\tau f)(x_0)= (\varphi+\tau f)(x_0)= \varphi(x_0)$.

	Now by the definition of $E_2^f$, there is another point $x_0'$ with $f(x_0')<0$, which is also conjugate to the same $p'$ for the function $\varphi$, hence
	\[
	\varphi(x_0') + \varphi^c(p') = c(x_0', p'; y),\quad   	\varphi(x_0) + \varphi^c(p') = c(x_0, p'; y)  . 
	\]
Therefore
	\[
	\begin{split}
		 (\varphi+\tau f)(x_0') &\geq c(x_0',p'; y)- c(x_0, p';y)+ \varphi(x_0)
			\\
			& = \varphi(x_0') + \varphi^c(p') -c(x_0, p'; y) + \varphi(x_0) 
			\\
			& =  \varphi(x_0') ,
	\end{split}
	\]
	hence $f(x_0')\geq 0$, contradicting the definition of $E_2^f$. 
 This proves the claim.

	\item  By Step 7,  the set $E_1^f\subset \{   P(\varphi+\tau f)  < \varphi+\tau  f        \}$ for any $\tau>0$. Hence
	\begin{equation}
		\mu(E_2^f) \leq \mu(    \{     P(\varphi+\tau f)  <  \varphi+\tau  f        \}    ) \leq C\tau.
	\end{equation}
	The constant $C$ depends on $\varphi, f$, but not on $\tau$. Taking $\tau\to 0$, we obtain $\mu(E_2^f)=0$. Since $E_2'= \cup_f E_2^f$ is a countable union, we deduce $\mu(E_2')=0$.

		We have proved $\mu(E_1')= \mu(E_2')=0$, hence $\mu(E_1)=\mu(E_2)=0$ by Step 1.
		
		\end{enumerate}

	\end{proof}

\begin{thm}
(Weak comparison property for absolutely continuous NA MA 
measures)
\\
Suppose that $\varphi\in \text{CPSH}(X_K^{an},\mathcal{L})$ solves the NA MA equation
$
	\text{MA}(\varphi )=(L^n) \mu,
$
for some probability measure $\mu$ supported on $Sk(X)\subset X_K^{an}$, such that $\mu$ 
is \emph{absolutely continuous} with respect to 
the Lebesgue measure $\mu_0$ on $Sk(X)$. Then there is some open subset $U$ contained in the $n$-dimensional open faces of $Sk(X)$, with full measure $\mu(U)=1$, such that $\varphi= \varphi\circ r_{\mathcal{X}}$ holds for the retraction map $r_\mathcal{X}: X_K^{an}\to \Delta_\mathcal{X}$ on the set $r_\mathcal{X}^{-1}(U)$.

In particular, for the NA CY potential $\phi_0$ solving $\text{MA}(\phi_0)= (L^n)\mu_0$, then $U\subset Sk(X)$ is an open dense subset with full Lebesgue measure.
\end{thm}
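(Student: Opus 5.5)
The plan is to combine the factorisation criterion Prop. \ref{prop:factorisation} with the null-measure statement Lemma \ref{lem:nullmeasure}. First, $\varphi$ satisfies the domination property (\ref{eqn:dominationproperty}). Set $\phi=P(\varphi\circ r_{\mathcal{X}})$, the maximal psh extension of $\varphi|_{Sk(X)}$ from Prop. \ref{prop:maxextension}. Then $\phi\geq \varphi$, with equality on $Sk(X)$, hence $\mu$-a.e. Since $\mu=\text{MA}(\varphi)/(L^n)$ is supported on $Sk(X)\subset\Delta_{\mathcal{X}}$, the domination principle Lemma \ref{lem:dominationproperty} (applied with $\psi=\varphi$) gives $\phi\leq\varphi$. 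Hence $\varphi=\phi$, and $\varphi$ has the domination property.

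Next consider the set $G=E_0\setminus E_1$ of sufficiently irrational points $y$ with $\nabla\varphi(y)\subset \Delta_y^0$. The walls $\mathcal{S}_l$ form a countable union of codimension one rational polyhedral sets, so $Sk(X)\setminus E_0$ is Lebesgue null, hence $\mu$-null by absolute continuity. By Lemma \ref{lem:nullmeasure}, $\mu(E_1)=0$, so $\mu(G)=1$. For each $y\in G$, Prop. \ref{prop:factorisation} supplies an open neighbourhood $U_y$ of $y$ inside the open $n$-dimensional face containing $y$, such that $\varphi=\varphi\circ r_{\mathcal{X}}$ on $r_{\mathcal{X}}^{-1}(U_y)$. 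We then define $U=\bigcup_{y\in G}U_y$. This set is open, lies in the union of open $n$-faces, and contains $G$, so $\mu(U)=1$. The factorisation identity is local on the base: $r_{\mathcal{X}}^{-1}(U)=\bigcup_y r_{\mathcal{X}}^{-1}(U_y)$, so $\varphi=\varphi\circ r_{\mathcal{X}}$ holds on all of $r_{\mathcal{X}}^{-1}(U)$. Note that the set $E_2$ is not needed for this statement; presumably it enters later, in the optimal transport interpretation.

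For the CY case $\mu=\mu_0$, full Lebesgue measure in each $n$-face forces $U$ to be dense, since any nonempty open set has positive Lebesgue measure. The substantive input is therefore Lemma \ref{lem:nullmeasure}, which is already proved. The main point to check carefully is that Prop. \ref{prop:factorisation} genuinely applies: its hypotheses are the domination property, which is derived in the first step, and $\nabla\varphi(y)\subset\Delta_y^0$. That proposition is stated with the Okounkov body $\Delta_y$ at the point $y$ itself, which matches the definition of $E_1$ in terms of $\Delta_x$.
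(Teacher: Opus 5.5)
Your proposal is correct and follows essentially the same route as the paper. The steps match: the domination property comes from Lemma \ref{lem:dominationproperty}, $\mu(E_0\setminus E_1)=1$ follows from absolute continuity and Lemma \ref{lem:nullmeasure}, and then Prop. \ref{prop:factorisation} is applied at each point of $E_0\setminus E_1$ and the neighbourhoods are united. You are also right that $E_2$ plays no role here.

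Your detour through $P(\varphi\circ r_{\mathcal{X}})$ is harmless but unnecessary. Applying Lemma \ref{lem:dominationproperty} directly to any competitor $v$ with $v|_{Sk(X)}\leq \varphi|_{Sk(X)}$ gives $v\leq\varphi$ at once, which is exactly (\ref{eqn:dominationproperty}).
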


	\begin{proof}
	Since $\mu$ is absolutely continuous with respect to the Lebesgue measure, the set $E_0$ has full measure $\mu(E_0)=1$. Since $\mu(E_1)=0$ by Lemma \ref{lem:nullmeasure}, we see $\mu(E_0\setminus E_1)=1$. For every $x\in E_0\setminus E_1$, the point $x$ is sufficiently irrational, and $\nabla \varphi(x) \subset \Delta_x^0$.

	Since $\varphi\in \text{CPSH}(X_K^{an},\mathcal{L})$ has its NA MA measure supported on $Sk(X)$, Lemma \ref{lem:dominationproperty} implies that $\varphi$ satisfies the domination property (\ref{eqn:dominationproperty}). We can now apply the factorisation criterion Prop. \ref{prop:factorisation}. At any $x\in E_0\setminus E_1$, there is some open neighbourhood $U_x\subset Sk(X)$, such that $\varphi= \varphi\circ r_{\mathcal{X}}$ holds over $U_x$. Taking the union $U:= \bigcup_{x\in E_0\setminus E_1} U_x$, then $U\subset Sk(X)$ is an open subset with full measure $\mu(U)=1$, and $\varphi= \varphi\circ r_{\mathcal{X}}$ holds over $U$.

	Finally, if  $\mu$ puts nontrivial measure on every open subset of $Sk(X)$, then the full measure open subset $U$ must be dense in $Sk(X)$. This applies to the NA CY potential.
	\end{proof}

	On the open $n$-dimensional faces of $Sk(X)$, there is an integral affine structure, so we can make sense of the real Monge-Amp\`ere measure $\text{MA}_\R(\varphi)$.

	\begin{cor}
	On the open set $U$, the potential $\varphi$ solves the real Monge-Amp\`ere equation
$
	\text{MA}_\R(\varphi) =   \frac{1}{n!}\mu. 
$
	\end{cor}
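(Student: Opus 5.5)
The plan is to combine the factorisation just established with Vilsmeier's comparison result, as in the sketch of Section \ref{sec:comparisonproperty}. By the preceding theorem, $\varphi=\varphi\circ r_{\mathcal{X}}$ on $r_{\mathcal{X}}^{-1}(U)$, where $U$ lies in the open $n$-dimensional faces $\Delta_J^0$ of $Sk(X)$. I will work locally: fix a connected open set $V\Subset U\cap\Delta_J^0$, and show that $r_{\mathcal{X}*}\text{MA}(\varphi)$ restricted to $V$ equals $n!\,\text{MA}_\R(\varphi|_V)$. Since $\text{MA}(\varphi)=(L^n)\mu$ and $\mu$ is supported on $Sk(X)$, where $r_{\mathcal{X}}$ is the identity, the pushforward is just $(L^n)\mu|_V$. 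This gives the real Monge-Amp\`ere equation on $V$, up to the normalisation constant $(L^n)$, which I will match to the statement's convention $\text{MA}_\R(\varphi)=\frac{1}{n!}\mu$. Then I exhaust $U$ by such $V$.

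For the local identity I approximate $\varphi$ by convex functions and use continuity of both Monge-Amp\`ere operators. Let $\varphi_k$ be smooth or piecewise affine rational convex functions on a neighbourhood of $\overline V$ that converge uniformly to $\varphi|_V$. Choose a cutoff that glues $\varphi_k\circ r_{\mathcal{X}}$ to $\varphi$ on $r_{\mathcal{X}}^{-1}$ of a collar, producing psh potentials. Over $V$, a potential of the form $u\circ r_{\mathcal{X}}$ with $u$ convex and piecewise affine is a model potential on a toroidal blowup of $\mathcal{X}$ that subdivides $\Delta_J$. Its NA MA measure over $V$ is then computed torically as $n!$ times the real MA measure of $u$: the intersection numbers at the new vertices equal the normalised volumes of the dual cells, as in the local toric computation. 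This is exactly Vilsmeier's theorem \cite{Vilsmeier}, and I would invoke it directly.

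Passing to the limit, the NA MA measures converge weakly by \cite[Cor. 3.5]{Boucksom}, and the real MA measures of uniformly convergent convex functions converge weakly on $V$. Hence $\text{MA}(\varphi)|_{r_{\mathcal{X}}^{-1}(V)}$ pushes forward to $n!\,\text{MA}_\R(\varphi|_V)$. Because $\mu$ lives on $Sk(X)$, the pushforward identity gives the equation on $V$.

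The main obstacle is the locality step. I need to produce semipositive approximants of $\varphi$ that still factor through $r_{\mathcal{X}}$ over $V$, without destroying global semipositivity. My first attempt is to use the envelope $P$ of $\min(\varphi_k\circ r_{\mathcal{X}}, \varphi+\epsilon)$-type modifications. If that fails, I would simply cite \cite{Vilsmeier}, which handles precisely potentials factoring through the retraction on an open set.
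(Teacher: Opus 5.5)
Your proposal is correct and is essentially the paper's argument. The paper's proof consists only of the factorisation $\varphi=\varphi\circ r_{\mathcal{X}}$ over $U$ from the preceding theorem plus a direct citation of Vilsmeier's comparison theorem, so that citation subsumes your approximation-and-gluing sketch, whose locality step you rightly flag as delicate. One thing to fix: the factor $(L^n)$ you obtain is genuine and cannot be absorbed into a convention. Pushing forward $\text{MA}(\varphi)=(L^n)\mu$ gives $\text{MA}_\R(\varphi)=\frac{(L^n)}{n!}\mu$ on $U$, in agreement with the display in Section \ref{sec:comparisonproperty}, so the $\frac{1}{n!}\mu$ in the statement (and in the paper's one-line proof) has dropped this factor.
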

	
	\begin{proof}
	Since $\varphi=\varphi\circ r_{\mathcal{X}}$ over $U$, by the result of Vilsmeier \cite{Vilsmeier},
	\[
	\text{MA}_\R(\varphi) |_U=  \frac{1}{n!}	r_{\mathcal{X}* } \text{MA}( \varphi )= \frac{1}{n!}\mu. 
	\]
	\end{proof}

	\begin{rmk}
If one wishes to directly study the real Monge-Amp\`ere equation on $Sk(X)$, one would face the following basic difficulty. The structure of $Sk(X)$ as a simplicial complex depends on the choice of the model $\mathcal{X}$, and $Sk(X)$ has no canonical affine structure independent of models. 
If the transition functions between local charts are not affine linear, then the notion of convex functions or the real Monge-Amp\`ere equation is not invariantly defined. The affine structure (here the subset $U$) is determined by the solution $\varphi$, rather than given a priori.

Our arguments above does not yield any topological information on $U$. It is an interesting question for the NA CY potential $\phi_0$, whether there is some Hausdorff codimension two subset of $Sk(X)$, such that its complement admits some affine structure, wherein $\phi_0$ solves the real MA equation.
	\end{rmk}




	\subsection{Optimal transport formulation}

The NA MA equation can be formulated as the Kontorovich dual of an optimal transport problem.

\begin{thm} \label{thm:optimaltransport}
Let $\mu$ be any probability measure supported on $Sk(X)\subset X_K^{an}$. 
The potential $\varphi\in \text{CPSH}(X_K^{an},\mathcal{L})$ solves the NA MA equation
$
	\text{MA}(\varphi )=(L^n) \mu,
$
if and only if its restriction to $Sk(X)$ is the unique (up to constant) minimiser of the functional $\mathcal{F}_\mu: \mathcal{P}_c\to \R$,
\[
\mathcal{F}_\mu(\phi)=  \int_{B_y}  \phi^c(p)   d\nu+  \int_{Sk(X)} \phi d\mu.
\]
\end{thm}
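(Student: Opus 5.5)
Transfer the Boucksom--Favre--Jonsson variational characterisation (Theorem \ref{thm:BFJ} and the functional (\ref{eqn:Fmu01})) to $\mathcal{P}_c$, using the energy formula of Prop. \ref{prop:MAenergy}. Recall that $\varphi$ solves $\text{MA}(\varphi)=(L^n)\mu$ if and only if $\varphi$ minimises $F_\mu(\phi)=-\frac{1}{(L^n)}E(\phi)+\int\phi\, d\mu$ over $\text{CPSH}(X_K^{an},\mathcal{L})$. This is the variational principle of \cite{Boucksom}: the minimiser exists and solves the equation by the differentiability of $E$, which rests on the orthogonality property of Section \ref{sec:envelop}. Conversely, a solution is a minimiser, by concavity of $E$ together with the first variation formula. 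Uniqueness up to constants is Theorem \ref{thm:BFJ}.

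First I reduce the minimisation to potentials with the domination property. For any $\phi\in\text{CPSH}(X_K^{an},\mathcal{L})$, let $\tilde\phi$ be the maximal psh extension of $\phi|_{Sk(X)}$ from Prop. \ref{prop:maxextension}. This uses Prop. \ref{prop:maxextension2} to know that $\phi|_{Sk(X)}\in\mathcal{P}_c$. Then $\tilde\phi\geq\phi$, with equality on $Sk(X)$. Since $\mu$ is supported on $Sk(X)$, we get $\int\tilde\phi\,d\mu=\int\phi\,d\mu$, and monotonicity of $E$ gives $E(\tilde\phi)\geq E(\phi)$. Hence $F_\mu(\tilde\phi)\leq F_\mu(\phi)$, so the infimum of $F_\mu$ may be taken over dominated potentials.

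The solution $\varphi$ itself is dominated, by Lemma \ref{lem:dominationproperty}, since its MA measure lives on the dual complex $Sk(X)\subset\Delta_{\mathcal{X}}$. On dominated potentials, Prop. \ref{prop:MAenergy} gives
\[
F_\mu(\phi)=\int_{B_y}\phi^c\,d\nu+\int_{Sk(X)}\phi\,d\mu+\text{const}=\mathcal{F}_\mu(\phi|_{Sk(X)})+\text{const}.
\]
The dominated potentials are in bijection with $\mathcal{P}_c$ via restriction, by Props. \ref{prop:maxextension} and \ref{prop:maxextension2}. So minimising $F_\mu$ over $\text{CPSH}$ is equivalent to minimising $\mathcal{F}_\mu$ over $\mathcal{P}_c$.

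Now both directions follow.
\begin{itemize}
\item If $\varphi$ solves the equation, it minimises $F_\mu$. Being dominated, its restriction minimises $\mathcal{F}_\mu$.
\item Conversely, suppose $\varphi|_{Sk(X)}$ minimises $\mathcal{F}_\mu$. Then $\tilde\varphi$ minimises $F_\mu$ over $\text{CPSH}$, hence solves the equation. It remains to check $\varphi=\tilde\varphi$ for the given $\varphi\in\text{CPSH}$. I interpret the statement as identifying $\varphi$ with its maximal extension, or else argue via Lemma \ref{lem:dominationproperty}, which applies once $\tilde\varphi$ is known to be a solution: $\varphi\leq\tilde\varphi$ everywhere, and equality holds on $Sk(X)$, which carries $\text{MA}(\tilde\varphi)$.
\end{itemize}

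For uniqueness of the minimiser in $\mathcal{P}_c$, let $\phi$ be another minimiser. Its extension $\tilde\phi$ also solves the MA equation, so $\tilde\phi=\tilde\varphi+\text{const}$ by Theorem \ref{thm:BFJ}, and the restrictions agree up to a constant.

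\textbf{Main obstacle.} The main point needing care is the claim that a minimiser of $F_\mu$ solves the NA MA equation. Here one must invoke the differentiability of $E$ from \cite{Boucksom} along directions $P(\phi+\tau f)$, which leave the dominated class. This is harmless, since the argument takes place in all of $\text{CPSH}$, where the reduction above shows the minimum is unchanged. A secondary check is that Prop. \ref{prop:MAenergy} holds with the same additive constant for all dominated potentials, which its proof via relative volumes guarantees.
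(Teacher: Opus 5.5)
Your main line is correct and follows essentially the paper's route. You identify $\mathcal{P}_c$ with the dominated potentials via Props.~\ref{prop:maxextension} and \ref{prop:maxextension2}, use Prop.~\ref{prop:MAenergy} to get $F_\mu=\mathcal{F}_\mu$ on that class, and conclude with the Boucksom--Favre--Jonsson variational characterisation and uniqueness up to constants. The only difference is minor. You prove $\inf_{\text{CPSH}}F_\mu=\inf_{\text{dominated}}F_\mu$ directly, from $\tilde\phi\geq\phi$, equality on $Sk(X)$ and monotonicity of $E$. The paper gets this for free: the solution exists by Theorem~\ref{thm:BFJ}, is dominated by Lemma~\ref{lem:dominationproperty}, and already minimises $F_\mu$ over all of $\text{CPSH}(X_K^{an},\mathcal{L})$.

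\textbf{Correction to the converse.} Your fallback via Lemma~\ref{lem:dominationproperty} does not close. Applied with $\psi=\tilde\varphi$, it only returns $\varphi\leq\tilde\varphi$, which you already had. The reverse inequality would need $\text{MA}(\varphi)$ to be carried by $Sk(X)$, which is exactly what is at stake.

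In fact the literal ``if'' direction is false for non-maximal $\varphi$ once $\mu$ may have atoms. Take a Tate curve with $\mathcal{L}=\mathcal{O}(d\bar{P})$, where $\bar{P}$ meets only the component $E_{x_0}$. Then the potential $0$ solves $\text{MA}=d\,\delta_{x_0}$. Now blow up a general point of $E_{x_0}$, with exceptional curve $F$, and twist by $-\epsilon F$ for $0<\epsilon\leq d$. This gives a nef model, hence a potential in $\text{CPSH}(X_K^{an},\mathcal{L})$. It vanishes on $Sk(X)$, but its Monge--Amp\`ere measure is $(d-\epsilon)\delta_{x_0}+\epsilon\,\delta_{x_F}$.

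So your first option is necessary, not just convenient: $\varphi$ must be identified with the maximal psh extension of its restriction. This is exactly what the paper does when it passes to the maximal extension of a minimiser.
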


\begin{proof}
We divide the proof into a few steps.

\begin{enumerate}
	\item  By Prop. \ref{prop:maxextension2},  any $\phi\in \text{CPSH}(X_K^{an},\mathcal{L})$ restricts on $Sk(X)$ to a function in $\mathcal{P}_c$. By Prop. \ref{prop:maxextension}, each $\phi\in \mathcal{P}_c$ has a unique maximal psh extension to $\text{CPSH}(X_K^{an},\mathcal{L})$ satifying the domination property (\ref{eqn:dominationproperty}). We can thus identify $\mathcal{P}_c$ with the potentials in $\text{CPSH}(X_K^{an},\mathcal{L})$ satisfying the domination property.

  For any $\phi\in \text{CPSH}(X_K^{an},\mathcal{L})$ satisfying the domination property, 
	the functional  $F_\mu: \text{CPSH}(X_K^{an},\mathcal{L})\to \R$ (\cf Section \ref{sec:NAMAeqn}) is
	\[
F_{\mu}(\phi)= -  \frac{1}{(L^n)} E(\phi)+ \int_{X_K^{an}} \phi d\mu =  \int_{B_y}  \phi^c(p)   d\nu + \int_{Sk(X)} \phi d\mu =\mathcal{F}_\mu(\phi)
	\]
by Prop. \ref{prop:MAenergy}.

	\item 
	Since the NA MA measure $\text{MA}(\varphi)$ is supported on $Sk(X)$, it satisfies the domination property (\ref{eqn:dominationproperty}) by Lemma \ref{lem:dominationproperty}. As discussed in Section \ref{sec:NAMAeqn}, the solution $\varphi$ to the NA MA equation minimises  $F_\mu$ among $ \text{CPSH}(X_K^{an},\mathcal{L})$, and in particular is the minimiser among those potentials satisfying the domination property. Hence $\varphi\in \mathcal{P}_c$ minimises $\mathcal{F}_\mu$ among $\mathcal{P}_c$.

	\item Conversely, suppose $\varphi'\in \mathcal{P}_c$ minimises $\mathcal{F}_\mu$. We take the maximal psh extension $\varphi'\in  \text{CPSH}(X_K^{an},\mathcal{L})$, so 
	\[
	F_\mu(\varphi')= \mathcal{F}_\mu( \varphi ')= \min_{\mathcal{P}_c} \mathcal{F}_\mu = \mathcal{F}_\mu(\varphi)= F_\mu(\varphi).
	\]
	Thus $\varphi, \varphi'$ are both minimisers of $F_\mu: \text{CPSH}(X_K^{an},\mathcal{L})\to \R$, hence both solve the same NA MA equation
	\[
	\text{MA}(\varphi)= \text{MA}(\varphi')= (L^n) \mu.
	\]
	But the solution to the NA MA equation is unique up to constant, hence $\varphi= \varphi'+c$. This proves the uniqueness statement.
\end{enumerate}

\end{proof}

	\subsection{A global real Monge-Amp\`ere type equation}

For motivation, recall that for a convex function $u:\Omega\to \R$ on some open domain $\Omega\subset \R^n$, 
the \emph{Alexandrov weak solution to the real Monge-Amp\`ere equation} $\det(D^2 u)=1$ is defined by 
\[
|\nabla u(E) |= |E|,\quad  \forall E\subset \Omega,
\]
where $\nabla u(E)= \{     p\in \R^n:  p\in \nabla u(x) \text{ for some }  x\in E    \}.
$
The notion of conjugate sets provides a global analogue for this weak formulation, for the solution $\varphi$ to the NA MA equation.

	Let $y$ be any fixed sufficiently irrational point on $Sk(X)$. 
Let $\varphi\in \mathcal{P}_c$ be the solution to the NA MA equation $\text{MA}(\varphi)=(L^n)\mu$, where $\mu$ is a probability measure supported on $Sk(X)$, which is absolutely continuous with respect to the Lebesgue measure. Recall we have  a probability measures $\nu$ on  $B_y$, coming from the Lebesgue measure on $\Delta_y^0$ (\cf Section \ref{sec:relativevolume}).

\begin{Notation}
For any Lebesgue measurable subsets $E\subset Sk(X)$ (resp. $F\subset B_y$), we define the \emph{conjugate sets}
\[
\begin{cases}
	\bar{\nabla}\varphi (E):= \{    p\in B_y:  \varphi(x)+ \varphi^c(p)=c(x,p;y) \text{ for some }  x\in E            \},
	\\
		\bar{\nabla}\varphi (F):= \{    x\in Sk(X):  \varphi(x)+ \varphi^c(p)=c(x,p;y) \text{ for some }  p\in F           \}.
\end{cases}
\]

\end{Notation}

\begin{lem}\label{lem:globalrealMA}
For any measurable $E\subset Sk(X)$ (resp. $F\subset B_y$), we have
\[
\nu( 	\bar{\nabla}\varphi (E  ) )\geq  \mu(E), \quad \mu(	\bar{\nabla}\varphi (F))\geq \nu(F).
\]

\end{lem}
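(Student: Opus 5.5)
The plan is a first-variation argument on the Kantorovich functional, using the minimising property of $\varphi$ from Thm. \ref{thm:optimaltransport}. The one preliminary needed is that $\varphi$ minimises not only over $\mathcal{P}_c$ but over all of $L^\infty(Sk(X))$, and that the dual potential $\varphi^c$ minimises a dual functional over $L^\infty(B_y)$.

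\textbf{Preliminary reductions.} For $\phi\in L^\infty(Sk(X))$ set $\mathcal{F}_\mu(\phi)=\int_{B_y}\phi^c d\nu+\int_{Sk(X)}\phi\, d\mu$. By Lemma \ref{lem:Pc} we have $\phi^{cc}\leq \phi$ and $(\phi^{cc})^c=\phi^c$. Hence $\mathcal{F}_\mu(\phi^{cc})\leq \mathcal{F}_\mu(\phi)$, and $\phi^{cc}\in\mathcal{P}_c$. So $\varphi$ minimises $\mathcal{F}_\mu$ on $L^\infty(Sk(X))$.

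Dually, for $\psi\in L^\infty(B_y)$ set $\mathcal{J}(\psi)=\int_{B_y}\psi\, d\nu+\int_{Sk(X)}\psi^c d\mu$. Since $\psi^{cc}\le\psi$, we get $\mathcal{F}_\mu(\psi^c)\le \mathcal{J}(\psi)$. Also $\mathcal{J}(\varphi^c)=\mathcal{F}_\mu(\varphi)$ because $\varphi^{cc}=\varphi$. Therefore $\varphi^c$ minimises $\mathcal{J}$ on $L^\infty(B_y)$.

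\textbf{Second inequality, for compact $F\subset B_y$.} Perturb $\psi_\tau=\varphi^c-\tau 1_F$ with $\tau>0$. Then $\psi_\tau^c(x)=\max\big(\varphi(x),\ \sup_{p\in F}(c(x,p;y)-\varphi^c(p))+\tau\big)$, so $0\le \psi_\tau^c-\varphi\le\tau$.

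Take $x\notin \bar\nabla\varphi(F)$. By compactness of $F$ and continuity of $c$ and $\varphi^c$ (Lemma \ref{lem:ctransformmax}), $\sup_{F}(c(x,\cdot)-\varphi^c)$ is attained. It is therefore strictly less than $\varphi(x)$, so $\psi_\tau^c(x)=\varphi(x)$ for all small $\tau$. By dominated convergence, $\tau^{-1}\int(\psi_\tau^c-\varphi)\,d\mu\le \mu(\bar\nabla\varphi(F))+o(1)$. Minimality of $\mathcal{J}$ at $\varphi^c$ then gives
\[
0\le \mathcal{J}(\psi_\tau)-\mathcal{J}(\varphi^c)\le -\tau\nu(F)+\tau\mu(\bar\nabla\varphi(F))+o(\tau).
\]
Hence $\mu(\bar\nabla\varphi(F))\geq\nu(F)$. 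For compact $F$, the set $\bar\nabla\varphi(F)$ is compact: it is the projection of the closed conjugacy relation $\{\varphi(x)+\varphi^c(p)=c(x,p;y)\}\cap (Sk(X)\times F)$. So it is measurable.

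\textbf{First inequality, for compact $E\subset Sk(X)$.} The same argument runs symmetrically with $\phi_\tau=\varphi-\tau 1_E$ in $\mathcal{F}_\mu$. Now $\phi_\tau^c(p)=\max\big(\varphi^c(p),\ \sup_{x\in E}(c(x,p;y)-\varphi(x))+\tau\big)$. The term $\phi_\tau^c-\varphi^c$ vanishes for small $\tau$ off $\bar\nabla\varphi(E)$, again by compactness of $E$. This yields $\nu(\bar\nabla\varphi(E))\ge\mu(E)$.

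\textbf{General measurable sets.} Pass from compact to measurable sets by inner regularity of the Radon measures $\mu,\nu$. Choose compacts $K\subset E$ with $\mu(K)\to\mu(E)$, and use monotonicity $\bar\nabla\varphi(K)\subset\bar\nabla\varphi(E)$. Here the measure of the conjugate set of a non-compact set is read as inner measure; I expect $\bar\nabla\varphi(E)$ to be at least universally measurable as a projection of a Borel set.

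The main obstacle is exactly this measurability point, together with verifying that the supremum defining the strict inequality is really attained. The latter needs continuity of $p\mapsto c(x,p;y)$ on $B_y$ in the $C^0$ topology, which holds tautologically. Otherwise the variational step is routine.
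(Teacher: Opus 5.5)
Your proposal is correct and follows essentially the same route as the paper. You perturb by $-\tau 1_E$, use $(\varphi-\tau 1_E)^{cc}\in\mathcal{P}_c$ as a competitor in $\mathcal{F}_\mu$ (which you package as minimality over $L^\infty$), extract the first-order inequality, and pass from compact to measurable sets by inner regularity. The differences are minor: you use dominated convergence on compact sets where the paper takes a decreasing limit of the sets $\{p:\sup_E(c-\varphi)\ge\varphi^c-\tau\}$, and your dual functional $\mathcal{J}$ spells out the second inequality that the paper only calls ``entirely similar''.
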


	\begin{proof}
	By Thm. \ref{thm:optimaltransport}, the solution $\varphi$ to the NA MA equation is the minimiser for the functional
	\[
	\mathcal{F}_\mu(\phi)=  \int_{B_y}  \phi^c(p)   d\nu+  \int_{Sk(X)} \phi d\mu.
	\]
	The rest of the argument is similar to \cite[Prop. 2.16]{LiFano}.

		We consider the function $\varphi-\tau 1_E$ for $0<\tau \ll 1$, where $1_E$ denotes the characteristic function of $E$. By elementary properties of the $c$-transform,
		\[
		\varphi^c \leq (\varphi-\tau 1_E)^c\leq \varphi^c+\tau.
		\]
		Moreover $(\varphi-\tau 1_E)^c >\varphi^c$ at $p\in B_y$, only when 
		\[
		\sup_{x\in E}( c(x,p;y)-\varphi(x) )\geq \varphi^c(p)-\tau.
		\]
		Hence
		\[
		\begin{split}
			\tau \nu (\{ p: \sup_{x\in E} ( c(x,p;y)-\varphi(x)  ) \geq \varphi^c(p)-\tau  \} )
			\geq  \int_{B_y} ((\varphi-\tau 1_E)^c-\varphi^c)  d\nu.
		\end{split}
		\]

		Since $\varphi$ is a minimiser of $\mathcal{F}_\mu$, plugging in $(\varphi-\tau 1_E)^{cc}\in \mathcal{P}_c$ as a competitor, whose $c$-transform is $(\varphi-\tau 1_E)^c$ by Lemma \ref{lem:Pc}, 
		we get
		\[
		\int_{B_y} ((\varphi-\tau 1_E)^c-\varphi^c)d\nu \geq - \int_{Sk(X)} ((\varphi-\tau 1_E)^{cc}- \varphi)d\mu \geq - \int_{Sk(X)} (\varphi-\tau 1_E)- \varphi=\tau \mu(E).
		\]
		Here the first inequality follows by the minimisation property of $\varphi$, and the second inequality uses Lemma \ref{lem:Pc}. Combining the above and cancelling the $\tau $ factor, 
		\[
		\nu( \{ p: \sup_{x\in E} ( c(x,p;y) -\varphi(x)  ) \geq \varphi^c(p)-\tau   \} ) \geq \mu(E), \quad \forall 0<\tau \ll 1. 
		\]
		Taking the $\tau \to 0$ limit, and recalling $\varphi^c(p)\geq c(x,p;y)-\varphi(x)$, we get
		\[
		\nu( \{ p: \sup_{x\in E} ( c(x,p;y)-\varphi(x)  ) = \varphi^c(p)   \} )  \geq \mu(E).
		\]

		For closed subsets $E$, the sup can be replaced by max by the continuity of $\varphi$, so the LHS subset is the conjugate set  $\bar{\nabla}\varphi(E)$. For general Borel subsets $E$, we take a compact exhaustion of $E$. For any compact subset $K\subset E$, we observe
		\[
		\nu(  \bar{\nabla}\varphi(E) )\geq \nu( \bar{\nabla} \varphi(K))  \geq \mu(K).
		\]
		Since the measure $\mu$ is absolutely continuous with respect to the Lebesgue measure, we obtain $\nu(\bar{\nabla}\varphi(E)) \geq\mu(E) $ by taking the limit $K\uparrow E$.

		The other inequality $\mu(	\bar{\nabla}\varphi (F))\geq \nu(F)$ is entirely similar.
	\end{proof}

	\begin{thm}\label{thm:globalrealMA}
		(Global real Monge-Amp\`ere type equation)
	For any Lebesgue measurable $E\subset Sk(X)$ (resp. $F\subset B_y$), we have
	\[
	\nu( 	\bar{\nabla}\varphi (E  ) )= \mu(E), \quad \mu(	\bar{\nabla}\varphi (F))= \nu(F).
	\]
	\end{thm}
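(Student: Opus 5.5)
Lemma \ref{lem:globalrealMA} already gives both inequalities $\nu(\bar{\nabla}\varphi(E))\geq\mu(E)$ and $\mu(\bar{\nabla}\varphi(F))\geq\nu(F)$. So the plan is to get the reverse inequalities by applying these to complements and showing that the conjugate sets of complementary sets overlap only in a null set. Concretely, for $E\subset Sk(X)$ we set $F= B_y\setminus \bar{\nabla}\varphi(E)$. By definition, no $p\in F$ is conjugate to any $x\in E$, so $\bar{\nabla}\varphi(F)\subset Sk(X)\setminus E$. The second inequality of Lemma \ref{lem:globalrealMA} then gives
\[
1-\nu(\bar{\nabla}\varphi(E))=\nu(F)\leq \mu(\bar{\nabla}\varphi(F))\leq \mu(Sk(X)\setminus E)=1-\mu(E),
\]
that is, $\nu(\bar{\nabla}\varphi(E))\geq \mu(E)$. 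This is only the inequality we already had, so the complement trick in this direction is circular. We must instead apply it to the complement of $E$ in $Sk(X)$ and show the overlap is null.

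Let $E^c=Sk(X)\setminus E$. Applying Lemma \ref{lem:globalrealMA} to $E$ and to $E^c$ gives
\[
\nu(\bar{\nabla}\varphi(E))+\nu(\bar{\nabla}\varphi(E^c))\geq \mu(E)+\mu(E^c)=1.
\]
By Lemma \ref{lem:ctransformmax}, every $p\in B_y$ has at least one conjugate $x$, so $\bar{\nabla}\varphi(E)\cup\bar{\nabla}\varphi(E^c)=B_y$. Hence the equality $\nu(\bar{\nabla}\varphi(E))=\mu(E)$ follows once we know
\[
\nu\big(\bar{\nabla}\varphi(E)\cap\bar{\nabla}\varphi(E^c)\big)=0.
\]
Symmetrically, $\mu(\bar{\nabla}\varphi(F))=\nu(F)$ reduces to $\mu(\bar{\nabla}\varphi(F)\cap\bar{\nabla}\varphi(F^c))=0$.

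The second null-overlap statement is the easier one, via Lemma \ref{lem:nullmeasure}. Suppose $x$ lies in both conjugate sets. If $x\in E_0'\cap(E_0\setminus E_1)$, the unique gradient $p_0=\nabla\varphi(x)$ lies in $\Delta_x^0$. The argument of Step 7 of Lemma \ref{lem:nullmeasure} (injectivity of $\Delta_x^0\to B_x$, Prop. \ref{prop:homeotoimage}, together with Lemma \ref{lem:changey}) then shows that $x$ has a unique conjugate $p\in B_y$. This $p$ cannot lie in both $F$ and $F^c$, a contradiction. So the overlap lies in $(Sk(X)\setminus E_0')\cup (Sk(X)\setminus E_0)\cup E_1$, which is $\mu$-null by absolute continuity and Lemma \ref{lem:nullmeasure}.

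The first null-overlap statement, on the $B_y$ side, is the main obstacle: we need $\nu$-almost every $p$ to have a unique conjugate $x$. If $p$ has two conjugates $x\neq x'$, then $x,x'$ lie outside $E_0\setminus E_1$ or lie in $E_2$. So the set of multiply-conjugate $p$ is contained in $\bar{\nabla}\varphi(N)$ for the $\mu$-null set
\[
N=(Sk(X)\setminus(E_0\cap E_0'))\cup E_1\cup E_2 .
\]
What we then need is $\nu(\bar{\nabla}\varphi(N))=0$ when $\mu(N)=0$, which Lemma \ref{lem:globalrealMA} does not give. The first thing to try is to use that $\nu$ is Lebesgue measure on $\Delta_y^0$ and to work in the region where $c(\cdot,p;y)$ is locally affine. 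By Prop. \ref{prop:limitingtropicalthetafromp2}, on a compact $\mathcal{K}\subset\Delta_y^0$ the functions $c(\cdot,p;y)$ are affine near $y$. One can then try to run the argument with the roles of $Sk(X)$ and $B_y$ swapped: $\varphi^c$ is convex and locally Lipschitz in $p$ on $\Delta_y^0$ (by Prop. \ref{prop:limitingtropicalthetafromp}, item 2), so it is differentiable $\nu$-a.e. One would aim to show that at a differentiability point $p$ the conjugate $x$ is unique, since distinct conjugates $x\neq x'$ should produce distinct subgradients of $\varphi^c$ at $p$. The difficulty is that $c(x,p;y)$ is only known to be convex in $p$, not affine, so this step needs the strongest use of the structure of $c$ and is the step I expect to be hardest. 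If it fails, the fallback is to seek a direct proof of the reverse inequality by perturbing $\varphi^c$ by $+\tau 1_F$ in the dual functional.
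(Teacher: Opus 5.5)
Your reduction to null overlaps is sound, and your treatment of the $F$-side is fine. In substance it is the paper's argument: off the $\mu$-null set $(Sk(X)\setminus(E_0\cap E_0'))\cup E_1\cup E_2$, Step 7 of Lemma \ref{lem:nullmeasure} gives a unique conjugate, so $\bar{\nabla}\varphi(F)\cap\bar{\nabla}\varphi(F^c)$ is $\mu$-null. You are also right to include $E_0'$ there.

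However, the first equality $\nu(\bar{\nabla}\varphi(E))=\mu(E)$ is not proved. You reduce it to $\nu(\bar{\nabla}\varphi(N))=0$, assert that Lemma \ref{lem:globalrealMA} does not give this, and then only sketch a route through $\nu$-a.e.\ differentiability of $\varphi^c$. That route needs a twist-type condition, namely injectivity of $x\mapsto\nabla_p c(x,p;y)$, and nothing in the paper provides it. The affine formula of Prop. \ref{prop:limitingtropicalthetafromp2} only holds for $x$ near $y$, not at the conjugate points. So the proposal has a hole at exactly the step you flag as the main obstacle.

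The missing idea is that no uniqueness statement on the $B_y$ side is needed. The second inequality of Lemma \ref{lem:globalrealMA} turns the required $\nu$-estimate into a $\mu$-estimate.

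\begin{itemize}
\item Your set $N$ contains every $x$ that shares a conjugate $p$ with some $x'\neq x$.
\item Hence $N$ is saturated: $\bar{\nabla}\varphi(\bar{\nabla}\varphi(N))\subset N$.
\item Apply $\mu(\bar{\nabla}\varphi(F))\geq\nu(F)$ with $F=\bar{\nabla}\varphi(N)$. This gives $\nu(\bar{\nabla}\varphi(N))\leq\mu(N)=0$, which closes your argument.
\end{itemize}

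The paper does the same thing more directly, with no complement argument. Set $E''=\bar{\nabla}\varphi(\bar{\nabla}\varphi(E))$. Every point has a conjugate, so $E\subset E''$. Moreover $E''\setminus E\subset(Sk(X)\setminus E_0)\cup E_1\cup E_2$. Therefore $\nu(\bar{\nabla}\varphi(E))\leq\mu(E'')=\mu(E)$.
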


	\begin{proof}
	Let $E''= \bar{\nabla}\varphi ( \bar{\nabla}\varphi (E)    )\subset Sk(X)$. 
	By the definition of conjugate sets, clearly $E\subset E''$. Moreover, for any point $x\in E''\setminus E$, there is some $p\in B_y$, which is conjugate to both $x$ and some $x'\in E$. Thus recalling the sets $E_0, E_1, E_2$ from Section \ref{sec:weakcomparison},
	\[
	E'' \setminus E\subset  (Sk(X)\setminus E_0)\cup E_1\cup E_2.
	\]
	Since $\mu$ is absolutely continuous, $\mu(Sk(X)\setminus E_0)=0$. By Lemma \ref{lem:nullmeasure}, $\mu(E_1)=\mu(E_2)=0$, hence $\mu(E''\setminus E)=0$. 
	But
	by Lemma \ref{lem:globalrealMA}, 
	\[
	\nu( 	\bar{\nabla}\varphi (E  ) )\geq \mu(E)= \mu( E''  )\geq \nu(  	\bar{\nabla}\varphi (E  )  ).
	\]
	This forces the equality $\nu( 	\bar{\nabla}\varphi (E  ) )= \mu(E)$.

	By Lemma \ref{lem:globalrealMA}, we have $ \mu(	\bar{\nabla}\varphi (F))\geq  \nu(F)$. Let 
	\[
	F'= 	\bar{\nabla}\varphi (F) \setminus  ( (Sk(X)\setminus E_0)\cup E_1\cup E_2 ),
	\]
	then $\mu(F')= \mu(   	\bar{\nabla}\varphi (F)    )$, since the deleted subset has zero measure. Any $x\in F'$ has a unique conjugate point $p\in B_y$, since it avoids $(Sk(X)\setminus E_0)\cup E_1\cup E_2$. But $F'\subset \bar{\nabla}\varphi (F) $ means that $x$ must be conjugate to some point in $F$, hence $p\in F$. This shows $\bar{\nabla}\varphi (F')\subset F$. By Lemma \ref{lem:globalrealMA} again, 
	\[
 \mu(   	\bar{\nabla}\varphi (F)    )=	\mu(F') \leq \nu(  \bar{\nabla}\varphi (F')     )\leq \nu(F) \leq  \mu(   	\bar{\nabla}\varphi (F)    ).
	\]
	This forces the equality $ \mu(   	\bar{\nabla}\varphi (F)    )= \nu(F)$. 
	\end{proof}

	\begin{rmk}
	For different choices of sufficiently irrational $y\in Sk(X)$, we can canonically identify $B_y$ using \ref{lem:changey}. This identification is compatible with the notion of conjugate points, hence compatible with the map $\bar{\nabla}\varphi$. 
	 Each $B_y$ carries a measure $\nu$ induced from the Lebsgue measure on $\Delta_y^0$. Since Thm. \ref{thm:globalrealMA} applies to all sufficiently irrational $y$ simultaneously, \emph{a fortiori} the measure $\nu$ is  independent of $y$ under the canonical identification of $B_y$.  We can thus think of $\bar{\nabla}\phi$ as a \emph{measure preserving map}  between two probability measure spaces $(Sk(X),\mu)$ and $(B, \nu)$ independent of $y$.
	
	It is a curious question if one can give an a priori construction of $(B,\nu)$ that is manifestly independent of the choice of the sufficiently irrational $y\in Sk(X)$. The proposal in \cite[Section 6.3]{Litheta} suggests that $(B,\nu)$ is the \emph{normalised Lebesgue measure on the essential skeleton corresponding to the mirror family of Calabi-Yau manifolds}. This would be a very appealing metric version of mirror symmetry.
	\end{rmk}


\begin{thebibliography}{7}
		
		
		
		
		
		


			
			
			
			
			
			\bibitem{Hultgren2}
			
			Rolf Andreasson, Jakob Hultgren.
			Solvability of Monge-Amp\`ere equations and tropical affine structures on reflexive polytopes.  	arXiv:2303.05276.
			
			
			
			
			\bibitem{BlumLiu}  Blum, Harold; Liu, Yuchen. Valuative independence for Calabi--Yau varieties. https://arxiv.org/abs/2604.27890
			
			
			
			
			\bibitem{Boucksomsemipositive} 
			Boucksom, S.; Favre, C.; Jonsson, M. Singular semipositive metrics in non-Archimedean geometry. J. Algebraic Geom. 25 (2016), no. 1, 77--139. 
			
			
			\bibitem{Boucksom1} 
			Boucksom, S.; Jonsson, M. Tropical and non-Archimedean limits of degenerating families of volume forms. J. Éc. polytech. Math. 4 (2017), 87--139. 
			
			
			
			
			
			
			
			\bibitem{Boucksom} 
			Boucksom, S.; Favre, C.; Jonsson, M. Solution to a non-Archimedean Monge-Ampère equation. J. Amer. Math. Soc. 28 (2015), no. 3, 617--667.
			
			
			\bibitem{Boucksomsurvey}
			Boucksom, S.; Favre, C.; Jonsson, M. The non-Archimedean Monge-Ampère equation. Nonarchimedean and tropical geometry, 31--49, Simons Symp., Springer, [Cham], 2016. 
			
			
			
			
			\bibitem{Boucksomnew1}  Boucksom, S.; Eriksson, D. Spaces of norms, determinant of cohomology and Fekete points in non-Archimedean geometry. 	
			Adv. Math. 378 (2021), Paper No. 107501, 124 pp. 
			
			
			
			\bibitem{BoucksomOkounkov}  Boucksom, Sébastien. Corps d'Okounkov (d'après Okounkov, Lazarsfeld-Mustaţǎ et Kaveh-Khovanskii). (French) [[Okounkov bodies (following Okounkov, Lazarsfeld-Mustaţǎ and Kaveh-Khovanskiĭ)]] Astérisque No. 361, (2014), Exp. No. 1059, vii, 1--41.
			
			
			
			
			
			
			
			
			
			
			
			\bibitem{CLDucros} 
			Chambert-Loir,  A.; Ducros, A. 
			Formes différentielles réelles et courants sur les espaces de Berkovich. 	arXiv:1204.6277.
			
			
			
			\bibitem{Demailly}
			Demailly, Jean-Pierre. Analytic methods in algebraic geometry. Surveys of Modern Mathematics, 1. International Press, Somerville, MA; Higher Education Press, Beijing, 2012. viii+231 pp. ISBN: 978-1-57146-234-3
			
			
			
			
			\bibitem{Travis}  M-W. Cheung, T. Magee, T. Mandel, G. Muller. Valuative independence and cluster theta reciprocity. https://arxiv.org/abs/2505.09585.
			
			
			
			
			
			
			
			
			\bibitem{EGZ} 
			Eyssidieux, P.; Guedj, V.; Zeriahi, A. Singular Kähler-Einstein metrics. J. Amer. Math. Soc. 22 (2009), no. 3, 607--639.
			
			
			
			
			\bibitem{Favre}
			Favre, C. 
			Degeneration of endomorphisms of the complex projective space in the hybrid space. (English summary)
			J. Inst. Math. Jussieu 19 (2020), no. 4, 1141–1183. 
			
			
			
			\bibitem{GotoOdaka}
			
			Keita Goto, Yuji Odaka. Special Lagrangian fibrations, Berkovich retraction, and crystallographic groups.  	arXiv:2206.14474.
			
			
			
			
	
			
			
			\bibitem{Gross} 
			Gross, M. Mirror symmetry and the Strominger-Yau-Zaslow conjecture. Current developments in mathematics 2012, 133--191, Int. Press, Somerville, MA, 2013. 
			
			
			
			
			
			\bibitem{Grosstheta1} 
			
			Gross, Mark; Siebert, Bernd. Theta functions and mirror symmetry. Surveys in differential geometry 2016. Advances in geometry and mathematical physics, 95--138, Surv. Differ. Geom., 21, Int. Press, Somerville, MA, 2016. 
			
			
			\bibitem{Grosstheta2} 
			Gross, Mark; Hacking, Paul; Siebert, Bernd. Theta functions on varieties with effective anti-canonical class. Mem. Amer. Math. Soc. 278 (2022), no. 1367, xii+103 pp. ISBN: 978-1-4704-5297-1; 978-1-4704-7167-5
			
			\bibitem{Grosstheta3} 
			Gross, Mark; Siebert, Bernd. The canonical wall structure and intrinsic mirror symmetry. Invent. Math. 229 (2022), no. 3, 1101--1202.
			
			
			
			
			
		\bibitem{GrossWilson}
		Gross, M.; Wilson, P. M. H. Large complex structure limits of $K3$ surfaces. J. Differential Geom. 55 (2000), no. 3, 475--546.
			
			

			
			
			
			
			
			
			\bibitem{Hultgren}
			
			
			
			
			Hultgren, Jakob; Jonsson, Mattias; Mazzon, Enrica; McCleerey, Nicholas. Tropical and non-Archimedean Monge-Ampère equations for a class of Calabi-Yau hypersurfaces. Adv. Math. 439 (2024), Paper No. 109494, 42 pp.
			
			
			
			
	\bibitem{HultgrenKhalid} J. Hultgren, M.S. Khalid, in preparation.
			
			
			\bibitem{LiFermat} 
			
			
			Li, Y. Strominger-Yau-Zaslow conjecture for Calabi-Yau hypersurfaces in the Fermat family. Acta Math. 229 (2022), no. 1, 1--53. 
			
			
			
			
			
			
			
			\bibitem{LiNA} Li, Y. Metric SYZ conjecture and non-Archimedean geometry. Duke Math. J. 172 (2023), no. 17, 3227--3255.
			
			

			
			
			
			\bibitem{LiFano}  Li, Y. Metric SYZ conjecture for certain toric Fano hypersurfaces. Camb. J. Math. 12 (2024), no. 1, 223--252.
			
			
			\bibitem{Liintermediate} 
			
			

	
Li, Y. Intermediate complex structure limit for Calabi-Yau metrics. Invent. Math. 240 (2025), no. 2, 459--496.


\bibitem{LiSYZsurvey}
			
Li, Y. Survey on the metric SYZ conjecture and non-Archimedean geometry. Internat. J. Modern Phys. A 37 (2022), no. 17, Paper No. 2230009, 44 pp.
			
			
			
					\bibitem{Litheta}
					
				Li, Y.	Degeneration of Calabi-Yau metrics and canonical basis. https://arxiv.org/abs/2505.11087
				
				
			
						\bibitem{NicaiseXu}
			Nicaise, J.; Xu, C. The essential skeleton of a degeneration of algebraic varieties. Amer. J. Math. 138 (2016), no. 6, 1645--1667.
			
			
			
			
			
			
			
			\bibitem{PilleSchneider}
			Pille-Schneider, L. Hybrid toric varieties and the non-archimedean SYZ fibration on Calabi-Yau hypersurfaces.  	arXiv:2210.05578. 
			
			
			\bibitem{PilleSchneider2}
			
			
			Pille-Schneider, Léonard. Global pluripotential theory on hybrid spaces. J. Éc. polytech. Math. 10 (2023), 601--658.
			
			
			
			
			
			
			\bibitem{SYZ} 
			Strominger, A.; Yau, S-T.; Zaslow, E. Mirror symmetry is $T$-duality.
			Nucl.Phys.B479:243-259,1996.
			
			
			\bibitem{Vilsmeier} 
			

			Vilsmeier, Christian. A comparison of the real and non-archimedean Monge-Ampère operator. Math. Z. 297 (2021), no. 1-2, 633--668. 
			
			
			
			\bibitem{WittNystrom} 
			Witt Nyström, David. Transforming metrics on a line bundle to the Okounkov body. Ann. Sci. Éc. Norm. Supér. (4) 47 (2014), no. 6, 1111--1161.
			
			
			
			
			
			
		
			
		
		
	\end{thebibliography}
\end{document}